\documentclass[11pt,a4paper]{article}
\usepackage[utf8]{inputenc}
\usepackage[T1]{fontenc}
\usepackage{amsmath}
\usepackage{amsfonts}
\usepackage{amssymb}
\usepackage{amsthm}
\usepackage{bm}
\usepackage{enumitem}
\theoremstyle{plain}
\newtheorem{theorem}{Theorem}
\newtheorem{assumption}[theorem]{Assumption}

\newtheorem{proposition}[theorem]{Proposition}
\newtheorem{definition}[theorem]{Definition}
\newtheorem{lemma}[theorem]{Lemma}
\newtheorem{remark}[theorem]{Remark}
\newtheorem*{remarque*}{Remark}

\newcommand{\dr}{\mathrm{d}}
\newcommand{\sca}{\mathfrak{s}}
\newcommand{\crule}[3][c]{%
    \par\noindent
    \makebox[\linewidth][#1]{\rule{#2\linewidth}{#3}}}

\usepackage[left=2.5cm,right=2.5cm,top=2cm,bottom=2cm]{geometry}
\usepackage{graphicx}
\usepackage{subcaption}
\usepackage{float}
\usepackage{verbatim}
\usepackage[colorlinks=true,
            linkcolor=black,
            urlcolor=black,
            citecolor=black]{hyperref}
\usepackage{cite}
\usepackage{dsfont}
\usepackage[multiple]{footmisc}

\title{\bf{Stable limit theorems for additive functionals of one-dimensional diffusion processes}}
\author{Loïc Béthencourt\thanks{Sorbonne Université, CNRS, Laboratoire de Probabilités, Statistique et Modélisation, F-75005 Paris, France. Email: loic.bethencourt@sorbonne-universite.fr.}\thanks{Acknowledgements: I would like to thank Nicolas Fournier for all the fruitful discussions and advice.}}
\date{}

\begin{document}

\maketitle
\begin{abstract}
    We consider a positive recurrent one-dimensional diffusion process with continuous coefficients and we establish stable central limit theorems for a certain type of additive functionals of this diffusion. In other words we find some explicit conditions on the additive functional so that its fluctuations behave like some $\alpha$-stable process in large time for $\alpha\in(0,2]$.
\end{abstract}

\crule{.75}{0.5pt}
\noindent
2010 \textit{Mathematics Subject Classification:} 60J60, 60F05.

\noindent
\textit{Keywords and phrases: One-dimensional diffusion processes, Stable central limit theorem, Stable processes, Local times.}

\section{Introduction and main result}

Consider a one-dimensional diffusion process  with continuous coefficients $b$ and $\sigma$, i.e. a continuous adapted process $(X_t)_{t\geq0}$ satisfying the SDE
\begin{equation}\label{SDE}
    X_t = \int_0^t b(X_s)\dr s + \int_0^t\sigma(X_s)\dr B_s,
\end{equation}

\noindent
where $(B_t)_{t\geq0}$ is a Brownian motion. Without loss of generality, we assume $X_0 = 0$, even if it means changing $b$ and $\sigma$. If $\sigma$ does not vanish, then weak existence and uniqueness in law hold for \eqref{SDE}, see Kallenberg \cite[Chapter 23 Theorem 23.1]{kallenberg2006foundations}. 

\begin{assumption}\label{assump_b_sigma}
The functions $b,\sigma:\mathbb{R}\to\mathbb{R}$ are continuous and for all $x\in\mathbb{R}$, $\sigma(x) > 0$. Moreover $b$ and $\sigma$ are such that the process $(X_t)_{t\geq0}$ is positive recurrent in the sense of Harris.
\end{assumption}

\noindent
We recall that a strong Markov process $(\Omega, \mathcal{F}, (\mathcal{F}_t)_{t\geq0}, (X_t)_{t\geq0}, (\mathbb{P}_x)_{x\in\mathbb{R}})$ valued in $(\mathbb{R}, \mathcal{B}(\mathbb{R}))$ is said to be recurrent in the sense of Harris if it has a $\sigma$-finite invariant measure $\mu$ such that for all $A\in\mathcal{B}(\mathbb{R})$
$$\mu(A) > 0 \quad \text{implies} \quad \limsup_{t\to\infty}\bm{1}_A(X_t) = 1, \qquad\mathbb{P}_x-\text{almost surely for all }x\in\mathbb{R}.$$
\noindent
The process is then said to be positive recurrent if $\mu(\mathbb{R}) < \infty$ and null recurrent otherwise. We introduce the scale function $\sca$ of $(X_t)_{t\geq0}$ defined by
\begin{equation}\label{scale_function}
    \sca(x) = \int_0^x \exp\left(-2\int_0^v b(u)\sigma^{-2}(u)\dr u\right)\dr v,
\end{equation}
which is a $C^2$, strictly increasing function solving $\frac{1}{2}s''\sigma^2 + s'b = 0$. We also introduce the speed measure density
\begin{equation}\label{speed_measure}
    m(x) =  \sigma^{-2}(x)\exp\left(2\int_0^x b(v)\sigma^{-2}(v)\dr v\right) = [\sigma^{2}(x)\sca'(x)]^{-1}
\end{equation}

\begin{remark}\label{equivalence_harris}
Assume $b$ and $\sigma$ are continuous and $\sigma$ does not vanish. Then we have the equivalence between the two following propositions, see for instance Revuz-Yor \cite[Chapter VII]{revuz2013continuous}.

\begin{enumerate}
    \item $(X_t)_{t\geq0}$ is positive recurrent in the sense of Harris.
    \item $\lim_{x\to\pm\infty}\sca(x) = \pm\infty$ and $\kappa^{-1}:= \int_{\mathbb{R}}m(x)\dr x < \infty$.
\end{enumerate}

\noindent
Moreover, in this case, the measure $\mu(\dr x) = \kappa m(x)\dr x$ is the unique invariant probability measure for the process $(X_t)_{t\geq0}$.

\end{remark}

The ergodic theorem for Harris recurrent processes, see Azema-Duflo-Revuz \cite{azema1969mesure} or Revuz-Yor \cite[Chapter X]{revuz2013continuous}, tells us that for $f\in\mathrm{\bm{L}}^1(\mu)$, a.s.
\begin{equation}\label{ergodic_theorem}
    \frac{1}{t}\int_0^t f(X_s)\dr s \underset{t\to\infty}{\longrightarrow} \mu(f),
\end{equation}

\noindent
where $\mu(f) = \int_{\mathbb{R}} f\dr \mu$. The convergence in \eqref{ergodic_theorem} can be seen as a strong law of large numbers for the additive functional $\int_0^t f(X_s)\dr s$ of the process $(X_t)_{t\geq0}$. Then it is very natural to study its fluctuations, i.e. to describe the asymptotic behavior of $\frac{1}{t}\int_0^t (f(X_s) - \mu(f))\dr s$. In this paper, we give simple conditions on $f$ for these asymptotic fluctuations to be $\alpha$-stable for some $\alpha\in(0,2]$. The conditions on $f$ are entirely prescribed by the coefficients $b$ and $\sigma$.

\medskip
We recall that $\ell:[0,\infty)\mapsto(0,\infty)$ is said to be slowly varying if for every $\lambda>0$, $\ell(\lambda x) / \ell(x) \to 1$ as $x$ goes to infinity. The use of such functions is justified by the fact that domains of attraction of stable laws involve slowly varying functions.

\begin{assumption}\label{assump_f}
The function $f: \mathbb{R}\mapsto\mathbb{R}$ is locally bounded and Borel. Moreover, there exists $\alpha>0$, $(f_+, f_-)\in\mathbb{R}^2$ and a continuous slowly varying function $\ell:[0,\infty)\mapsto(0,\infty)$ such that
$$[\sigma(x)\sca'(x)]^{-2}|\sca(x)|^{2-1/\alpha}\ell(|\sca(x)|)f(x) \underset{x\to\pm\infty}{\longrightarrow}f_{\pm},$$

\noindent
and $|f_+| + |f_-| > 0$. If $f\in \mathrm{\bm{L}}^1(\mu)$, we impose $\mu(f) = 0$. Finally, if $\alpha \geq2$, we impose $f$ to be continuous.
\end{assumption}

This assumption appears naturally in the computations. We refer to Section \ref{section_applications} for a collection of concrete applications and examples. In the critical stable regime $\alpha = 1$, we will add a mild assumption which ensures that the set $\mathrm{\bm{L}}^1(\mu)$ is big enough.

\begin{assumption}\label{assump_mu_integ}
There exists $\lambda > 0$ such that $|\sca|^{\lambda}\in \mathrm{\bm{L}}^1(\mu)$.
\end{assumption}

Under Assumption \ref{assump_f}, we set $\rho = \int_1^{\infty}\left(\int_x^{\infty}\frac{\dr v}{v^{3/2}\ell(v)}\right)^2\dr x$ which can be infinite, and we define the diffusive constant as follows:

\begin{itemize}[leftmargin=*]
    \item If $\alpha > 2$ or $\alpha = 2$ and $\rho < \infty$, $\sigma_{\alpha}^2 = 4\kappa\int_{\mathbb{R}}\sca'(x)\left(\int_x^{\infty}f(v)[\sigma^{2}(v)\sca'(v)]^{-1}\dr v\right)^2 \dr x $.
    
    \item If $\alpha = 2$ and $\rho = \infty$, $\sigma_{2}^2 = 4\kappa(f_+^2 + f_-^2)$.
    
    \item If $\alpha\in(0,2)$, $\sigma_{\alpha}^{\alpha} = \frac{\kappa2^{\alpha-2}\pi}{\alpha\sin(\alpha\pi/2)}\left(\frac{\alpha^{\alpha}}{\Gamma(\alpha)}\right)^2 (|f_+|^{\alpha} + |f_-|^{\alpha})$.
\end{itemize}

\noindent
We also introduce, for $\xi\in\mathbb{R}$, the complex numbers

\begin{itemize}[leftmargin=*]
    \item $z_{\alpha}(\xi) = 1-i\frac{\mathrm{sgn}(f_+)|f_+|^{\alpha} + \mathrm{sgn}(f_-)|f_-|^{\alpha}}{|f_+|^{\alpha} + |f_+|^{\alpha}}\tan\left(\frac{\alpha\pi}{2}\right) \mathrm{sgn}(\xi)$ if $\alpha\in(0,2)\setminus\{1\}$.
    
    \item $z_1(\xi) = 1+i\frac{f_+ + f_-}{|f_+| + |f_-|}\frac{2}{\pi}\mathrm{sgn}(\xi)\left[\log\left(\frac{\pi|\xi|}{2(|f_+| + |f_-|)}\right) + 2\gamma + \log(2) + \frac{\log|f_+|f_+ + \log|f_-|f_- }{f_+ + f_-}\right]$, where $\gamma$ is the Euler constant. 
\end{itemize}

Finally, for a family of processes $((Y_t^{\epsilon})_{t\geq0})_{\epsilon > 0}$ valued in $\mathbb{R}$, we say that $(Y_t^{\epsilon})_{t\geq0} \overset{f.d.}{\longrightarrow} (Y_t^0)_{t\geq0}$ if for all $n\geq1$ and all $0<t_1 < \cdots <t_n$, the vector $(Y_{t_i}^{\epsilon})_{1\leq i\leq n}$ converges in law to $(Y_{t_i}^0)_{1\leq i\leq n}$ in $\mathbb{R}^n$. We are now ready to state our main theorem, which concerns positive recurrent diffusions.

\begin{theorem}\label{main_theorem}
Suppose Assumptions \ref{assump_b_sigma} and \ref{assump_f}. Let $(X_t)_{t\geq0}$ be a solution of \eqref{SDE}, $(W_t)_{t\geq0}$ a Brownian motion and $(S_t^{(\alpha)})_{t\geq0}$ an $\alpha$-stable process such that $\mathbb{E}[\exp(i\xi S_t^{(\alpha)})] = \exp(-t|\xi|^{\alpha}z_{\alpha}(\xi))$.

\begin{enumerate}[label=(\roman*)]
    \item If $\alpha > 2$, or $\alpha = 2$ and $\rho<\infty$,
    $$\left(\epsilon^{1/2}\int_0^{t/\epsilon}f(X_s)\dr s\right)_{t\geq0} \overset{f.d.}{\longrightarrow} \left(\sigma_{\alpha}W_t\right)_{t\geq0} \qquad\hbox{as $\epsilon\to0$}.$$

    \item If $\alpha = 2$ and $\rho=\infty$, we set $\rho_{\epsilon} = \int_1^{1/\epsilon}\left(\int_x^{\infty}\frac{\dr v}{v^{3/2}\ell(v)}\right)^2\dr x$ and we have
    $$\left(|\epsilon/\rho_{\epsilon}|^{1/2}\int_0^{t/\epsilon}f(X_s)\dr s\right)_{t\geq0} \overset{f.d.}{\longrightarrow}\left(\sigma_{2}W_t\right)_{t\geq0}\qquad\hbox{as $\epsilon\to0$}.$$

    \item If $\alpha\in(0,2)\setminus\{1\}$,
    $$\left(\epsilon^{1/\alpha}\ell(1/\epsilon)\int_0^{t/\epsilon}f(X_s)\dr s\right)_{t\geq0} \overset{f.d.}{\longrightarrow} \left(\sigma_{\alpha}S_t^{(\alpha)}\right)_{t\geq0} \qquad\hbox{as $\epsilon\to0$}.$$
    
    \item If $\alpha=1$ and Assumption \ref{assump_mu_integ} holds, there exists a deterministic family $(\xi_{\epsilon})_{\epsilon >0}$ of real numbers such that
    $$\left(\epsilon \ell(1/\epsilon) \int_0^{t/\epsilon}f(X_s)\dr s - \xi_{\epsilon} t\right)_{t\geq0} \overset{f.d.}{\longrightarrow} \left(\sigma_{1}S_t^{(1)}\right)_{t\geq0} \qquad\hbox{as $\epsilon\to0$}.$$
    
    \noindent
     Moreover $\xi_{\epsilon} \underset{\epsilon\to0}{\sim}\kappa(f_+ + f_-)\ell(1/\epsilon)\zeta_{\epsilon}$, where  $\zeta_{\epsilon} =-\int_{1/\epsilon}^{\infty}\frac{\dr x}{x\ell(x)}$ if $f\in \mathrm{\bm{L}}^1(\mu)$ and  $\zeta_{\epsilon} =\int_{1}^{1/\epsilon}\frac{\dr x}{x\ell(x)}$ otherwise.
\end{enumerate}
\end{theorem}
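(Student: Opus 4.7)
The plan is to reduce the additive functional to one of a standard Brownian motion via the scale function and Dambis-Dubins-Schwarz, and then to analyze the resulting functional through Itô's excursion theory. Set $Y_t = \sca(X_t)$; by Itô's formula and the ODE satisfied by $\sca$, $Y$ is a continuous local martingale with $\dr Y_t = (\sca'\sigma)(X_t)\,\dr B_t$, so $Y_t = \beta_{A_t}$ for a Brownian motion $\beta$ and clock $A_t = \int_0^t[\sca'(X_s)\sigma(X_s)]^2\,\dr s$. Changing variable $u=A_s$ yields
\[
\int_0^t f(X_s)\,\dr s = \int_0^{A_t} F(\beta_u)\,\dr u, \qquad T_u := A^{-1}(u) = \int_0^u G(\beta_v)\,\dr v,
\]
with $F(y) = f(\sca^{-1}(y))[\sca'\sigma]^{-2}(\sca^{-1}(y))$ and $G(y) = [\sca'\sigma]^{-2}(\sca^{-1}(y))$. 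Assumption~\ref{assump_f} then translates into $F(y)\sim f_\pm/(|y|^{2-1/\alpha}\ell(|y|))$ as $y\to\pm\infty$, and the substitution $y=\sca(x)$ gives $\|G\|_1 = \int m(x)\,\dr x = \kappa^{-1}$, in particular $G\in\mathrm{\bm{L}}^1$.

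The heart of the proof is the joint asymptotics of $(J_u,T_u):= \left(\int_0^u F(\beta_v)\,\dr v,\int_0^u G(\beta_v)\,\dr v\right)$, which I would carry out through Itô's excursion theory of $\beta$ at $0$. Writing $\tau(s)$ for the inverse of the local time $L^\beta$ at $0$, both $J_{\tau(s)}$ and $T_{\tau(s)}$ are Lévy processes in $s$ whose Lévy measures are the pushforwards of Itô's excursion measure $\mathbf{n}$ under $e\mapsto\int_0^{\ell(e)}F(e)$ and $e\mapsto\int_0^{\ell(e)}G(e)$. Brownian scaling (an excursion of duration $\ell$ has supremum $\asymp\sqrt{\ell}$) together with $\mathbf{n}(\ell>L)\asymp L^{-1/2}$ produces a two-sided tail for the first pushforward of order $x^{-\alpha}$ up to a slowly varying correction controlled by $\ell$, with asymmetry driven by the signs of $f_\pm$---exactly the $\alpha$-stable Lévy-measure tail that reproduces $\sigma_\alpha^\alpha$ and $z_\alpha$. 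For the second pushforward, $\mathbf{n}[\int G(e)]$ is proportional to $\|G\|_1=\kappa^{-1}$, so the law of large numbers for subordinators yields $T_{\tau(s)}/s\to\kappa^{-1}$ a.s. Putting these together via the usual stable/Gaussian limit theorems for Lévy processes identifies the joint limit of $(J_{\tau(s)},T_{\tau(s)})$ in each of the four regimes.

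Finally, the time change is inverted: $T_{\tau(s)}\sim\kappa^{-1}s$ implies $L^\beta_{A_t}\sim\kappa t$, hence $A_t\approx\tau(\kappa t)$, and therefore $\int_0^t f(X_s)\,\dr s = J_{A_t}\approx J_{\tau(\kappa t)}$; the self-similarity of the limiting Lévy process then produces the rates $\epsilon^{1/2}$, $(\epsilon/\rho_\epsilon)^{1/2}$ and $\epsilon^{1/\alpha}\ell(1/\epsilon)$ of (i)--(iv). As a sanity check for the Gaussian regime, the Poisson equation $\mathcal{L}g = f$ is solved by $g'(x) = -2\sca'(x)\int_x^\infty fm$ and the martingale CLT applied to $M_t = g(X_t)-g(X_0)-\int_0^t f(X_s)\,\dr s$ gives $\mu((g'\sigma)^2) = 4\kappa\int \sca'\bigl(\int_\cdot^\infty fm\bigr)^2 = \sigma_\alpha^2$, matching the stated formula. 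The main obstacle is the sharp two-sided tail analysis of the excursion measure, which must reproduce exactly the coefficients $|f_+|^\alpha + |f_-|^\alpha$ and the asymmetric skewness in $z_\alpha$; in the critical case $\alpha=1$ an additional logarithmic centering $\xi_\epsilon$ is unavoidable because the $1$-stable Lévy-Khintchine representation does not center itself, and the two forms of $\zeta_\epsilon$ correspond to whether one centers large excursion-values at $\mu(f)=0$ (when $f\in\mathrm{\bm{L}}^1(\mu)$, thanks to Assumption~\ref{assump_mu_integ}) or at the partial Lévy integral otherwise.
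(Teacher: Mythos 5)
Your overall architecture tracks the paper's at a high level: reduce to a Brownian motion via the scale function and a time change, observe that the functional evaluated along the inverse local time is a L\'evy process, identify a stable limit, invert the time change using $\|G\|_1=\kappa^{-1}$, and fall back on the Poisson equation plus the martingale CLT in the Gaussian regime (your variance check $\mu((g'\sigma)^2)=4\kappa\int \sca'(\int_\cdot^\infty fm)^2$ is correct and matches $\sigma_\alpha^2$). Where you genuinely diverge is in how the stable limit is identified. The paper does \emph{not} compute the L\'evy measure of $J_{\tau(s)}$ via excursion theory. Instead it rescales space by $a_\epsilon=\epsilon/\kappa$ so that the time change $A^\epsilon$ converges a.s.\ uniformly to $L^0$, proves a.s.\ locally uniform convergence of the rescaled functional to an \emph{explicit} local-time integral $K_t=\int \mathrm{sgn}_{f_+,f_-}(x)|x|^{1/\alpha-2}(L_t^x-L_t^0)\,\dr x$, and then identifies the law of $K_{\tau_t}$ by soft arguments: Brownian scaling shows it is strictly stable, the decomposition $K_{\tau_t}=f_+C_t+f_-D_t$ into independent, identically distributed, spectrally positive one-sided pieces pins the skewness of each piece to $\beta=1$, and the known Biane--Yor formula for the symmetric combination $C_t-D_t$ delivers the exact constant $c_\alpha'=c_\alpha/2$. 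This is precisely the device that lets the paper avoid the hard computation your plan runs into.

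That computation is the genuine gap in your proposal. You correctly note that the L\'evy measure of $J_{\tau(s)}$ is the pushforward of the excursion measure $\mathbf{n}$ under $e\mapsto\int_0^{\zeta(e)}F(e_u)\,\dr u$, and the heuristic $\mathbf{n}\bigl(\int F(e)>x\bigr)\asymp x^{-\alpha}$ is right, but the theorem requires the \emph{exact} tail constants $\lambda_\alpha(|f_+|^\alpha+|f_-|^\alpha)$ and the skewness entering $z_\alpha(\xi)$, and for $\alpha=1$ the exact centering $\tau_{a,b}$ involving the Euler constant. Deriving these directly from $\mathbf{n}$ is essentially redoing Biane--Yor from scratch, and you flag it as "the main obstacle" without resolving it; moreover the slowly varying factor $\ell(|e_u|)$ inside the excursion integral does not simply scale out --- the paper needs Potter's bound and a uniform-convergence argument (its Lemma on $\epsilon^{1/\alpha-2}\ell(1/\epsilon)\phi(\kappa x/\epsilon)$) even for the softer a.s.\ convergence statement. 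Two further points are glossed over but are fixable: for $\alpha\in(1,2)$ your $J_{\tau(s)}$ is a finite-variation L\'evy process whose rescaled limit has infinite variation, so the small jumps must be compensated, and the compensation vanishes only because $\mathbf{n}[\int F(e)]=\int F=\kappa^{-1}\mu(f)=0$; and the substitution $J_{A_{t/\epsilon}}\approx J_{\tau(\kappa t)}$ must be justified at fixed $t$ using $\mathbb{P}(\tau_{t-}<\tau_t)=0$ together with continuity of $J$ in real (not local) time, as the paper does. If you import the paper's independence/spectral-positivity/Biane--Yor trick to nail the constants, your excursion route closes; as written, the central quantitative step is asserted rather than proved.
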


For the reader more familiar with infinitesimal generators than with characteristic functions, let us mention the following remark.

\begin{remark}
When $\alpha\in (0,2)$, the process $(\sigma_{\alpha}S_t^{(\alpha)})_{t\geq0}$ of Theorem \ref{main_theorem} is a L\'evy process with L\'evy measure
$$\nu(\dr x) = \left(c_+x^{-1-\alpha}\bm{1}_{\{x > 0\}} + c_-|x|^{-1-\alpha}\bm{1}_{\{x < 0\}}\right)\dr x,$$

\noindent
where
$$c_+ =\lambda_{\alpha}\left[\bm{1}_{\{f_+>0\}}|f_+|^{\alpha} + \bm{1}_{\{f_->0\}}|f_-|^{\alpha}\right],\quad c_- =\lambda_{\alpha}\left[\bm{1}_{\{f_+<0\}}|f_+|^{\alpha} + \bm{1}_{\{f_-<0\}}|f_-|^{\alpha}\right]$$

\noindent
and $\lambda_{\alpha} = \frac{\kappa2^{\alpha-2}\pi}{\alpha\sin(\alpha\pi/2)}\left(\frac{\alpha^{\alpha}}{\Gamma(\alpha)}\right)^2$. Its infinitesimal generator $\mathcal{L}^{(\alpha)}$ is such that for all $\phi\in C^{\infty}_c(\mathbb{R})$,

\begin{itemize}[leftmargin=*]
    \item $\mathcal{L}^{(\alpha)}\phi(x) = \int_{\mathbb{R}\setminus\{0\}}[\phi(x+z) - \phi(x)]\nu(dz)$ if $\alpha\in(0,1)$,
    
    \item $\mathcal{L}^{(\alpha)}\phi(x) = \int_{\mathbb{R}\setminus\{0\}}[\phi(x+z) - \phi(x) - z\phi'(x)]\nu(dz)$ if $\alpha\in(1,2)$,
    
    \item if $\alpha=1$, $\mathcal{L}^{(\alpha)}\phi(x) = a\phi'(x) + \int_{\mathbb{R}\setminus\{0\}}[\phi(x+z) - \phi(x) - z\bm{1}_{\{|z|\leq1\}}\phi'(x)]\nu(dz)$, where
    $$a = -(f_+ + f_-)\left[ 2\gamma + \log(2) + \kappa\log(\kappa) + \frac{\log|f_+|f_+ + \log|f_-|f_- }{f_+ + f_-} + \kappa\frac{\pi}{2}A\right],$$
    \noindent
    $\gamma$ is the Euler constant and $A = \int_{0}^{1}\frac{\sin(x) - x}{x^2}dx + \int_{1}^{\infty}\frac{\sin(x)}{x^2}dx$.
\end{itemize}
\noindent
One can refer to Bertoin \cite[page 24]{bertoin1996levy} to understand how to go from the L\'evy-Khintchine triplet to the generator and to Sato \cite[Chapter 3.14]{ken1999levy} to go from the L\'evy-Khintchine triplet to the characteristic function.
\end{remark}

Observe that in the L\'evy regime $\alpha \in (0,2)$, we have a convergence in finite dimensional distribution of a continuous process towards a discontinuous process (stable processes possess many jumps). Hence we cannot hope to obtain convergence in law as a process, for example for the usual Skorokhod distance. Observe also that if $f_- = 0$, the stable process has only positive jumps. Thus the limiting process is completely asymmetric although the process $(f(X_t))_{t\geq0}$ may visit the whole space infinitely often.

\medskip
When $f\in\mathrm{\bm{L}}^1(\mu)$ and $\mu(f)\neq0$, we can use Theorem \ref{main_theorem} with the function $f-\mu(f)$ provided it satisfies Assumption \ref{assump_f}.

\medskip
When $\alpha\geq2$, the stable process obtained in the limit is a Brownian motion, which is the only $2$-stable process. A standard strategy to show central limit theorems for the additive functional $\int_0^t f(X_s)\dr s$ is to solve the Poisson equation  $\mathcal{L}g = f$, where $\mathcal{L}$ is the infinitesimal generator of $(X_t)_{t\geq0}$ with domain $D_{\mathcal{L}}$. One can refer to Jacod-Shiryaev \cite[Chapter VIII]{jacod2013limit}, Pardoux-Veretennikov \cite{pardoux} or Cattiaux, Chafaï and Guillin \cite{cattiaux2011central}. Assume that $f$ is a continuous function in $\mathrm{L}^1(\mu)$ and define the function
\begin{equation}\label{solution_Poisson}
    g(x) = 2\int_0^x \sca'(v)\int_v^{\infty}f(u)[\sigma^{2}(u)\sca'(u)]^{-1}\dr u\dr v.
\end{equation}

\noindent
Then $g$ is a $C^2$ function solving the Poisson equation $2bg' + \sigma^2 g'' = -2f$. Hence, applying the Itô formula with $g$, we express the additive functional as a martingale plus some remainder, and hope to obtain the result using a central limit theorem for martingales. This will be the strategy used when $\alpha \geq 2$, and we will actually prove the following more general result. Note that, for $g$ to be well defined, it suffices that $f\in\mathrm{L}^1(\mu)$ since $\mu(\dr x) = \kappa[\sigma^{2}(u)\sca'(u)]^{-1} \dr x$, whence $\int_v^{\infty}f(u)[\sigma^{2}(u)\sca'(u)]^{-1}\dr u = \kappa^{-1}\mu(f\bm{1}_{(v,\infty)})$.

\begin{theorem}\label{general_theorem}
Suppose Assumption \ref{assump_b_sigma} and let $(X_t)_{t\geq0}$ be the solution of \eqref{SDE}. Let also $f$ be a continuous function in $\mathrm{\bm{L}}^1(\mu)$ such that $\mu(f) = 0$ and $g$ be defined by \eqref{solution_Poisson}. If $g'\sigma\in \mathrm{\bm{L}}^2(\mu)$, then
$$\left(\epsilon^{1/2}\int_0^{t/\epsilon}f(X_s)\dr s \right)_{t\geq0} \overset{f.d.}{\longrightarrow} \left(\gamma W_t\right)_{t\geq0} \qquad\hbox{as $\epsilon\to0$},$$

\noindent
where $(W_t)_{t\geq0}$ is a Brownian motion and $\gamma^2 = \int_{\mathbb{R}}[g'\sigma]^2 \dr \mu$. Moreover, if $g$ is bounded, the finite dimensional convergence can be replaced by a convergence in law of continuous processes, with the topology of uniform convergence on compact time intervals.
\end{theorem}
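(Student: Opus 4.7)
The plan is to reduce the statement to a functional central limit theorem for the continuous martingale arising from a Poisson equation associated with $f$. First, I would verify by direct differentiation that $g$, as defined in \eqref{solution_Poisson}, is of class $C^2$ and solves the Poisson equation $\mathcal{L}g = -f$, where $\mathcal{L}\phi := \frac{1}{2}\sigma^2\phi'' + b\phi'$ is the infinitesimal generator of $(X_t)_{t\geq 0}$. The identity $\frac{1}{2}\sca''\sigma^2 + \sca'b = 0$ makes the computation essentially immediate, and the assumptions $f\in \bm{L}^1(\mu)$, $\mu(f) = 0$ ensure that the inner integral $\int_v^{\infty}f(u)[\sigma^2(u)\sca'(u)]^{-1}\dr u = \kappa^{-1}\int_v^\infty f\,\dr\mu$ is well-defined and vanishes at $\pm\infty$. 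Note in passing that $g(0) = 0$.

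Next, I would apply Itô's formula to $g(X_t)$, which together with $\mathcal{L}g = -f$ yields the key decomposition
$$\int_0^t f(X_s)\,\dr s = M_t - g(X_t), \qquad M_t = \int_0^t g'(X_s)\sigma(X_s)\,\dr B_s.$$
The process $M$ is a continuous local martingale with $\langle M\rangle_t = \int_0^t (g'\sigma)^2(X_s)\,\dr s$. Since $g'\sigma \in \bm{L}^2(\mu)$, the ergodic theorem \eqref{ergodic_theorem} yields $t^{-1}\langle M\rangle_t \to \gamma^2$ almost surely, so $M$ is in fact a true $L^2$ martingale. Setting $M_t^\epsilon := \sqrt{\epsilon}\,M_{t/\epsilon}$, one has $\langle M^\epsilon\rangle_t = \epsilon \int_0^{t/\epsilon}(g'\sigma)^2(X_s)\,\dr s \to \gamma^2 t$ a.s.\ for each fixed $t \geq 0$. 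Rebolledo's functional CLT for continuous martingales (see Jacod--Shiryaev \cite[Ch.~VIII]{jacod2013limit}) then gives $(M^\epsilon_t)_{t\geq 0} \to (\gamma W_t)_{t\geq 0}$ in law for the topology of uniform convergence on compacts.

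It remains to show that the boundary contribution $\sqrt{\epsilon}\,g(X_{t/\epsilon})$ is negligible. When $g$ is bounded, the estimate $\sup_{0\leq t\leq T}\sqrt{\epsilon}\,|g(X_{t/\epsilon})| \leq \sqrt{\epsilon}\,\|g\|_\infty \to 0$ is trivial and Slutsky upgrades the previous step to convergence in law of continuous processes for the uniform topology, which is precisely the enhanced conclusion. For the general statement, where only finite-dimensional convergence is claimed, I would use the fact that for a positive Harris recurrent one-dimensional diffusion the marginal law of $X_t$ converges weakly to $\mu$; together with the continuity of $g$, this makes $\{g(X_{t/\epsilon})\}_{\epsilon > 0}$ tight for each fixed $t>0$, so $\sqrt{\epsilon}\,g(X_{t/\epsilon}) \to 0$ in probability, and Slutsky closes the argument.

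The main technical subtlety I anticipate is that the ergodic convergence of $\epsilon\langle M\rangle_{t/\epsilon}$ to $\gamma^2 t$ is only pointwise a.s., whereas Rebolledo's theorem requires uniform-on-compacts convergence in probability. This is resolved at low cost: since $t\mapsto \epsilon\langle M\rangle_{t/\epsilon}$ is non-decreasing and the limit $t\mapsto \gamma^2 t$ is continuous and monotone, the pointwise a.s.\ convergence upgrades to uniform convergence on compact intervals by a Dini-type argument. Aside from this and the weak convergence of the marginals used for the unbounded boundary term, every step above is routine martingale-CLT machinery.
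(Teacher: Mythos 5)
Your proposal is correct and follows essentially the same route as the paper: the Itô/Poisson-equation decomposition $\int_0^t f(X_s)\,\dr s = M_t - g(X_t)$, the martingale functional CLT from Jacod--Shiryaev applied via convergence of the brackets $\epsilon\langle M\rangle_{t/\epsilon}\to\gamma^2 t$, and the same treatment of the remainder (trivial when $g$ is bounded, convergence in law of $X_{t/\epsilon}$ to $\mu$ plus continuity of $g$ otherwise). The only cosmetic differences are that the paper obtains the bracket convergence in probability from its self-contained Remark \ref{ergodic_version} rather than the a.s.\ ergodic theorem, and your aside that $M$ is a true $L^2$ martingale is neither justified as stated nor needed, since the CLT applies to continuous local martingales.
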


Note that in the L\'evy regime, we do not require $f$ to be regular whereas we need $f$ to be continuous in the diffusive regime, so we can apply the Itô formula with $g$.

\subsection*{References}

The central limit theorem for Markov processes has a long history which goes back to Langevin \cite{langevin1908theorie}. Roughly, he studied a one-dimensional particle, the velocity of which is subject to random shocks and a restoring force $F(v) = -v$, and showed that the position of the particle behaves like a Brownian motion, when $t$ tends to infinity. 

\medskip
Probabilistic techniques to obtain such results can be found in Jacod-Shiryaev \cite[Chapter VIII]{jacod2013limit}: if there exists a solution to the Poisson equation having few properties, then we have a central limit theorem. This strategy is applied in Pardoux-Veretennikov \cite{pardoux} for very general multidimensional diffusion processes. However, \cite{pardoux} does not completely include the diffusive regime of Theorem \ref{main_theorem}: we have a little less assumptions but our result only holds in dimension one. It seems that the critical diffusive regime $\alpha = 2$ was not much studied. Finally, one can refer to Cattiaux, Chafaï and Guillin \cite{cattiaux2011central} for a detailed state of the art of the techniques used to show such theorems.

\medskip
Regarding the L\'evy regime, we could not find many results for Markov processes and this does not seem to be well developed. Through the prism of ergodic theory, Gouëzel \cite{gouezel2004central} proved stable limit theorems for a certain class of maps. Applied to a positive recurrent Markov chain $(X_n)_{n\geq0}$ with countable state space, its results tells us that if the function $f$ is such that, up to a slowly varying function,
\begin{equation}\label{gouezel_condition}
    \mathbb{P}_i\left(\sum_{k=0}^{\tau_i-1}f(X_k) > x\right) \underset{x\to\infty}{\sim}c_+|x|^{-\alpha} \quad \text{and}\quad\mathbb{P}_i\left(\sum_{k=0}^{\tau_i-1}f(X_k) < x\right)\underset{x\to-\infty}{\sim}c_-|x|^{-\alpha},
\end{equation}

\noindent
where $\tau_i = \inf\{k>0, X_k = i\}$, then we have
$$\frac{1}{n^{\alpha}}\sum_{k=0}^{n-1}f(X_k) \overset{d}{\longrightarrow}S^{\alpha},$$

\noindent
where $S^{\alpha}$ is a stable random variable. This is more or less obvious in this case since we have a sum of i.i.d blocks in the domain of attraction of stable laws and we can thus apply classical stable central limit theorems.

\medskip
Similar results with more tangible assumptions were proved by Jara, Komorowski and Olla \cite{ollamilton} for Markov chains with general state space. They assume the function $f$ is such that, roughly, $\mu(\{|f|>x\}) \simeq |x|^{-\alpha}$, where $\mu$ is the invariant measure of the chain and a spectral gap condition, the latter one being more or less equivalent to return times having exponential moments. Since the return times are small, it is reasonable to think that we have $\mathbb{P}_i(\sum_{k=0}^{\tau_i-1}f(X_k) > x)\simeq\mu(\{f>x\})$, at least if we consider a countable state space. They actually give two proofs, one using a martingale approximation and a second one with a renewal method involving a coupling argument.

\medskip
Mellet, Mischler and Mouhot \cite{mellet2011fractional} showed that for a linearized Boltzmann equation with heavy-tailed invariant distribution, the rescaled distribution of the position converges to the solution of the fractional heat equation, i.e. the position process behaves like a stable process. Their work is closely related to \cite{ollamilton} although their proof is entirely analytic.

\medskip
The method proposed in this article is rather powerful and gives a simple condition on the additive functional for stable limit theorems to occur. The assumption made on the function $f$ does not seem to be a time-continuous equivalent of \eqref{gouezel_condition} and we do not assume anything on the return times, nor on $\mu(\{|f|>x\})$. We will see in Section \ref{section_applications} that, indeed, the index $\alpha$ is not always prescribed by $\mu(\{|f|>x\})$.

\medskip
The strategy is the following: we classically express the process $(X_t)_{t\geq0}$ as a Brownian motion $(W_t)_{t\geq0}$ changed in time and in space. Then we write $\int_0^t f(X_s)\dr s = \int_0^{\rho_t}\phi(W_s)\dr s$ for a certain function $\phi$, which roughly looks like $\int_0^{\tau_t}|W_s|^{1/\alpha - 2}\dr s$ (up to asymmetry and principal values issues) in large time, where $(\tau_t)_{t\geq0}$ is the generalized inverse of the local time at $0$ of $(W_t)_{t\geq0}$. This process is known to be a stable process, see Itô-McKean \cite[page 226]{ito2012diffusion}, Jeulin and Yor \cite{jeulin1981distributions} and Biane and Yor \cite{biane1987valeurs}. The computations are thus tractable and the method is very robust. 

\medskip
This method was proposed by Fournier and Tardif \cite{fournier2018one}, studying a kinetic model and the purpose of this paper is to extend this method to general one-dimensional diffusions and to more general additive functionals. The model they were studying was the following:
\begin{equation}\label{kinetic_model}
V_t = B_t -\frac{\beta}{2}\int_0^t \frac{V_s}{1 + V_s^2}\dr s, \qquad X_t = \int_0^t V_s \dr s,
\end{equation}

\noindent
where $(B_t)_{t\geq0}$ is a Brownian motion and $\beta > 1$. The processes $(V_t)_{t\geq0}$ and $(X_t)_{t\geq0}$ are respectively the velocity and the position of a one-dimensional particle subject to the restoring force $F(v) = - \frac{\beta}{2}\frac{v}{1+v^2}$ and random shocks. Lebeau and Puel \cite{lebeau2019diffusion} showed that, when $\beta\in(1,5)\setminus\{2, 3, 4\}$, the rescaled distribution of the position $X_t$ converges to the solution of the fractional heat equation. Their work is analytical and relies on a deep spectral analysis leading to an impressive result. The diffusive regime ($\beta > 5$) and critical diffusive regime ($\beta = 5$) are treated by Nasreddine-Puel \cite{nasreddine2015diffusion} and Cattiaux-Nasreddine-Puel \cite{cattiaux2019diffusion}. We stress that all these papers are P.D.E papers. 

\medskip
To summarize, if the restoring force field is strong enough ($\beta \geq5$), the rescaled position process resembles a Brownian motion and we say that there is a \textit{normal diffusion limit}. On the other hand, when the force is weak ($\beta \in (1,5)$), the position resembles a stable process and there is an \textit{anomalous diffusion limit} (or \textit{fractional diffusion limit}).

\medskip
Actually, physicists discovered that atoms diffuse anomalously when they are cooled by a laser. See for instance Castin, Dalibard and Cohen-Tannoudji \cite{castin1991limits},  Sagi, Brook, Almog and Davidson \cite{sagi2012observation} and Marksteiner, Ellinger and Zoller \cite{marksteiner1996anomalous}. A theoretical study (see Barkai, Aghion and Kessler \cite{barkai2014area}) modeling the motion of atoms precisely by \eqref{kinetic_model} proved with quite a high level of rigor the observed phenomenons.

\medskip
In the L\'evy regime, the index $\alpha$ is not prescribed by the function $f = \text{id}$ and the invariant measure $\mu$. Indeed observe that we have $\alpha = (\beta + 1) / 3$ whereas one can check, see Section \ref{section_applications}, that
$$\mu(\{|f|>x\})\simeq|x|^{1-\beta}.$$

\noindent
In this situation, the return times are large and the dynamics are more complex.

\medskip
Then, using probabilistic techniques, Fournier and Tardif \cite{fournier2018one} treated all cases of \eqref{kinetic_model} for a larger class of symmetric forces. Naturally, we recover their result and enlarge it to asymmetrical forces, see Section \ref{section_applications}. In a companion paper \cite{fournier_dimension}, they also prove the result in any dimension and the proof is much more involved.

\medskip
We also found a paper of Bertoin and Werner \cite{windings_ber_wer} where they use similar techniques to redemonstrate \textit{Spitzer's theorem}.

\subsection*{Plan of the paper}

In Section \ref{preliminaries}, we recall some facts about slowly varying functions, local times and generalize some results on stable processes found in \cite{ito2012diffusion}, \cite{jeulin1981distributions} and \cite{biane1987valeurs}. Section \ref{proof} is dedicated to the proof of Theorem \ref{main_theorem} and Theorem \ref{general_theorem}. Finally, we apply our result to several models in Section \ref{section_applications}, illustrating some remarks made in the previous section.

\subsection*{Notations} 
Throughout the paper, we will use the function $\mathrm{sgn}_{a,b}(x) = a\bm{1}_{\{x>0\}} + b\bm{1}_{\{x<0\}}$ where $(a,b)\in\mathbb{R}^2$. The function $\mathrm{sgn}$ will denote the usual sign function, i.e. $\mathrm{sgn}(x) = \mathrm{sgn}_{1,-1}(x)$.

\section{Preliminaries}\label{preliminaries}

\subsection{Slowly varying functions}\label{slowly_varying_section}

In this section, we define slowly varying functions and give the properties of these functions that will be useful to us. One can refer to Bingham, Goldie and Teugels \cite{bingham1989regular} for more details.

\begin{definition}
A measurable function $\ell:[0,\infty)\mapsto(0,\infty)$ is said to be slowly varying if for every $\lambda>0$, $\ell(\lambda x) / \ell(x) \longrightarrow 1$ as $x\to\infty$.

\end{definition}

\noindent
Famous examples of slowly varying functions are powers of logarithm, iterated logarithms or functions having a strictly positive limit at infinity. It holds that for any $\theta > 0$, $x^{\theta}\ell(x)\to\infty$ and $x^{-\theta}\ell(x)\to0$, as $x\to\infty$. We will need the following lemma.

\begin{lemma}\label{prop_slow_var}
Let $\ell$ be a continuous slowly varying function.

\begin{enumerate}[label=(\roman*)]
    \item For each $0<a < b$, we have $\sup_{\lambda\in[a,b]}\left|\ell(\lambda x)/\ell(x) - 1\right|\underset{x\to\infty}{\longrightarrow}1.$
    
    \item \text{(Potter's bound).} For any $\delta > 0$ and $C>1$, there exists $x_0 > 0$, such that for all $x,y\geq x_0$,

    $$\frac{\ell(y)}{\ell(x)}\leq C\left(\Big|\frac{y}{x}\Big|^{\delta} \vee \Big|\frac{x}{y}\Big|^{\delta}\right).$$
    
    \item The functions $L : x\mapsto \int_1^x \frac{\dr v}{v\ell(v)}$ and $M : x\mapsto \int_1^x (\int_v^{\infty}\frac{\dr u}{u^{3/2}\ell(u)})^2\dr v$ are slowly varying, as well as $N:x\mapsto\int_x^{\infty} \frac{\dr v}{v\ell(v)}$ if $\int_1^{\infty}\frac{\dr v}{v\ell(v)} < \infty$.
\end{enumerate}
\end{lemma}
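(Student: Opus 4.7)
Parts (i) and (ii) are classical results on slowly varying functions, namely the Uniform Convergence Theorem and Potter's bounds respectively. Both are proved in detail in Bingham, Goldie and Teugels \cite{bingham1989regular} (Sections~1.2 and 1.5); I would simply invoke them, since any self-contained argument would be just a repetition of the standard one.

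For part (iii), the plan is to reduce each of the three claims to Karamata's theorem, after noting that the reciprocals $1/\ell$ and $1/\ell^2$ are themselves slowly varying, since $\ell(x)/\ell(\lambda x)\to 1$ as $x\to\infty$. The function $N$ is immediately slowly varying by Karamata's theorem applied in the convergent regime, which the hypothesis $\int_1^{\infty}\dr v/(v\ell(v))<\infty$ precisely guarantees. For $L$, I would split into two cases: if $\int_1^{\infty}\dr v/(v\ell(v))<\infty$, then $L$ is bounded and converges to a finite positive limit, hence is trivially slowly varying; otherwise Karamata in the divergent regime gives the claim directly.

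The only item requiring a non-trivial computation is $M$. First, applying Karamata to the regularly varying integrand $u^{-3/2}/\ell(u)$ (index $-3/2 < -1$, so the tail integral converges) yields the equivalence $\int_v^{\infty}\dr u/(u^{3/2}\ell(u)) \sim 2/(\sqrt{v}\ell(v))$ as $v \to \infty$. Squaring, the integrand defining $M$ is asymptotic to $4/(v\ell(v)^2)$. Since $1/\ell^2$ is slowly varying, a second application of Karamata to $\int_1^x \dr v/(v\ell(v)^2)$, splitting once more into the convergent and divergent cases, shows that $M$ itself is slowly varying. The only subtlety is the passage from equivalence of integrands to equivalence of their antiderivatives, but this is a standard consequence of Karamata's theorem in both regimes. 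I do not anticipate any serious obstacle: the lemma is a technical warm-up and the classical apparatus of regular variation handles everything cleanly.
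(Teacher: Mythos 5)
Your proposal is correct, but it takes a genuinely different route from the paper for part (iii). The paper gives a self-contained, elementary argument relying only on the uniform convergence theorem of point (i): for each of $L$, $N$, $M$ it computes the increment explicitly by the substitution $v = xu$, e.g. $L(\lambda x) - L(x) = \int_1^{\lambda}\frac{\dr u}{u\ell(xu)} \sim \log(\lambda)/\ell(x)$, and then shows this increment is negligible by proving the single lower bound $\ell(x)L(x)\to\infty$ (resp. $\ell(x)N(x)\to\infty$, $\ell^2(x)M(x)\to\infty$) via a truncation $\int_{1/A}^1$ and letting $A\to\infty$; this handles the convergent and divergent regimes in one stroke, with no case split. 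You instead outsource the work to Karamata's theorem (BGT Propositions 1.5.9a/b and 1.5.10) applied to the slowly varying functions $1/\ell$ and $1/\ell^2$, which forces you to split into convergent and divergent cases for $L$ and $M$ but buys you cleaner intermediate asymptotics — in particular the explicit equivalent $\int_v^{\infty}\frac{\dr u}{u^{3/2}\ell(u)} \sim \frac{2}{\sqrt{v}\,\ell(v)}$, which the paper never needs to isolate (it manipulates the squared inner integral directly). Your reduction of $M$ to $\int_1^x\frac{\dr v}{v\ell(v)^2}$ is valid: in the divergent case the equivalence of integrands passes to the antiderivatives by the standard comparison argument, and in the convergent case $M$ tends to a finite positive limit and is trivially slowly varying. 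Both treatments of points (i) and (ii) coincide: the paper also simply cites Bingham--Goldie--Teugels.
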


\begin{proof} \textbf{Items (i) and (ii).} See \cite[pages 6 and 25]{bingham1989regular}.

\bigskip\noindent
\textbf{Item (iii).} Let us start with the function $L$. For $\lambda > 0$, we have
$$L(\lambda x) = \int_1^{\lambda x} \frac{\dr v}{v\ell(v)} = L(x) + \int_x^{\lambda x}\frac{\dr v}{v\ell(v)} = L(x) + \int_1^{\lambda}\frac{\dr u}{u\ell(xu)}$$

\noindent
where we used the substitution $v = xu$ in the last equality. By (i), $\sup_{u\in[1,\lambda]}|\ell(x) / \ell(ux) - 1| \longrightarrow 0$ as $x\to\infty$, so that $\int_1^{\lambda}\frac{\dr u}{u\ell(xu)} \sim \log(\lambda)/\ell(x)$ as $x$ tends to infinity and it remains to show that $\ell(x)L(x) \longrightarrow \infty$ at infinity. Let $A > 1$ and consider $x > A$: we have

$$\ell(x)L(x) = \ell(x)\int_{1}^x\frac{\dr v}{v\ell(v)} = \ell(x)\int_{1/x}^1\frac{\dr u}{u\ell(xu)} \geq \int_{1/A}^1\frac{\ell(x)\dr u}{u\ell(xu)} \underset{x\to\infty}{\longrightarrow}\log(A),$$

\noindent
by Point (i) again. Now let $A$ to $\infty$ and the result follows. Regarding the function $N$, we assume $\int_1^{\infty}\frac{\dr v}{v\ell(v)} < \infty$ and we write, for $\lambda > 0$,
$$N(\lambda x) = N(x) - \int_x^{\lambda x}\frac{\dr v}{v\ell(v)} = N(x) - \int_1^{\lambda}\frac{\dr u}{u\ell(xu)}$$

\noindent
and, as previously, we only need to show that $\ell(x)N(x) \longrightarrow\infty$. We write

$$\ell(x)N(x) = \ell(x)\int_{x}^{\infty}\frac{\dr v}{v\ell(v)} = \int_1^{\infty}\frac{\ell(x)\dr u}{u\ell(xu)} \geq \int_1^A \frac{\ell(x)\dr u}{u\ell(xu)}$$

\noindent
for any $A>1$. But once again, $\int_1^A \frac{\ell(x)\dr v}{v\ell(xv)} \longrightarrow \log(A)$ and the result holds letting $A$ to infinity. Finally we show the result for $M$. We have, for $\lambda  > 0$,
$$M(\lambda x) = M(x) + \int_x^{\lambda x}\left(\int_v^{\infty}\frac{\dr u}{u^{3/2}\ell(u)}\right)^2 \dr v = M(x) + \int_1^{\lambda}\left(\int_y^{\infty}\frac{\dr z}{z^{3/2}\ell(xz)}\right)^2 \dr y$$

\noindent
A little study shows that $\int_1^{\lambda}\left(\int_y^{\infty}\frac{\dr z}{z^{3/2}\ell(xz)}\right)^2 \dr y \sim \int_1^{\lambda}\left(\int_y^{\infty}\frac{\dr z}{z^{3/2}\ell(x)}\right)^2 \dr y = \frac{4\log(\lambda)}{\ell^2(x)}$. It remains to show that $\ell^2(x) M(x) \longrightarrow \infty$. Let $A > 1$ and $x>A$, we have

$$\ell^2(x) M(x) = \int_{1/x}^1\left(\int_y^{\infty}\frac{\ell(x)\dr z}{z^{3/2}\ell(xz)}\right)^2 \dr y  \geq \int_{1/A}^1\left(\int_y^{\infty}\frac{\ell(x)\dr z}{z^{3/2}\ell(xz)}\right)^2 \dr y \underset{x\to\infty}{\longrightarrow}4\log(A)$$

\noindent
and the result follows by letting $A$ to $\infty$.
\end{proof}

\subsection{Brownian local times}\label{section_local_times}

Local times have been widely studied in Revuz-Yor \cite{revuz2013continuous} to which we refer for much more details. Let $(W_t)_{t\geq0}$ be a Brownian motion. We introduce the local time of $(W_t)_{t\geq0}$ at $x\in\mathbb{R}$, which is the process $(L_t^x)_{t\geq0}$ defined by

$$L_t^x = |W_t - x| - |x| - \int_0^t \mathrm{sgn}(W_s - x)\dr W_s.$$

\noindent
The process $(L_t^x)_{t\geq0}$ is continuous and non-decreasing and the random non-negative measure $\dr L_t^x$ on $[0, \infty)$ is a.s. carried by the set $\{t\geq0, W_t = x\}$, which is a.s. Lebesgue-null. We will heavily use the occupation times formula, see \cite[Chapter 6 Corollary 1.6 page 224]{revuz2013continuous}, which tells that a.s., for every $t\geq0$ and for all Borel function $\varphi : \mathbb{R}\to\mathbb{R}_+$,

$$\int_0^t\varphi(W_s)\dr s = \int_{\mathbb{R}}\varphi(x)L_t^x \dr x.$$

\noindent
We will also use the fact that the map $x\mapsto L_t^x$ is a.s. Hölder of order $\theta$, for $\theta\in (0, 1/2)$, uniformly on compact time intervals, see \cite[Corollary 1.8 page 226]{revuz2013continuous}, i.e. that 
\begin{equation}\label{holder_local_times}
    \sup_{(t,x)\in[0, T]\times\mathbb{R}}|x|^{-\theta}\left|L_t^x -L_t^0\right| < \infty \quad \text{a.s.}
\end{equation}

We now introduce $\tau_t = \inf\{u\geq0, L_u^0 > t\}$, the right-continuous generalized inverse of $(L_t^0)_{t\geq0}$. Some properties of $(\tau_t)_{t\geq0}$ will be needed. For all $t>0$,  $\mathbb{P}(\tau_{t_-} < \tau_t) = 0$, which means that $(\tau_t)_{t\geq0}$ has no deterministic jump time and a.s., for all $t\geq0$, $L_{\tau_t}^0 = t$ since $(L_t^0)_{t\geq0}$ is continuous. We will also use that a.s., for all $t \geq0, W_{\tau_t} = 0$ and that a.s., $\tau_0 = 0$.

\medskip
Finally, the processes $(\tau_t, W_t)_{t\geq0}$ and $\left(L_{\tau_t}^x\right)_{x\in\mathbb{R}, t\geq 0}$ enjoy the following scaling property: for all $c > 0$

\begin{equation}\label{scaling}
    (\tau_t, W_t)_{t\geq0} \overset{d}{=} (c^{-2}\tau_{ct}, c^{-1}W_{c^2t})_{t\geq0}\quad\text{and}\quad\left(L_{\tau_t}^x\right)_{x\in\mathbb{R}, t\geq 0}\overset{d}{=}\left(c^{-1}L_{\tau_{ct}}^{xc}\right)_{x\in\mathbb{R}, t\geq 0}
\end{equation}

\subsection{Stable processes}\label{stable_process}

There is a huge litterature on stable processes and we will mainly refer to Sato \cite{ken1999levy}. In this section we generalize some results on stable processes found in Itô-McKean \cite[page 226]{ito2012diffusion}, Jeulin-Yor \cite{jeulin1981distributions} and Biane-Yor \cite{biane1987valeurs}, where they worked in a symmetric or completely asymmetric framework. These kind of results were initiated by L\'evy himself \cite{levy}, where he first proved that $(\tau_t)_{t\geq0}$ is a $1/2$-stable subordinator. Let us first recall a classic result on stable processes, see Sato \cite[Theorem 14.15 page 86 and Definition 14.16 page 87]{ken1999levy}.

\begin{theorem}\label{stable_def_thm}
Let $\alpha\in(0,2)\setminus\{1\}$ and let $(X_t)_{t\geq0}$ be an $\alpha$-strictly stable process, i.e. a L\'evy process enjoying the scaling property $(X_t)_{t\geq0} \overset{d}{=} (c^{-1/\alpha}X_{ct})_{t\geq0}$ for all $c > 0$. Then there exist $\nu \geq 0$ and $\beta\in[-1,1]$ such that for all $\xi\in\mathbb{R}$

$$\mathbb{E}\left[\exp(i\xi X_t)\right] = \exp\left(-\nu t|\xi|^{\alpha}\left(1-i\beta\tan\left(\frac{\alpha\pi}{2}\right) \mathrm{sgn}(\xi)\right)\right).$$

\noindent
If $(X_t)_{t\geq0}$ is a $1$-stable process, i.e. a L\'evy process such that for all $c > 0$, there exists $k_c\in\mathbb{R}$ with $(X_t)_{t\geq0} \overset{d}{=} (c^{-1}X_{ct} + k_c t)_{t\geq0}$, then there exist $\nu \geq 0$, $\beta\in[-1,1]$ and $\tau\in\mathbb{R}$ such that for all $\xi\in\mathbb{R}$

$$\mathbb{E}\left[\exp(i\xi X_t)\right] = \exp\left(-\nu t|\xi|\left(1 + i\beta\frac{2}{\pi} \mathrm{sgn}(\xi)\log|\xi|\right) + it\tau\xi\right).$$

\noindent
In any case, if $(X_t)_{t\geq0}$ has only positive jumps, i.e. if its L\'evy measure is carried by $(0,\infty)$, then $\beta = 1$.
\end{theorem}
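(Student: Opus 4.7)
The plan is to start from the Lévy–Khintchine representation
\[\mathbb{E}[e^{i\xi X_t}] = \exp(t\psi(\xi)), \qquad \psi(\xi) = ib\xi - \tfrac12\sigma^2\xi^2 + \int_{\mathbb{R}\setminus\{0\}}\bigl(e^{i\xi x} - 1 - i\xi x\mathbf{1}_{\{|x|\leq 1\}}\bigr)\nu(\dr x),\]
valid for any Lévy process, and then let the scaling hypothesis dictate the shape of $\psi$ rather than try to compute the triplet $(b,\sigma,\nu)$ from scratch.

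First I would treat $\alpha\in(0,2)\setminus\{1\}$. The strict-stability identity $X_t\stackrel{d}{=}c^{-1/\alpha}X_{ct}$ translates into $\psi(\xi) = c\,\psi(c^{-1/\alpha}\xi)$, i.e.\ $\psi(\lambda\xi)=\lambda^\alpha\psi(\xi)$ for every $\lambda>0$. Because $\alpha<2$, the Gaussian term is incompatible with $\alpha$-homogeneity, so $\sigma=0$; the same homogeneity applied inside the integral forces the Lévy measure to take the form $\nu(\dr x) = (c_+\mathbf{1}_{\{x>0\}} + c_-\mathbf{1}_{\{x<0\}})|x|^{-1-\alpha}\dr x$ for some $c_\pm\geq 0$ (with the drift absorbed by the compensator). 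Reality of $X_t$ yields $\psi(-\xi) = \overline{\psi(\xi)}$, and combined with $\alpha$-homogeneity this pins down
\[\psi(\xi) = -|\xi|^\alpha\bigl(A - iB\,\mathrm{sgn}(\xi)\bigr)\]
for real constants $A\geq 0$ and $B\in\mathbb{R}$. Explicit evaluation of $\int_0^\infty(e^{i\xi x}-1-i\xi x\mathbf{1}_{\{x\leq 1\}})x^{-1-\alpha}\dr x$ via the classical identity involving $\Gamma(-\alpha)$ (with integration by parts when $\alpha>1$) relates $(A,B)$ to $(c_+,c_-)$ and produces the reparametrisation $A=\nu$, $B = \nu\beta\tan(\alpha\pi/2)$ with $\beta = (c_+-c_-)/(c_++c_-) \in [-1,1]$.

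The hard part will be $\alpha = 1$, because the weakened scaling only gives the functional equation $\psi(\xi) = c\,\psi(c^{-1}\xi) + ik_c\xi$ which admits logarithmic corrections. I would write $c = e^s$, differentiate in $s$ at $s=0$, and obtain the linear ODE $\xi\psi'(\xi) - \psi(\xi) = i k_0'\xi$. The real part still satisfies the homogeneous equation and must be even, forcing $\mathrm{Re}\,\psi(\xi) = -\nu|\xi|$; the imaginary part picks up a particular solution proportional to $\xi\log|\xi|$, yielding
\[\psi(\xi) = -\nu|\xi|\Bigl(1 + i\beta\tfrac{2}{\pi}\mathrm{sgn}(\xi)\log|\xi|\Bigr) + i\tau\xi\]
with three free parameters $\nu,\beta,\tau$. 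Matching this to the Lévy–Khintchine exponent of $(c_+\mathbf{1}_{\{x>0\}}+c_-\mathbf{1}_{\{x<0\}})|x|^{-2}\dr x$, which requires an Abel-type integration by parts to handle the principal value near $0$, again gives $\beta = (c_+-c_-)/(c_++c_-)$, hence $|\beta|\leq 1$. Finally, the one-sided-jumps statement is immediate in both regimes: if $\mathrm{supp}\,\nu\subset(0,\infty)$ then $c_-=0$, so $\beta = 1$.
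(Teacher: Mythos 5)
The paper does not actually prove this statement: it is quoted verbatim from Sato \cite[Theorem 14.15 and Definition 14.16]{ken1999levy}, so there is no in-paper argument to compare against. Your sketch reconstructs the standard derivation and is correct in outline: uniqueness of the L\'evy--Khintchine triplet plus the scaling identity forces $\sigma=0$ and $\nu(\dr x)=(c_+\bm{1}_{\{x>0\}}+c_-\bm{1}_{\{x<0\}})|x|^{-1-\alpha}\dr x$, the classical evaluation of $\int_0^\infty(e^{i\xi x}-1-i\xi x\bm{1}_{\{x\leq1\}})x^{-1-\alpha}\dr x$ via $\Gamma(-\alpha)(-i\xi)^{\alpha}$ yields the $\tan(\alpha\pi/2)$ parametrisation with $\beta=(c_+-c_-)/(c_++c_-)\in[-1,1]$, and the $\xi\log|\xi|$ correction for $\alpha=1$ comes out of the same computation with $|x|^{-2}$; the one-sided statement is indeed just $c_-=0$. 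Two small caveats. First, in the $\alpha=1$ case your ODE step (differentiating $\psi(\xi)=c\,\psi(\xi/c)+ik_c\xi$ in $c=e^s$ at $s=0$) presupposes differentiability of $\psi$ and of $c\mapsto k_c$, which you have not justified at that point; fortunately the step is dispensable, since the scaling of the triplet alone (which is unaffected by the extra drift $k_ct$) already pins down $\nu$ and $\sigma$, after which the explicit L\'evy--Khintchine computation you describe gives the stated form directly. Second, you should note the degenerate case $c_+=c_-=0$ separately (pure drift), where the formula holds with $\nu=0$ and $\beta$ arbitrary. With those points tidied up, your argument is a complete and self-contained proof of the cited result, which is more than the paper itself supplies.
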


\medskip
We now consider a Brownian motion $(W_t)_{t\geq0}$, its local time $(L_t^0)_{t\geq0}$ at 0, and the generalized inverse $(\tau_t)_{t\geq0}$ of its local time. Our goal is to build all possible stable processes in terms of this Brownian motion and its inverse local time. This is important in order to adapt the method of Fournier-Tardif \cite{fournier2018one} who only treated the symmetric case. Let us recall the notation $\mathrm{sgn}_{a,b}(x) = a\bm{1}_{\{x>0\}} + b\bm{1}_{\{x<0\}}$. We have the following result.

\begin{lemma}\label{stable0_1}
Let $\alpha\in(0,1)$, $(a,b)\in\mathbb{R}^2$ and define the process $K_t = \int_0^{t}\mathrm{sgn}_{a,b}(W_s)|W_s|^{1/\alpha - 2}\dr s$. Then $(S_t)_{t\geq0} = (K_{\tau_t})_{t\geq0}$ is strictly $\alpha$-stable and for all $\xi\in\mathbb{R}$ and all $t\geq 0$, we have

$$\mathbb{E}\left[\exp\left(i\xi S_t\right)\right] = \exp\left(-c_{\alpha,a,b}t|\xi|^{\alpha}\left(1-i\beta_{\alpha, a, b}\tan\left(\frac{\alpha\pi}{2}\right) \mathrm{sgn}(\xi)\right)\right),$$

\noindent
where $c_{\alpha,a,b} = \frac{2^{\alpha - 2}\pi}{\alpha\sin(\alpha\pi/2)}\left(\frac{\alpha^{\alpha}}{\Gamma(\alpha)}\right)^2 (|a|^{\alpha} + |b|^{\alpha})$ and $\beta_{\alpha, a, b} = \frac{\mathrm{sgn}(a)|a|^{\alpha} + \mathrm{sgn}(b)|b|^{\alpha}}{|a|^{\alpha} + |b|^{\alpha}}$.
\end{lemma}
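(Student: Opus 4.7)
The plan is to verify in sequence that $(S_t)_{t\geq 0}$ is a Lévy process, that it enjoys the $\alpha$-stable scaling $(S_{t})_{t\geq 0} \overset{d}{=} (c^{-1/\alpha}S_{ct})_{t\geq 0}$ for every $c>0$, and then to identify the constants supplied by Theorem \ref{stable_def_thm}. For the Lévy property, fix $0\leq s<t$: since $\tau_s$ is a stopping time with $W_{\tau_s}=0$, the strong Markov property gives that $W':=(W_{\tau_s+v})_{v\geq 0}$ is a Brownian motion independent of $\mathcal{F}_{\tau_s}$, whose inverse local time at $0$ evaluated at $t-s$ equals $\tau_t-\tau_s$. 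The change of variable $v=u-\tau_s$ then yields
$$S_t - S_s = \int_0^{\tau_t-\tau_s}\mathrm{sgn}_{a,b}(W'_v)|W'_v|^{1/\alpha-2}\dr v \overset{d}{=} S_{t-s},$$
independently of $\mathcal{F}_{\tau_s}\supset\sigma((S_u)_{u\leq s})$, so $S$ is a Lévy process.

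For the scaling, set $\tilde W_v = c^{-1}W_{c^2v}$ and $\tilde\tau_t = c^{-2}\tau_{ct}$; by \eqref{scaling} these have the joint law of $(W,\tau)$. The change of variable $u=c^2v$ together with the invariance $\mathrm{sgn}_{a,b}(cx)=\mathrm{sgn}_{a,b}(x)$ for $c>0$ give
$$\int_0^{\tilde\tau_t}\mathrm{sgn}_{a,b}(\tilde W_v)|\tilde W_v|^{1/\alpha-2}\dr v = c^{-2}\cdot c^{-(1/\alpha-2)}\int_0^{\tau_{ct}}\mathrm{sgn}_{a,b}(W_u)|W_u|^{1/\alpha-2}\dr u = c^{-1/\alpha}S_{ct},$$
so $(S_t)_{t\geq 0}\overset{d}{=}(c^{-1/\alpha}S_{ct})_{t\geq 0}$. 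Combined with the Lévy property, Theorem \ref{stable_def_thm} ensures that $\mathbb{E}[e^{i\xi S_t}] = \exp(-\nu t|\xi|^{\alpha}(1 - i\beta\tan(\alpha\pi/2)\mathrm{sgn}(\xi)))$ for some $\nu\geq 0$ and $\beta\in[-1,1]$ that remain to be determined.

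To compute $\nu$ and $\beta$ I would invoke Itô's excursion theory. With $F(e) = \int_0^{\ell(e)}\mathrm{sgn}_{a,b}(e(s))|e(s)|^{1/\alpha-2}\dr s$, the process $S_t = \sum_{s\leq t}F(e_s)$ is obtained by summing the functional $F$ over the Poisson point process of excursions of $W$, of intensity $\dr s\otimes n$ where $n$ is Itô's excursion measure, whence
$$\mathbb{E}[e^{i\xi S_t}] = \exp\Bigl(t\int_{\mathcal{E}}(e^{i\xi F(e)}-1)\,n(\dr e)\Bigr).$$
Splitting $n=\tfrac{1}{2}(n_++n_-)$ with $n_-$ the image of $n_+$ under $e\mapsto -e$, and combining the scaling of Itô's measure with the identity $F(e^{(c)}) = c^{1/\alpha}F(e)$ for $e^{(c)}(s) = ce(s/c^2)$, one finds that the pushforward of $n_+$ by $I(e):=\int_0^{\ell(e)}e(s)^{1/\alpha-2}\dr s$ must be of the form $C_\alpha y^{-1-\alpha}\dr y$ on $(0,\infty)$. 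The standard identity $\int_0^\infty(e^{i\eta y}-1)y^{-1-\alpha}\dr y = -\frac{\Gamma(1-\alpha)}{\alpha}|\eta|^{\alpha}[\cos(\alpha\pi/2) - i\mathrm{sgn}(\eta)\sin(\alpha\pi/2)]$, applied to $\eta=a\xi$ and $\eta=b\xi$, together with Euler's reflection formula $\Gamma(\alpha)\Gamma(1-\alpha)\sin(\alpha\pi)=\pi$, then produces the announced forms of $c_{\alpha,a,b}$ and $\beta_{\alpha,a,b}$.

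The main obstacle is pinning down the constant $C_\alpha$, which scaling alone leaves undetermined. I would fix it either by direct computation of a Mellin transform of $I$ under $n_+$ (using the description of $n_+$ conditioned on its length as the law of a positive normalized Brownian excursion, equivalently a $3$-dimensional Bessel bridge, see Revuz-Yor), or more economically by matching against the symmetric case $a=b=1$ already treated by Biane-Yor \cite{biane1987valeurs}, where $c_{\alpha,1,1}$ is explicitly known; the linearity of the Lévy measure in the contributions of the two half-lines then transfers the constant to arbitrary $(a,b)$.
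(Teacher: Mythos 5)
Your proof is correct, and the first two steps (the L\'evy property via the strong Markov property at $\tau_s$, and the scaling via \eqref{scaling}) coincide with the paper's argument. Where you genuinely diverge is in identifying the parameters $\nu$ and $\beta$. The paper stays elementary: it writes $S_t = aC_t + bD_t$ with $C_t = \int_0^{\tau_t}\bm{1}_{\{W_s>0\}}|W_s|^{1/\alpha-2}\dr s$ and $D_t$ its negative counterpart, invokes the Jeulin--Yor independence of $(L^x_{\tau_t})_{x\geq0}$ and $(L^x_{\tau_t})_{x\leq0}$ to get that $C_t$ and $D_t$ are i.i.d., pins $\beta=1$ for $C$ because it has only positive jumps, and then reads the unknown constant off the Biane--Yor formula for $H_t=C_t-D_t$. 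You instead go through It\^o excursion theory: the exponential formula for the Poisson point process of excursions, the scaling of $n_+$ forcing the image of $I$ to be $C_\alpha y^{-1-\alpha}\dr y$, and the explicit Fourier transform of $y^{-1-\alpha}$ plus Euler's reflection formula. Your route buys a cleaner derivation of the skewness $\beta_{\alpha,a,b}$ (it drops out of the Fourier integral with no independence argument needed) and makes the L\'evy measure of $S$ completely explicit, at the cost of heavier machinery and the usual integrability check $\int(1\wedge|F|)\,\dr n<\infty$ (which holds precisely because $\alpha<1$). As you correctly observe, scaling cannot determine $C_\alpha$, and your "economical" fix --- matching one known case --- is in substance the same external input the paper uses, namely the Biane--Yor constant. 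Two small points of care: the case whose characteristic function \eqref{characterisitic_centered} the paper quotes is $(a,b)=(1,-1)$ (the symmetric stable law $C_t-D_t$), not $a=b=1$ (which gives a spectrally positive subordinator-type variable); either works for the matching, but you should cite the one whose constant you actually know. And the half-line decomposition $n=\tfrac12(n_++n_-)$ depends on the normalization of the excursion measure relative to the semimartingale local time used in Section \ref{section_local_times}, so the matching step is not merely convenient but genuinely necessary to fix that normalization consistently.
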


When $\alpha\in(0,1)$, stable processes have finite variations and no drift part meaning they are pure jump processes and should be seen this way: $S_t = \sum_{s\in[0,t]}\int_{\tau_{s-}}^{\tau_s}\mathrm{sgn}_{a,b}(W_u)|W_u|^{1/\alpha - 2}\dr u$. The jumping times are the ones of $(\tau_t)_{t\geq0}$ and the size of a jump is equal to the integral of the function $\mathrm{sgn}_{a,b}(x)|x|^{1/\alpha - 2}$ over the corresponding excursion of the Brownian motion.

\medskip
While $(K_t)_{t\geq0}$ is well-defined when $\alpha\in(0,1)$ since $\int_0^T |W_s|^{1/\alpha - 2}\dr s < \infty$ a.s., this is not the case when $\alpha\in[1,2)$ and we have to work a little more.

\begin{lemma}\label{stable1_2}
Let $\alpha\in (1,2)$, $(a,b)\in\mathbb{R}^2$ and define $K_t = \int_{\mathbb{R}}\mathrm{sgn}_{a,b}(x)|x|^{1/\alpha -2}(L_t^x -L_t^0)\dr x$. Then $(S_t)_{t\geq 0} = (K_{\tau_t})_{t\geq 0}$ is strictly $\alpha$-stable and for all $\xi\in\mathbb{R}$ and all $t\geq 0$, we have

$$\mathbb{E}\left[\exp\left(i\xi S_t\right)\right] = \exp\left(-c_{\alpha,a,b}t|\xi|^{\alpha}\left(1-i\beta_{\alpha, a, b}\tan\left(\frac{\alpha\pi}{2}\right) \mathrm{sgn}(\xi)\right)\right),$$

\noindent
where $c_{\alpha,a,b} = \frac{2^{\alpha - 2}\pi}{\alpha\sin(\alpha\pi/2)}\left(\frac{\alpha^{\alpha}}{\Gamma(\alpha)}\right)^2 (|a|^{\alpha} + |b|^{\alpha})$ and $\beta_{\alpha, a, b} = \frac{\mathrm{sgn}(a)|a|^{\alpha} + \mathrm{sgn}(b)|b|^{\alpha}}{|a|^{\alpha} + |b|^{\alpha}}$.
\end{lemma}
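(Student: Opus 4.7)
The plan is to establish $(S_t)_{t\geq 0}$ is strictly $\alpha$-stable by verifying three things in sequence: pointwise well-definedness of $K_t$, the Lévy property combined with an $\alpha$-self-similarity relation (which via Theorem \ref{stable_def_thm} pins down the form of the characteristic function up to two constants), and finally the identification of those constants as $c_{\alpha,a,b}$ and $\beta_{\alpha,a,b}$. The first two steps are clean adaptations of arguments used in the symmetric and completely asymmetric settings of Jeulin-Yor and Biane-Yor, whereas identifying the constants is the genuinely computational part.

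For well-definedness, I would fix $\theta\in(1-1/\alpha,1/2)$---possible because $\alpha\in(1,2)$---and use the Hölder estimate \eqref{holder_local_times} to bound $|x|^{1/\alpha-2}|L_t^x-L_t^0|\leq C_t|x|^{1/\alpha-2+\theta}$ on compact sets, which is integrable at $0$ since the exponent exceeds $-1$; for large $|x|$ the integrand vanishes because $L_t^x$ has compact support in $x$. For the Lévy property of $(S_t)=(K_{\tau_t})$, I would apply the strong Markov property at $\tau_t$: since $W_{\tau_t}=0$, the shifted process $W':=(W_{\tau_t+u})_{u\geq 0}$ is an independent Brownian motion whose local time at $x$ is $L'^{x}_u=L_{\tau_t+u}^x-L_{\tau_t}^x$ and whose inverse local time at $0$ at level $s$ equals $\tau_{t+s}-\tau_t$ (using $L_{\tau_{t+s}}^0=t+s$); substituting into the definition of $K$ yields
\begin{equation*}
S_{t+s}-S_t=\int_{\mathbb{R}}\mathrm{sgn}_{a,b}(x)|x|^{1/\alpha-2}\bigl(L'^{x}_{\tau'_s}-L'^{0}_{\tau'_s}\bigr)\,\dr x,
\end{equation*}
which is $\mathcal{F}_{\tau_t}$-independent and equal in law to $S_s$. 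The Brownian scaling \eqref{scaling} combined with the substitution $y=cx$ in the defining integral gives $(S_t)\overset{d}{=}(c^{-1/\alpha}S_{ct})$, so by Theorem \ref{stable_def_thm} the characteristic function has the required form with some $\nu\geq 0$ and $\beta\in[-1,1]$.

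The remaining---and main---obstacle is to identify $\nu=c_{\alpha,a,b}$ and $\beta=\beta_{\alpha,a,b}$. I would do this through Itô excursion theory. Truncating the singularity, the quantity
\begin{equation*}
\int_{|x|>\epsilon}\mathrm{sgn}_{a,b}(x)|x|^{1/\alpha-2}L_{\tau_t}^{x}\,\dr x=\sum_{s\leq t}\int_0^{\zeta(e_s)}\bm{1}_{\{|e_s(u)|>\epsilon\}}\mathrm{sgn}_{a,b}(e_s(u))|e_s(u)|^{1/\alpha-2}\,\dr u
\end{equation*}
is a functional of the excursion Poisson point process, and after subtracting the divergent deterministic compensator $t\int_{|x|>\epsilon}\mathrm{sgn}_{a,b}(x)|x|^{1/\alpha-2}\dr x$ one recovers $S_t$ in the $\epsilon\to 0$ limit. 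Splitting into positive and negative excursions, which are independent under the Itô measure $n$, decomposes $S$ as $aA^{+}-bA^{-}$ with $A^{\pm}$ two i.i.d.\ spectrally positive $\alpha$-stable Lévy processes; Brownian scaling of excursions (the length $\zeta$ has density $\propto\zeta^{-3/2}$ under $n_{+}$, and the functional $\int_0^{\zeta}e_u^{1/\alpha-2}\dr u$ scales like $\zeta^{1/(2\alpha)}$) shows the image of $n_{+}$ under this integral has a Lévy density of the form $C|y|^{-1-\alpha}\dr y$. Feeding this into the Lévy-Khintchine formula for $aA^{+}-bA^{-}$ and carefully tracking the principal-value compensator in the regime $\alpha\in(1,2)$ produces the stated constants; the constant-matching is delicate bookkeeping, but one can sanity-check it against the symmetric ($a=b$) and completely asymmetric ($b=0$) specializations of Biane-Yor, Jeulin-Yor and Itô-McKean, in which the formulas are already known.
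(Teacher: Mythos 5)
Your overall architecture (well-definedness via the H\"older estimate \eqref{holder_local_times}, the L\'evy property, the scaling identity, then reduction via Theorem \ref{stable_def_thm} to identifying two constants) matches the paper's, and the first three steps are fine. Your L\'evy-property argument is even slightly more direct than the paper's: you apply the strong Markov property at $\tau_t$ straight to the integral representation, whereas the paper first truncates the singularity at $0$ (defining $K_t^{\eta}$, which Remark \ref{levy} handles) and then passes to the uniform limit using the same H\"older bound you invoke. Both work.

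The gap is in the final step, which is the actual content of the lemma. You reduce to computing the L\'evy measure of the excursion functional under the It\^o measure and then say the constant-matching is ``delicate bookkeeping'' to be ``sanity-checked'' against the known symmetric and one-sided cases. But the scaling argument for excursions only gives the \emph{form} $C|y|^{-1-\alpha}$ of the L\'evy density, not the constant $C$, and extracting $C$ explicitly (equivalently, the law of $\int_0^{\zeta}e_u^{1/\alpha-2}\,\dr u$ under $n_+$) is essentially the content of the Biane--Yor computation itself --- so as written you have deferred precisely the part that needs proving. The paper avoids this entirely: writing $S_t=aC_t+bD_t$ with $C_t=\int_0^{\infty}|x|^{1/\alpha-2}(L_{\tau_t}^x-t)\,\dr x$ and $D_t$ its negative-side analogue, the Jeulin--Yor independence of $(L_{\tau_t}^x)_{x\geq0}$ and $(L_{\tau_t}^x)_{x\leq0}$ makes $C_t,D_t$ i.i.d.; spectral positivity forces each to have exponent $c_{\alpha}'|\xi|^{\alpha}\bigl(1-i\tan(\tfrac{\alpha\pi}{2})\mathrm{sgn}(\xi)\bigr)$ by Theorem \ref{stable_def_thm}; and then $\mathbb{E}[e^{i\xi(C_t-D_t)}]=\mathbb{E}[e^{i\xi C_t}]\,\mathbb{E}[e^{-i\xi D_t}]=\exp(-2c_{\alpha}'t|\xi|^{\alpha})$, which compared with the Biane--Yor formula \eqref{characterisitic_centered} pins down $c_{\alpha}'=c_{\alpha}/2$ with no excursion computation at all. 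Your ``sanity check'' is in fact the proof, run in the right direction: the known antisymmetric case \emph{determines} the unknown constant rather than merely confirming it. You should restructure the last step accordingly (and note the sign convention: with $\mathrm{sgn}_{a,b}$ as defined in the paper the decomposition is $aC_t+bD_t$, both summands spectrally positive, not $aA^+-bA^-$).
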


The case $\alpha = 1$ is a little more tedious.

\begin{lemma}\label{stable_1}
Let $(a,b)\in\mathbb{R}$ and define the process $K_t = \int_{\mathbb{R}}\mathrm{sgn}_{a,b}(x)|x|^{-1}(L_t^x -L_t^0\bm{1}_{\{|x| \leq 1\}})\dr x$. Then $(S_t)_{t\geq 0} = (K_{\tau_t})_{t\geq 0}$ is $1$-stable. For all $\xi\in\mathbb{R}$ and all $t\geq 0$, we have

$$\mathbb{E}\left[\exp\left(i\xi S_t\right)\right] = \exp\left(-c_{a,b}t|\xi|\left(1+i\beta_{a, b}\frac{2}{\pi}\log(|\xi|) \mathrm{sgn}(\xi)\right) + it\tau_{a,b}\xi\right),$$

\noindent
where $c_{a,b} = \frac{\pi}{2} (|a| + |b|)$, $\beta_{a, b} = \frac{a + b}{|a| + |b|}$, $\tau_{a,b} = -(a + b)\left[ 2\gamma + \log(2) + \frac{\log|a|a + \log|b|b }{a+b}\right]$ and $\gamma$ is the Euler constant
\end{lemma}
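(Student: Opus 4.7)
The plan is to adapt the excursion-theoretic approach behind Lemmas \ref{stable0_1} and \ref{stable1_2}, with extra care for the logarithmic corrections characteristic of the $1$-stable regime. I would show that $(S_t)_{t\ge 0}=(K_{\tau_t})_{t\ge 0}$ is a L\'evy process by writing it as an integral against the It\^o excursion Poisson point process of $W$ away from $0$, and then identify its L\'evy--Khintchine triplet. Well-definedness of $K_t$ is easily checked: for large $|x|$ the integrand vanishes because $x\mapsto L_t^x$ is compactly supported, while near $x=0$ the H\"older regularity \eqref{holder_local_times} with any $\theta\in(0,1/2)$ yields $|\mathrm{sgn}_{a,b}(x)||x|^{-1}|L_t^x-L_t^0|\le C_t|x|^{\theta-1}$, which is integrable.

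I would then introduce the regularization $K_t^\varepsilon = \int_{|x|>\varepsilon}\mathrm{sgn}_{a,b}(x)|x|^{-1}(L_t^x-L_t^0\mathbf{1}_{\{|x|\le 1\}})\dr x$ for $\varepsilon\in(0,1)$, which rewrites using the identity $\int_{\varepsilon<|x|\le 1}\mathrm{sgn}_{a,b}(x)|x|^{-1}\dr x = -(a+b)\log\varepsilon$ as
\begin{equation*}
K_t^\varepsilon = \int_{|x|>\varepsilon}\mathrm{sgn}_{a,b}(x)|x|^{-1}L_t^x\dr x + (a+b)L_t^0\log\varepsilon.
\end{equation*}
Writing $(e_s)_{s\ge 0}$ for the Poisson point process of excursions of $W$ away from $0$ (with intensity the It\^o measure $n$), the identities $L_{\tau_t}^x=\sum_{s\le t}L^x(e_s)$ and $L_{\tau_t}^0=t$ give
\begin{equation*}
K_{\tau_t}^\varepsilon = \sum_{s\le t}F_\varepsilon(e_s) + (a+b)t\log\varepsilon, \qquad F_\varepsilon(e) = \int_{|x|>\varepsilon}\mathrm{sgn}_{a,b}(x)|x|^{-1}L^x(e)\dr x,
\end{equation*}
and since $n(F_\varepsilon\ne 0)\le n(\sup_u|e_u|>\varepsilon)<\infty$, the first summand is a compound Poisson process with characteristic exponent $\int n(\dr e)(e^{i\xi F_\varepsilon(e)}-1)$.

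Next I would let $\varepsilon\to 0$. The H\"older control shows $K_{\tau_t}^\varepsilon\to S_t$ a.s., so it remains to establish convergence of the characteristic exponent. Computing the pushforward of $n$ under $e\mapsto F_\varepsilon(e)$ via the It\^o description of $n$ and the Brownian scaling of excursions identifies the limiting L\'evy measure as $\nu(\dr y) = c_+y^{-2}\mathbf{1}_{\{y>0\}}\dr y + c_-|y|^{-2}\mathbf{1}_{\{y<0\}}\dr y$ with $c_+ = (\pi/2)(a\mathbf{1}_{\{a>0\}} + b\mathbf{1}_{\{b>0\}})$ and analogously $c_-$, which already gives $c_{a,b}=(\pi/2)(|a|+|b|)$ and $\beta_{a,b}=(a+b)/(|a|+|b|)$.

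The main obstacle is the precise bookkeeping of drift-type terms, the signature difficulty of the $\alpha=1$ regime. The truncation correction $(a+b)t\log\varepsilon$, the L\'evy--Khintchine compensator $-\int z\mathbf{1}_{\{|z|\le 1\}}\nu(\dr z)$, and the asymmetric $\log|\xi|$ factor produced by the pure $1$-stable exponent must combine to give exactly $\tau_{a,b} = -(a+b)[2\gamma + \log 2 + (a\log|a|+b\log|b|)/(a+b)]$; each source of a $\log$ must be isolated and the bounded remainders shown to converge as $\varepsilon\to 0$. Once this is sorted out, the announced characteristic function follows from the L\'evy--Khintchine formula, see Sato \cite{ken1999levy}.
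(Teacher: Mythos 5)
Your strategy is legitimate in principle but genuinely different from the paper's, and it is worth recording what the paper does instead: it never computes the L\'evy--Khintchine triplet of $S$ from excursion theory. It obtains $S_t$ as the a.s.\ limit, as $\alpha\downarrow 1$, of $S_t^{\alpha}+\frac{(a+b)\alpha}{\alpha-1}t$, where $(S_t^{\alpha})_{t\geq0}$ are the $\alpha$-stable processes of Lemma \ref{stable1_2} whose characteristic functions are already known in closed form (the shift $\frac{(a+b)\alpha}{\alpha-1}=\int_{|x|\geq1}\mathrm{sgn}_{a,b}(x)|x|^{1/\alpha-2}\dr x$ accounts for the change of truncation in the definition of $K$), and then Taylor-expands $c_{\alpha,a,b}\beta_{\alpha,a,b}|\xi|^{\alpha-1}\tan(\alpha\pi/2)+\frac{(a+b)\alpha}{\alpha-1}$ at $\alpha=1$. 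The Euler constant enters through $\Gamma'(1)=-\gamma$ and the $\log|\xi|$ through $|\xi|^{\alpha-1}=1+(\alpha-1)\log|\xi|+o(\alpha-1)$. This reduces the whole lemma to a one-variable expansion of an explicit function, at the price of having proved Lemma \ref{stable1_2} first.

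The gap in your proposal is that it stops exactly where the real difficulty begins. Everything you set up through the identification of the L\'evy measure only uses the tail of the law of the excursion functional under $n$, and hence only recovers $c_{a,b}$ and $\beta_{a,b}$ --- which already follow from Theorem \ref{stable_def_thm} and the Biane--Yor computation, as in the proofs of Lemmas \ref{stable0_1} and \ref{stable1_2}. The entire content of the $\alpha=1$ case is the exact drift $\tau_{a,b}=-(a+b)\bigl[2\gamma+\log 2+(a\log|a|+b\log|b|)/(a+b)\bigr]$, and in your scheme this requires evaluating the finite part, as $\varepsilon\to0$, of $\int n(\dr e)\bigl(e^{i\xi F_\varepsilon(e)}-1\bigr)+i\xi(a+b)\log\varepsilon$, i.e.\ knowing the full law of $\int_0^{\zeta} e(u)^{-1}\dr u$ under the It\^o measure and not merely its tail. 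You assert that the three sources of logarithms ``must combine'' to give $\tau_{a,b}$, but you never exhibit where $2\gamma+\log 2$ comes from; this is not a routine verification, and without it the lemma is not proved. There is also a quantitative slip: since $\int_0^\infty(1-\cos u)u^{-2}\dr u=\pi/2$, a L\'evy density $c_+y^{-2}$ contributes $-\tfrac{\pi}{2}c_+|\xi|$ to the real part of the exponent, so to land on $c_{a,b}=\tfrac{\pi}{2}(|a|+|b|)$ you need $c_+=a\bm{1}_{\{a>0\}}+b\bm{1}_{\{b>0\}}$ \emph{without} the extra factor $\pi/2$ you wrote (and this value of $c_+$ is itself a nontrivial excursion computation you would have to carry out).
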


To handle the proof, we need first to make the following general remark.

\begin{remark}\label{levy}
Let $\phi : \mathbb{R}\to\mathbb{R}$ be a measurable function such that for all $T>0$, $\int_0^T |\phi(W_s)|\dr s < \infty$ a.s. The process $Z_t = \int_0^{\tau_t}\phi(W_s)\dr s$ is a L\'evy process with respect to the filtration $(\mathcal{F}_{\tau_t})_{t\geq0}$. Indeed, for all $0<s<t$, $Z_t - Z_s = \int_{\tau_s}^{\tau_t}\phi(W_u)\dr u = \int_{0}^{\tau_t - \tau_s}\phi(W_{u+\tau_s})\dr u = \int_{0}^{\bar{\tau}_{t-s}}\phi(\bar{W}_u)\dr u$, where $(\bar{W}_u)_{u\geq0} = (W_{u+\tau_s})_{u\geq0}$ is a Brownian motion independent of $\mathcal{F}_{\tau_s}$ thanks to the strong Markov property and since $\bar{W}_0 = W_{\tau_s} = 0$ a.s., and where $(\bar{\tau}_u)_{u\geq0}$ is the generalized inverse of its local time. Hence the increments of $(Z_t)_{t\geq0}$ are independent and stationary.
\end{remark}

\begin{proof}[Proof of Lemma \ref{stable0_1}]

Let us first show that $\left(S_t\right)_{t\geq0}$ is strictly $\alpha$-stable. It is a L\'evy process by Remark \ref{levy}. For all $c>0$, we have from \eqref{scaling}

\begin{equation*}
    \begin{split}
        S_t & \overset{d}{=} \int_0^{c^{-2}\tau_{ct}}\mathrm{sgn}_{a,b}(c^{-1}W_{c^2s})|c^{-1}W_{c^2s}|^{1/\alpha - 2}\dr s \\
          & = c^{2-1/\alpha}\int_0^{c^{-2}\tau_{ct}}\mathrm{sgn}_{a,b}(W_{c^2s})|W_{c^2s}|^{1/\alpha - 2}\dr s \\
          & = c^{-1/\alpha}\int_0^{\tau_{ct}}\mathrm{sgn}_{a,b}(W_u)|W_{u}|^{1/\alpha - 2}\dr u,
    \end{split}
\end{equation*}

\noindent
which equals $c^{-1/\alpha}S_{ct}$. Hence $\left(S_t\right)_{t\geq0}$ is strictly $\alpha$-stable. We now focus on showing the explicit form of its characteristic function and introduce first the processes

$$C_t = \int_0^{\tau_t}\bm{1}_{\{W_s > 0\}}|W_s|^{1/\alpha - 2}\dr s, \qquad D_t = \int_0^{\tau_t}\bm{1}_{\{W_s < 0\}}|W_s|^{1/\alpha - 2}\dr s$$

\noindent
and $H_t = C_t - D_t$. We have $S_t = aC_t + bD_t$. It is clear that $(C_t)_{t\geq0}$, $(D_t)_{t\geq0}$ and $(H_t)_{t\geq0}$ are also  strictly stable processes because e.g. $(C_t)_{t\geq0}$ is nothing but $(S_t)_{t\geq0}$ when $a = 1$ and $b = 0$. It is also known from Biane-Yor \cite{biane1987valeurs} that for all $\xi\in\mathbb{R}$, for all $t\geq0$ that

\begin{equation}\label{characterisitic_centered}
    \mathbb{E}\left[\exp(i\xi H_t)\right] = \exp\left(-tc_{\alpha}|\xi|^{\alpha}\right) \:\:\:\text{with}\:\:\:c_{\alpha} = \frac{2^{\alpha - 1}\pi}{\alpha\sin(\alpha\pi/2)}\left(\frac{\alpha^{\alpha}}{\Gamma(\alpha)}\right)^2.
\end{equation}

\noindent
Using the occupation times formula, we can write

$$C_t = \int_0^\infty|x|^{1/\alpha - 2}L_{\tau_t}^x \dr x \quad \text{and}\quad D_t = \int_{-\infty}^0|x|^{1/\alpha - 2}L_{\tau_t}^x \dr x.$$

\noindent
But it is well known, see \cite[page 215]{jeulin1981distributions}, that for a fixed $t$, the processes $(L_{\tau_t}^x)_{x\geq0}$ and $(L_{\tau_t}^x)_{x\leq0}$ are independent and thus $C_t$ and $D_t$ are independent. It is also clear that they have the same law. Since $(C_t)_{t\geq0}$ has only positive jumps, Theorem \ref{stable_def_thm} tells us that its characteristic function can be written

$$\mathbb{E}\left[\exp(i\xi C_t)\right] = \exp\left(-c_{\alpha}'t|\xi|^{\alpha}\left(1-i\tan\left(\frac{\alpha\pi}{2}\right) \mathrm{sgn}(\xi)\right)\right),$$

\noindent
for some $c_{\alpha}'>0$. Now since $\mathbb{E}\left[\exp(i\xi H_t)\right] = \mathbb{E}\left[\exp(i\xi C_t)\right]\mathbb{E}\left[\exp(-i\xi D_t)\right] = \exp\left(-2c_{\alpha}'t|\xi|^{\alpha}\right)$ it is clear from \eqref{characterisitic_centered} that $c_{\alpha}' = c_{\alpha} / 2$. Now having in mind that $S_t = aC_t + bD_t$, and $C_t$ and $D_t$ are i.i.d variables, we easily complete the proof.
\end{proof}

\begin{proof}[Proof of Lemma \ref{stable1_2}]

To show that $(S_t)_{t\geq0}$ is a L\'evy process, we first introduce, for $\eta > 0$,
\begin{equation*}
    \begin{split}
        K_t^{\eta} & =\int_0^t \mathrm{sgn}_{a,b}(W_s)|W_s|^{1/\alpha -2}\bm{1}_{\{|W_s| \geq \eta\}}\dr s - \left(\int_{\mathbb{R}}\mathrm{sgn}_{a,b}(x)|x|^{1/\alpha -2}\bm{1}_{\{|x| \geq \eta\}}\dr x\right) L_t^0\\
          & = \int_{\mathbb{R}}\mathrm{sgn}_{a,b}(x)|x|^{1/\alpha -2}\bm{1}_{\{|x| \geq \eta\}}(L_t^x -L_t^0)\dr x,
    \end{split}
\end{equation*}

\noindent
where we used the occupation times formula. We show that $(K_t^{\eta})_{t\geq 0}$ converges a.s., uniformly on compact time intervals to $(K_t)_{t\geq 0}$. For $\eta > 0$, we have 
\begin{equation*}
    \begin{split}
        \sup_{t\in[0,T]}\left|K_t^{\eta} - K_t\right| & \leq \int_{\mathbb{R}} \left|\mathrm{sgn}_{a,b}(x)|x|^{1/\alpha -2}\bm{1}_{\{|x| \geq \eta\}} - \mathrm{sgn}_{a,b}(x)|x|^{1/\alpha -2}\right|\times\sup_{t\in[0,T]}\left|L_t^x - L_t^0\right|\dr x\\
          & \leq \left(\sup_{(t,x)\in[0, T]\times\mathbb{R}}|x|^{-\theta}\left|L_t^x -L_t^0\right|\right) \int_{\mathbb{R}} \bm{1}_{\{|x| < \eta\}} |x|^{\theta + 1/\alpha - 2}\dr x,
    \end{split}
\end{equation*}

\noindent
where we have fixed $\theta \in (0, 1/2)$ such that $1/\alpha - 2 +\theta > -1$, which is possible since $\alpha <2$. We conclude using that $\sup_{(t,x)\in[0, T]\times\mathbb{R}}|x|^{-\theta}\left|L_t^x -L_t^0\right| < \infty$ a.s., see \eqref{holder_local_times}.

\medskip
From Remark \ref{levy}, and since a.s. for all $t\geq0$, $L_{\tau_t}^0 = t$, we know that $(S_t^{\eta})_{t\geq0} = (K^{\eta}_{\tau_t})_{t\geq0}$ is a L\'evy process for all $\eta>0$. Hence $(S_t)_{t\geq0}$ is a L\'evy process: for all $t\geq0$, a.s., $S_t = \lim_{\eta\to0}S_t^{\eta}$ so that for all $t_1, \dots, t_n \geq0$, a.s., $(S_{t_1}, \dots, S_{t_n}) = \lim_{\eta\to0}(S_{t_1}^{\eta}, \dots, S_{t_n}^{\eta})$ and thus $(S_t)_{t\geq0}$ is the limit of $(S_t^{\eta})_{t\geq0}$ in the finite dimensional distribution sense, which is sufficient to conclude. We now show that it has the appropriate scaling property. By \eqref{scaling}, we have, for any $c > 0$,
\begin{equation*}
    \begin{split}
        S_t & = \int_{\mathbb{R}}\mathrm{sgn}_{a,b}(x)|x|^{1/\alpha -2}(L_{\tau_t}^x -t)\dr x \\  & \overset{d}{=} \int_{\mathbb{R}}\mathrm{sgn}_{a,b}(x)|x|^{1/\alpha -2}(c^{-1}L_{\tau_{ct}}^{cx} -t)\dr x\\
          & = c^{-1/\alpha}\int_{\mathbb{R}}\mathrm{sgn}_{a,b}(x)|x|^{1/\alpha -2}(L_{\tau_{ct}}^{x} -ct)\dr x,
    \end{split}
\end{equation*}

\noindent
which equals $c^{-1/\alpha}S_{ct}$.

\medskip
As in the previous proof, we consider the processes

$$C_t = \int_{0}^{\infty}|x|^{1/\alpha -2}(L_{\tau_t}^x -t)\dr x, \qquad D_t =\int_{-\infty}^{0}|x|^{1/\alpha -2}(L_{\tau_t}^x -t)\dr x$$

\noindent
and $H_t = C_t - D_t$. For a fixed $t\geq 0$, $C_t$ and $D_t$ are independent and identically distributed random variables and it is clear that they only have positive jumps and thus their characteristic function is equal to $\exp\left(-c_{\alpha}'t|\xi|^{\alpha}\left(1-i\tan\left(\frac{\alpha\pi}{2}\right) \mathrm{sgn}(\xi)\right)\right)$ for some $c_{\alpha}' >0$ by Theorem \ref{stable_def_thm}. But once again we know from Biane-Yor \cite{biane1987valeurs} that the characteristic function of $H_t$ is equal to $\exp\left(-tc_{\alpha}|\xi|^{\alpha}\right)$ so once again it comes that $c_{\alpha}' = c_{\alpha}/2$ and we can conlude as in the previous proof.
\end{proof}

\begin{proof}[Proof of Lemma \ref{stable_1}]
As previously, we can show that $(S_t)_{t\geq 0}$ is a L\'evy process by approximation. Then it is enough to show that the characteristic function of $S_t$ has the stated form. To this end, we approach $(S_t)_{t\geq 0}$ by the $\alpha$-stable processes from Lemma \ref{stable1_2}, when $\alpha\in(1,2)$. We denote by $(K_t^{\alpha})_{t\geq 0}$ and $(S_t^{\alpha})_{t\geq 0}$ the processes from Lemma \ref{stable1_2}. We first show that for all $T >0$, a.s.,
\begin{equation}\label{aim_stable_1}
    \sup_{t\in[0,T]}\left|K_t^{\alpha} + \left(\int_{|x|\geq1}\mathrm{sgn}_{a,b}(x)|x|^{1/\alpha -2}\dr x\right)L_t^0 - K_t\right| \underset{\alpha\downarrow 1}{\longrightarrow} 0.
\end{equation}
\noindent
Since $\int_{|x|\geq1}\mathrm{sgn}_{a,b}(x)|x|^{1/\alpha -2}\dr x = \frac{(a+b)\alpha}{\alpha-1}$ and $1/\alpha - 2 < -1$ for $\alpha\in(1,2)$, we can write
\begin{align*}
    K_t^{\alpha} + \frac{(a+b)\alpha}{\alpha-1}L_t^0 - K_t = & \int_{|x|\leq1}\mathrm{sgn}_{a,b}(x)\left(|x|^{1/\alpha -2} - |x|^{-1}\right)\left(L_t^x - L_t^0\right)\dr x \\
     & + \int_{|x|>1}\mathrm{sgn}_{a,b}(x)\left(|x|^{1/\alpha -2} - |x|^{-1}\right)L_t^x \dr x.
\end{align*}
\noindent
Let us introduce $M_t = \sup_{s\in[0,t]}|W_s|$. Then a.s., for any $x>M_t$, $L_t^x = 0$. Set $c = |a|\vee|b|$, for any $T >0$, a.s.,
\begin{align*}
    \sup_{t\in[0,T]}\left|K_t^{\alpha} + \frac{(a+b)\alpha}{\alpha-1}L_t^0 - K_t\right| \leq & c\int_{|x|\leq1}\left(|x|^{1/\alpha -2} - |x|^{-1}\right)\sup_{t\in[0,T]}\left|L_t^x - L_t^0\right| \dr x \\
     & + c\bm{1}_{\{M_T\geq1\}}\int_{1\leq x\leq M_T}\left(|x|^{-1} - |x|^{1/\alpha -2}\right)L_T^x \dr x.
\end{align*}
\noindent
Let us call $R_T^{1,\alpha}$ the first term on the right-hand-side of the above inequality and $R_T^{2,\alpha}$ the second. Then we have for any $T>0$, a.s.,
$$R_T^{1,\alpha} \leq c\left(\sup_{(t,x)\in[0, T]\times[-1,1]}|x|^{-1/3}\left|L_t^x -L_t^0\right|\right)\int_{|x|\leq1}\left(|x|^{1/\alpha -2} - |x|^{-1}\right)|x|^{1/3}\dr x.$$
\noindent
By \eqref{holder_local_times} again, the quantity $\sup_{(t,x)\in[0, T]\times[-1,1]}|x|^{-1/3}\left|L_t^x -L_t^0\right|$ is a.s. finite. For any $x\neq0$, the integrand in the above quantity converges to $0$ as $\alpha$ decreases to $1$. We can use dominated convergence since for any $\alpha\in[1, 4/3)$, for any $x\in[-1, 1]\setminus\{0\}$, we have $(|x|^{1/\alpha -2} - |x|^{-1})|x|^{1/3} \leq (|x|^{-5/4} - |x|^{-1})|x|^{1/3}$, which is integrable on $(-1,0)$ and $(0,1)$. We conclude that $R_T^{1,\alpha}$ converges to $0$ almost surely as $\alpha\downarrow 1$. Regarding $R_T^{2,\alpha}$, we have a.s., for any $T>0$, a.s.
\begin{align*}
    R_T^{2,\alpha} & \leq c \bm{1}_{\{M_T\geq1\}}\sup_{(t,x)\in[0,T]\times \mathbb{R}}|L_t^x|\times\int_{1\leq x\leq M_T}\left(|x|^{-1} - |x|^{1/\alpha -2}\right) \dr x \\
     & \leq c\bm{1}_{\{M_T\geq1\}}\sup_{(t,x)\in[0,T]\times \mathbb{R}}|L_t^x|\times\int_{1\leq x\leq M_T}|x|^{-1}\dr x.
\end{align*}
\noindent
Again, we conclude by dominated convergence that $R_T^{2,\alpha}$ converges to $0$ almost surely as $\alpha\downarrow 1$. All in all, we showed that \eqref{aim_stable_1} holds.

\bigskip
As a consequence, for all $t\geq0$ and all $\xi\in\mathbb{R}$, $$\lim_{\alpha\downarrow 1}\mathbb{E}\left[\exp\left(i\xi S_t^{\alpha}\right)\right]e^{i\xi t\frac{(a+b)\alpha}{\alpha-1}} = \mathbb{E}\left[\exp\left(i\xi S_t\right)\right].$$ 

\noindent
But we know from Lemma \ref{stable1_2} that
$$\mathbb{E}\left[\exp\left(i\xi S_t^{\alpha}\right)\right]e^{i\xi t\frac{(a+b)\alpha}{\alpha-1}} = \exp\left(-c_{\alpha,a,b}t|\xi|^{\alpha} + it\xi\left(c_{\alpha,a,b}\beta_{\alpha,a,b}|\xi|^{\alpha-1}\tan\left(\frac{\alpha\pi}{2}\right) + \frac{(a+b)\alpha}{\alpha - 1}\right)\right),$$

\noindent
where $c_{\alpha,a,b}\beta_{\alpha,a,b} = \frac{2^{\alpha - 2}\pi}{\alpha\sin(\alpha\pi/2)}\left(\frac{\alpha^{\alpha}}{\Gamma(\alpha)}\right)^2(\mathrm{sgn}(a)|a|^{\alpha} + \mathrm{sgn}(b)|b|^{\alpha}) \underset{\alpha\downarrow 1}{\to}\frac{\pi}{2}(a+b)$. The function $h_{a,b}(\alpha) = c_{\alpha,a,b}\beta_{\alpha,a,b}$ defined for $\alpha > 0$ is $C^1$ in a neighborhood of $1$ and a careful computation shows that
$$h_{a,b}'(1) = \frac{\pi}{2}(a+b)\left[1 + \log(2) + 2\gamma + \frac{\log|a|a + \log|b|b }{a+b}\right],$$

\noindent
where $\gamma$ is the Euler constant, which appears from the fact that $\Gamma'(1) = - \gamma$. Hence we have $c_{\alpha,a,b}\beta_{\alpha,a,b} \underset{\alpha\downarrow 1}{=} \frac{\pi}{2}(a+b) + h_{a,b}'(1)(\alpha - 1) + o(\alpha - 1)$. Now using that $|\xi|^{\alpha-1}  \underset{\alpha\downarrow 1}{=} 1 + (\alpha -1)\log|\xi| + o(\alpha - 1)$ and $\tan\left(\frac{\alpha\pi}{2}\right)\underset{\alpha\downarrow 1}{=} \frac{-2}{\pi(\alpha - 1)} + o(1)$, it comes that
$$c_{\alpha,a,b}\beta_{\alpha,a,b}|\xi|^{\alpha-1}\tan\left(\frac{\alpha\pi}{2}\right) + \frac{(a+b)\alpha}{\alpha - 1} = (a+b) - \frac{2}{\pi}h_{a,b}'(1) - (a+b)\log|\xi| + o(1),$$

\noindent
and thus we have
$$\mathbb{E}\left[\exp\left(i\xi S_t^{\alpha}\right)\right]e^{i\xi t\frac{\alpha}{\alpha-1}}\underset{\alpha\downarrow 1}{\to}\exp\left(-c_{a,b}t|\xi|\left(1+i\beta_{a, b}\frac{2}{\pi}\log(|\xi|) \mathrm{sgn}(\xi)\right) + it\tau_{a,b}\xi\right),$$

\noindent
where $c_{a,b} = \frac{\pi}{2} (|a| + |b|)$, $\beta_{a, b} = \frac{a + b}{|a| + |b|}$ and $\tau_{a,b} = -(a + b)\left[2\gamma + \log(2) + \frac{\log|a|a + \log|b|b }{a+b}\right]$.
\end{proof}

\subsection{Inverting time-changes}

We remind some classical convergence results enabling to treat the convergence of the generalized inverses of time changes.

\begin{lemma}\label{inverse_time}
For all $n\geq 1$, let $(a_t^n)_{t\geq0}$ from $[0,+\infty)$ to itself be a continuous, increasing and bijective function for all $n\geq 1$ and consider its inverse $(r_t^n)_{t\geq0}$. Assume $(a_t^n)_{t\geq0}$ simply converges, as $n\to\infty$, to a non-decreasing function $(a_t)_{t\geq0}$ such that $\lim_{t\to+\infty}a_t = \infty$. Consider $r_t = \inf \{u\geq0, a_u > t\}$ its right-continuous generalized inverse and $J = \{t\geq0, r_{t_-} < r_t\}$. For all $t\in [0, +\infty)\setminus J$, we have $\lim_{n\to+\infty}r_t^n = r_t$.
\end{lemma}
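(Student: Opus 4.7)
The plan is to show the convergence by squeezing $r_t^n$ into $(r_t-\varepsilon,\, r_t+\varepsilon)$ for arbitrary $\varepsilon>0$, separately proving the upper and lower bounds. Since each $a^n$ is a continuous strictly increasing bijection of $[0,+\infty)$, its inverse $r^n$ is also a continuous strictly increasing bijection, and for every $s\geq 0$ we have the equivalences $r_t^n < s \Longleftrightarrow a_s^n > t$ and $r_t^n > s \Longleftrightarrow a_s^n < t$. I would combine these equivalences with the pointwise convergence $a_s^n \to a_s$ applied at the specific points $s=r_t \pm \varepsilon$.

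For the upper bound $\limsup_n r_t^n \le r_t$, I would first establish that $a_u > t$ whenever $u > r_t$ (this does not use $t \notin J$). Indeed, by definition $r_t = \inf\{v : a_v > t\}$, so for any $u > r_t$ there exists $v \in [r_t, u)$ with $a_v > t$, and the monotonicity of $a$ gives $a_u \ge a_v > t$. Applied at $u=r_t+\varepsilon$, this yields $a_{r_t+\varepsilon} > t$; pointwise convergence then gives $a^n_{r_t+\varepsilon} > t$ for $n$ large, which via the equivalence above means $r_t^n < r_t + \varepsilon$.

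For the lower bound $\liminf_n r_t^n \ge r_t$, which is the main obstacle, the key claim is: if $t \notin J$, then $a_u < t$ strictly for every $u < r_t$. This is where the continuity assumption $r_{t_-}=r_t$ is essential. The delicate case is $a_u = t$, since $a_u > t$ would already contradict $u < r_t = \inf\{v : a_v > t\}$. If one had $a_u = t$ with $u < r_t$, monotonicity together with the same infimum definition would force $a_v = t$ on the whole interval $[u, r_t)$; but then for every $t' < t$ we would get $a_u > t'$, hence $r_{t'} \leq u$, and letting $t' \uparrow t$ would yield $r_{t_-} \le u < r_t$, contradicting $t \notin J$.

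Having the claim, I would apply it with $u = r_t - \varepsilon$ in the nontrivial case $r_t > 0$ with $0 < \varepsilon < r_t$, obtaining $a_{r_t-\varepsilon} < t$. Pointwise convergence then gives $a^n_{r_t-\varepsilon} < t$ eventually, i.e.\ $r_t^n > r_t - \varepsilon$. The case $r_t = 0$ is trivial since $r_t^n \geq 0$ makes the lower bound automatic, and only the upper bound is needed. Combining the two bounds yields $r_t^n \to r_t$.
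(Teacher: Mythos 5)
Your proof is correct. Note that the paper itself states this lemma without proof, presenting it as a classical reminder, so there is no in-paper argument to compare against; your write-up supplies the standard complete argument. The two halves are handled properly: the upper bound $\limsup_n r_t^n \le r_t$ indeed needs only the monotonicity of $a$ and the definition of $r_t$ as an infimum, while the lower bound is exactly where the hypothesis $t\notin J$ enters, and your treatment of the delicate case $a_u = t$ for $u < r_t$ (showing $a$ would be constant equal to $t$ on $[u, r_t)$, forcing $r_{t_-}\le u < r_t$) is the right way to rule it out. The reduction of the inequalities $r_t^n < s$ and $r_t^n > s$ to $a_s^n > t$ and $a_s^n < t$ via the bijectivity of $a^n$, followed by pointwise convergence at the two test points $r_t\pm\varepsilon$, closes the argument, and the degenerate case $r_t = 0$ is correctly dispatched.
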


\noindent
This lemma is classical and we took the above statement in Fournier-Tardif \cite[Lemma 8]{fournier2018one}. Actually, the proof of this lemma is not different from that of the pointwise convergence of inverse distribution functions, used to show \textit{Skorokhod's representation theorem}, see for instance Billinglsey \cite[Chapter 5, Theorem 25.6]{proba_and_measure}.

\section{Main proofs}\label{proof}

From now on, we will always suppose at least that Assumptions \ref{assump_b_sigma} and \ref{assump_f} holds. In Subsection \ref{section_integrability}, we characterize the integrability of $f$ and $g'\sigma$. In Subsection \ref{scalefunction}, we represent the process $(X_{t/\epsilon})_{t\geq0}$ with a time-changed Brownian motion $(W_{\tau_t^{\epsilon}})_{t\geq0}$ which enables us to state that $\int_0^{t/\epsilon}f(X_s)\dr s$ is equal in law to $H_{\tau_t^{\epsilon}} = \int_0^{\tau_t^{\epsilon}}\phi_{\epsilon}(W_s)\dr s$.

\medskip
In Subsection \ref{levyregime}, we first check that $A_t^{\epsilon}$, inverse of $\tau_t^{\epsilon}$, converges to the local time at $0$ of $(W_t)_{t\geq0}$ so that $\tau_t^{\epsilon}$ converges towards $\tau_t$. We also slightly prepare the proof of the diffusive and critical diffusive regime. Then, as $\epsilon^{1/\alpha}\ell(1/\epsilon)\phi_{\epsilon}(x)\to\mathrm{sgn}_{f_+,f_-}(x)|x|^{1/\alpha - 2}$, we show that $H_{\tau_t^{\epsilon}}$ converges to $K_{\tau_t}$ a.s. for fixed a fixed $t$, which is enough to conclude. We will work a little bit more in the critical L\'evy regime.

\medskip
In Subsection \ref{diffusive_regime}, we adopt the martingale strategy writing $\int_0^{t/\epsilon}f(X_s)\dr s$ as a local martingale plus some remainder and we use classical central limit theorems for local martingales to conclude.

\subsection{Integrability of $f$ and $g'\sigma$}\label{section_integrability}

In this subsection, we characterize with the index $\alpha$ the integrability of $f$ and $g'\sigma$ with respect to the measure $\mu$. We first introduce the functions $\psi:\mathbb{R}\to(0,\infty)$ and $\phi:\mathbb{R}\to\mathbb{R}$ defined by
\begin{equation}\label{definition_phi_psi}
    \psi = \left(\sca'\circ \sca^{-1}\right)\times\left(\sigma\circ \sca^{-1}\right)\quad\text{and}\quad\phi = (f\circ \sca^{-1}) / \psi^2.
\end{equation}

\noindent
Assumption \ref{assump_f} precisely tells us that $\phi$ is a regular varying with index $1/\alpha - 2$. Indeed, since $\sca$ is an increasing bijection by Assumption \ref{assump_b_sigma} and Remark \ref{equivalence_harris}, we have
\begin{equation}\label{phi_asymp}
    |x|^{2 - 1/\alpha}\ell(|x|)\phi(x) \underset{x\to\pm\infty}{\longrightarrow}f_{\pm}.
\end{equation} 

\noindent
We make the following proposition and let us insist on the fact that from now on, we work under Assumptions \ref{assump_b_sigma} and \ref{assump_f}.

\begin{proposition}\label{integrability_f}
$f\in \mathrm{\bm{L}}^1(\mu)$ is equivalent to $\alpha > 1$ or $\alpha = 1$ and $\int_1^{\infty}\frac{\dr x}{x\ell(x)} < \infty$.
\end{proposition}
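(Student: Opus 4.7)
The plan is to reduce the integrability of $f$ against $\mu$ to the integrability of the function $\phi$ (defined in \eqref{definition_phi_psi}) against Lebesgue measure, and then read off the answer from the asymptotic behaviour \eqref{phi_asymp} together with Potter's bound.

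\medskip
First I would perform the change of variables $y = \sca(x)$ in the integral $\int_{\mathbb{R}}|f|\,\dr\mu = \kappa\int_{\mathbb{R}}|f(x)|[\sigma^{2}(x)\sca'(x)]^{-1}\dr x$; using $\dr y = \sca'(x)\dr x$ and the identity $\psi(y) = \sca'(\sca^{-1}(y))\sigma(\sca^{-1}(y))$, one gets
$$\int_{\mathbb{R}}|f|\,\dr \mu = \kappa\int_{\mathbb{R}}\frac{|f(\sca^{-1}(y))|}{\psi^{2}(y)}\dr y = \kappa\int_{\mathbb{R}}|\phi(y)|\dr y.$$
So everything reduces to the integrability of $\phi$ on $\mathbb{R}$. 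Since $f$ is locally bounded Borel and $\sca^{-1},\psi$ are continuous with $\psi>0$, the function $\phi$ is locally bounded Borel, so the only issue is integrability near $\pm\infty$.

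\medskip
Second, I would analyse the tails of $\phi$ using \eqref{phi_asymp}. Since $|f_+|+|f_-|>0$, assume without loss of generality $f_+\neq 0$ (the case $f_-\neq 0$ being symmetric). Then $|\phi(y)|\sim|f_+||y|^{1/\alpha-2}/\ell(|y|)$ as $y\to +\infty$, so the integrability of $\phi$ near $+\infty$ is equivalent to the integrability of $y\mapsto y^{1/\alpha-2}/\ell(y)$ near $+\infty$. Using Potter's bound (Lemma \ref{prop_slow_var}(ii)) with a small $\delta>0$, I get $c y^{1/\alpha-2-\delta}\le y^{1/\alpha-2}/\ell(y)\le C y^{1/\alpha-2+\delta}$ for large $y$. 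Hence:
\begin{itemize}[leftmargin=*]
    \item if $\alpha>1$, picking $\delta<1-1/\alpha$ gives $1/\alpha-2+\delta<-1$, so $\phi$ is integrable at $+\infty$;
    \item if $\alpha<1$, picking $\delta<1/\alpha-1$ gives $1/\alpha-2-\delta>-1$, so $\phi$ is not integrable at $+\infty$;
    \item if $\alpha=1$, the tail is $\sim |f_+|/(y\ell(y))$, whose integrability at $+\infty$ is precisely the condition $\int_1^{\infty}\dr y/(y\ell(y))<\infty$.
\end{itemize}

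\medskip
Finally I would combine the two sides. If both $f_+$ and $f_-$ are nonzero, the argument above applies verbatim at $\pm\infty$ and gives the stated equivalence. If, say, $f_-=0$, then \eqref{phi_asymp} yields $|\phi(y)|=o(|y|^{1/\alpha-2}/\ell(|y|))$ at $-\infty$, and in the ``good'' regimes ($\alpha>1$, or $\alpha=1$ with $\int\dr y/(y\ell(y))<\infty$) we still have $|\phi(y)|\le |y|^{1/\alpha-2}/\ell(|y|)$ for $|y|$ large, hence integrability at $-\infty$; conversely, the necessity of the conditions is already forced by the side where the limit is nonzero. No real obstacle here, the only small care needed is to apply Potter's bound correctly to absorb the slowly varying factor on each side.
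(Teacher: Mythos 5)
Your proof is correct and follows essentially the same route as the paper: reduce $\int|f|\,\dr\mu$ to $\kappa\int|\phi|\,\dr x$ by the substitution $y=\sca(x)$, then read off the tail integrability of $\phi$ from \eqref{phi_asymp} using standard bounds on the slowly varying factor (the paper invokes $x^{\pm\delta}\ell(x)\to\infty$ or $0$ where you invoke Potter's bound, which is equivalent for this purpose). Your treatment of the case where one of $f_{\pm}$ vanishes is a detail the paper leaves implicit, but there is no substantive difference.
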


\begin{proof}
If $\alpha>1$, then $2-1/\alpha > 1$. Since $\ell$ is a slowly varying function, we can find $\delta >0$ with $2-1/\alpha - \delta > 1$ such that $|x|^{\delta}\ell(|x|) \to \infty$ as $|x|\to\infty$. Thus we have by \eqref{phi_asymp} that
$$|\phi(x)| \underset{x\to\pm\infty}{=}o\left(|x|^{\delta + 1/\alpha - 2}\right)$$

\noindent
and the function $\phi$ is integrable with respect to the Lebesgue measure on $\mathbb{R}$. But using the substitution $x = \sca(v)$, we have
\begin{equation}\label{phitof}
    \int_{\mathbb{R}}|\phi(x)|\dr x = \int_{\mathbb{R}}\frac{|f(v)|}{\sigma^2(v)\sca'(v)}\dr v = \frac{1}{\kappa}\int_{\mathbb{R}}|f|\dr\mu < \infty.
\end{equation}

\noindent
If $\alpha = 1$ and $\int_1^{\infty}\frac{\dr x}{x\ell(x)} < \infty$, then \eqref{phi_asymp} tells us that $|x|\ell(|x|)\phi(x) \to f_{\pm}$, and thus $\phi$ is integrable with respect to the Lebesgue measure on $\mathbb{R}$ which tells us by \eqref{phitof} that $f$ is integrable with respect to the Lebesgue measure.

We can make a similar reasoning to show that $\alpha < 1$ or $\alpha = 1$ and $\int_1^{\infty}\frac{\dr x}{x\ell(x)} = \infty$ implies that $f\notin\mathrm{\bm{L}}^1(\mu)$.
\end{proof}

\noindent
We now focus on the function $g$ defined by \eqref{solution_Poisson} when $\alpha \geq 2$ and recall that
$$g(x) = 2\int_0^x \sca'(v)\int_v^{\infty}f(u)[\sigma^{2}(u)\sca'(u)]^{-1}\dr u\dr v = 2\kappa^{-1}\int_0^x \sca'(v)\mu(f\bm{1}_{(v,\infty)})\dr v.$$

\noindent
By Assumption \ref{assump_f}, Proposition \ref{integrability_f} and since $\alpha \geq 2$, we have $\mu(f) = 0$ and thus $\mu(f\bm{1}_{(v,\infty)}) = -\mu(f\bm{1}_{(-\infty,v)})$ so that
\begin{equation}\label{g_moins_infty}
    g(x) = -2\int_0^x \sca'(v)\int_{-\infty}^{v}f(u)[\sigma^{2}(u)\sca'(u)]^{-1}\dr u\dr v.
\end{equation}

\noindent
We now express the function $g\circ\sca^{-1}$, using the substitutions $v=\sca^{-1}(y)$ and $u = \sca^{-1}(z)$:
\begin{equation}\label{g_s-1}
    g(\sca^{-1}(x)) = 2\int_0^x \int_y^{\infty}f(\sca^{-1}(z))[\sigma(\sca^{-1}(z))\sca'(\sca^{-1}(z))]^{-2}\dr z\dr y = 2\int_0^x \int_y^{\infty}\phi(z)\dr z \dr y.
\end{equation}

\noindent
Using \eqref{g_moins_infty}, we also get
\begin{equation}\label{g_s-1_-inf}
    g(\sca^{-1}(x)) = -2\int_0^x \int_{-\infty}^{y}\phi(z)\dr z\dr y.
\end{equation}

\noindent
We are now ready to state the proposition characterizing the integrability of $g'\sigma$ with respect to $\mu$.

\begin{proposition}\label{g'sigma}
If $\alpha > 2$, or $\alpha = 2$ and $\rho =  \int_1^{\infty}\left(\int_x^{\infty}\frac{\dr v}{v^{3/2}\ell(v)}\right)^2\dr x < \infty$, then $g'\sigma\in \mathrm{\bm{L}}^{2}(\mu)$.
\end{proposition}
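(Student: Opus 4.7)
The plan is to reduce $\|g'\sigma\|^2_{\mathrm{\bm{L}}^2(\mu)}$ to an explicit scalar integral in $\phi$, then analyze its tails via Karamata's theorem. Differentiating \eqref{solution_Poisson} yields $g'(v)=2\sca'(v)\int_v^{\infty}f(u)[\sigma^2(u)\sca'(u)]^{-1}\dr u$, which by the substitution $u=\sca^{-1}(z)$ becomes $g'(v)=2\sca'(v)\int_{\sca(v)}^{\infty}\phi(z)\dr z$. Integrating $(g'\sigma)^2$ against $\dr\mu=\kappa[\sigma^2\sca']^{-1}\dr x$, the prefactor $[\sca']^2\sigma^2/[\sigma^2\sca']$ collapses to $\sca'$, and the change of variable $y=\sca(v)$ then gives
$$\int_{\mathbb{R}}(g'\sigma)^2\dr\mu=4\kappa\int_{\mathbb{R}}\Big(\int_y^{\infty}\phi(z)\dr z\Big)^2\dr y,$$
which is exactly the formula defining $\sigma_\alpha^2$ in Theorem~\ref{main_theorem}; the task thus reduces to proving this integral is finite.

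Next, I would note that $\phi$ is locally bounded (since $f$ is locally bounded and $\sigma\sca'$ is continuous and strictly positive) and, by \eqref{phi_asymp}, satisfies $\phi(z)\sim f_\pm |z|^{1/\alpha-2}/\ell(|z|)$ as $z\to\pm\infty$. For $\alpha\ge 2$ this decay is integrable at both infinities, so $y\mapsto\int_y^{\infty}\phi$ is continuous and bounded on every compact, and the finiteness question is purely one about the tails at $\pm\infty$. Karamata's theorem applied at $+\infty$ gives
$$\int_y^{\infty}\phi(z)\dr z\underset{y\to+\infty}{\sim}\frac{\alpha f_+}{\alpha-1}\,\frac{y^{1/\alpha-1}}{\ell(y)}.$$
At $-\infty$, Proposition~\ref{integrability_f} ensures $f\in\mathrm{\bm{L}}^1(\mu)$ when $\alpha\ge 2$, so Assumption~\ref{assump_f} forces $\int_{\mathbb{R}}\phi=\kappa^{-1}\mu(f)=0$; hence $\int_y^{\infty}\phi=-\int_{-\infty}^y\phi$, and the analogous Karamata estimate with $f_-$ controls the left tail.

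Squaring, the integrand behaves like $C_\pm|y|^{2/\alpha-2}/\ell^2(|y|)$ at $\pm\infty$. For $\alpha>2$ the exponent is strictly less than $-1$ and integrability is immediate regardless of $\ell$. The critical case $\alpha=2$ is the main obstacle and is where the hypothesis $\rho<\infty$ enters: the exponent then equals exactly $-1$, so the tail behaves like $4f_\pm^2/(|y|\ell^2(|y|))$. A second application of Karamata to $\int_x^{\infty}v^{-3/2}/\ell(v)\dr v\sim 2x^{-1/2}/\ell(x)$ gives $\rho\sim 4\int_1^{\infty}\dr x/(x\ell^2(x))$, so the tails of $\big(\int_y^{\infty}\phi\big)^2$ are integrable precisely under the stated hypothesis $\rho<\infty$. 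The only delicate point is matching the two Karamata asymptotics carefully so that the integrability of the tail of $(\int_y^{\infty}\phi)^2$ is shown equivalent to the finiteness of $\rho$.
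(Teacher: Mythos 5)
Your argument is correct and follows essentially the same route as the paper: differentiate $g$, pass to the scale coordinate so that $\|g'\sigma\|^2_{\mathrm{\bm{L}}^2(\mu)}=4\kappa\int_{\mathbb{R}}\bigl(\int_y^{\infty}\phi\bigr)^2\dr y$, use the regular variation of $\phi$ from \eqref{phi_asymp} (together with $\int_{\mathbb{R}}\phi=0$ to control the left tail), and check tail integrability, with $\rho<\infty$ handling the critical case $\alpha=2$. The only cosmetic differences are that you invoke Karamata explicitly where the paper uses a cruder $o(|x|^{\delta+1/\alpha-2})$ bound for $\alpha>2$, and you add an extra (unnecessary but harmless) Karamata step identifying $\rho$ with $4\int_1^{\infty}\dr x/(x\ell^2(x))$, where the paper compares the tail of $(g\circ\sca^{-1})'$ directly to $\int_x^{\infty}\dr v/(v^{3/2}\ell(v))$.
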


\begin{proof}
From \eqref{g_s-1} and \eqref{g_s-1_-inf}, we get that
\begin{align}\label{g's_-1}
    (g\circ \sca^{-1})'(x) & = 2\int_x^{\infty}\phi(v)\dr v = -2\int_{-\infty}^{x}\phi(v)\dr v.
\end{align}

\noindent
Assume $\alpha > 2$, then $2 - 1/\alpha > 3/2$. From Assumption \ref{assump_f} and since $\ell$ is a slowly varying function, we can find $\delta>0$ such that $2 - 1/\alpha - \delta> 3/2$ and
$$|\phi(x)| \underset{x\to\pm\infty}{=}o\left(|x|^{\delta + 1/\alpha - 2}\right),$$

\noindent
and thus
$$|(g\circ\sca^{-1})'(x)|^2\underset{x\to\pm\infty}{=}o\left(|x|^{2(\delta + 1/\alpha - 1)}\right).$$

\noindent
Since $2(\delta + 1/\alpha - 1) < -1$, $(g\circ \sca^{-1})' \in \mathrm{\bm{L}}^{2}(\dr x)$. Now using the fact that $(g\circ \sca^{-1})' = [\sca'\circ \sca^{-1}]^{-1}\times g'\circ \sca^{-1}$ and the substitution $x = \sca(v)$, we have
$$\int_{\mathbb{R}}\frac{|g'(\sca^{-1}(x))|^{2}}{[\sca'(\sca^{-1}(x))]^{2}}\dr x = \int_{\mathbb{R}}\frac{|g'(v)|^{2}}{\sca'(v)}\dr v = \frac{1}{\kappa}\int_{\mathbb{R}}|g'\sigma|^{2}\dr\mu,$$

\noindent
whence the result. Now assume $\alpha = 2$ and $\rho = \int_1^{\infty}\left(\int_x^{\infty}\frac{\dr v}{v^{3/2}\ell(v)}\right)^2\dr x < \infty$. Assumption \ref{assump_f} tells us that $\phi(x) \underset{x\to\infty}{\sim}f_{\pm}\frac{1}{|x|^{3/2}\ell(|x|)}$. Integrating and using \eqref{g's_-1}, we get
$$(g\circ\sca^{-1})'(x)\underset{x\to\infty}{\sim}2f_{+}\int_x^{\infty}\frac{\dr v}{v^{3/2}\ell(v)}.$$

\noindent
Hence the fact that $\rho < \infty$ tells us precisely that $(g\circ \sca^{-1})' \in \mathrm{\bm{L}}^{2}(\mathbb{R}_+, \dr x)$. Using a similar computation involving \eqref{g's_-1}, we find that $(g\circ \sca^{-1})' \in \mathrm{\bm{L}}^{2}(\mathbb{R}_-, \dr x)$. We conclude that $g'\sigma\in\mathrm{\bm{L}}^{2}(\mu)$ as when $\alpha > 2$.
\end{proof}

\subsection{Scale function and speed measure}\label{scalefunction}

In this subsection, we classically represent the solution $(X_t)_{t\geq0}$ to \eqref{SDE} as a function of a time-changed Brownian motion. Roughly, the process $(\sca(X_t))_{t\geq0}$ is a continuous local martingale and the Dubins-Schwarz theorem tells us $(\sca(X_t))_{t\geq0}$ is a time-changed Brownian motion. The next lemma enables us to represent $X_{t/\epsilon}$ and $\int_0^{t/\epsilon}f(X_s)\dr s$ through the functions $\psi$, $\phi$, defined by \eqref{definition_phi_psi}, and $\sca$, generalizing to our context the identity found in Fournier-Tardif \cite[Lemma 6]{fournier2018one}.

\begin{lemma}\label{first_lemma}
Let $\epsilon >0$ and $a_{\epsilon} >0$. We consider a Brownian motion $(W_t)_{t\geq 0}$ and we define

$$A_t^{\epsilon} = \epsilon a_{\epsilon}^{-2}\int_0^t \psi^{-2}\left(W_s/a_{\epsilon}\right)\dr s$$

\noindent
and its inverse $(\tau_t^{\epsilon})_{t\geq 0}$,
which is a.s. a continuous and strictly increasing bijection from $[0,+\infty)$ in itself. Let us set
$$X_t^{\epsilon} = \sca^{-1}\left(W_{\tau_t^{\epsilon}}/a_{\epsilon}\right)\quad \text{and}\quad F_t^{\epsilon} = H_{\tau_t^{\epsilon}}^{\epsilon} \quad\text{where}\quad H_t^{\epsilon} = a_{\epsilon}^{-2}\int_0^t \phi\left(W_s/a_{\epsilon}\right)\dr s.$$

\noindent
For $(X_t)_{t\geq 0}$ the unique solution to \eqref{SDE}, we have
$$\left(\int_0^{t/\epsilon}f(X_s)\dr s, X_{t/\epsilon}\right)_{t\geq0} \overset{d}{=} \left(F_t^{\epsilon}, X_t^{\epsilon}\right)_{t\geq0}.$$
\end{lemma}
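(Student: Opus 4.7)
The plan is to use the classical Dubins--Schwarz representation of $(\sca(X_t))_{t\geq0}$ and then a linear rescaling of the resulting Brownian motion to bring in the parameter $a_\epsilon$.

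First I would apply Itô's formula to $\sca(X_t)$. Since $\sca$ is $C^2$ and satisfies $\tfrac12\sca''\sigma^2 + \sca' b = 0$, the drift vanishes and $\sca(X_t) = \int_0^t (\sca'\sigma)(X_s)\,\dr B_s$ is a continuous local martingale with quadratic variation $\eta_t := \int_0^t [(\sca'\sigma)(X_s)]^2\,\dr s$. Because $\sigma>0$ and $\sca'>0$ everywhere, $\eta$ is strictly increasing and continuous; and since $(X_t)$ is positive recurrent, $\sca(X_t)$ visits every level, which forces $\eta_t\to\infty$ a.s. Denote its inverse by $\rho$. Dubins--Schwarz yields a Brownian motion $\beta$ (on a possibly enlarged space) such that $\sca(X_t) = \beta_{\eta_t}$, hence $X_t = \sca^{-1}(\beta_{\eta_t})$, and plugging back gives the useful identity
\begin{equation*}
\eta_t = \int_0^t \psi^2(\beta_{\eta_s})\,\dr s, \qquad \rho_u = \int_0^u \psi^{-2}(\beta_v)\,\dr v,
\end{equation*}
via the substitution $v=\eta_s$, using $\psi = (\sca'\circ\sca^{-1})(\sigma\circ\sca^{-1})$.

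Next I would introduce the rescaled Brownian motion $W_s := a_\epsilon\,\beta_{s/a_\epsilon^2}$ (standard by Brownian scaling), so that $\beta_u = W_{a_\epsilon^2 u}/a_\epsilon$. Setting $\tau_t^\epsilon := a_\epsilon^2\eta_{t/\epsilon}$ immediately gives $X_{t/\epsilon} = \sca^{-1}(W_{\tau_t^\epsilon}/a_\epsilon) = X_t^\epsilon$. To identify $\tau_t^\epsilon$ as the inverse of $A_t^\epsilon$, I invert: $\tau_t^\epsilon = u$ is equivalent to $t = \epsilon\rho_{u/a_\epsilon^2}$, and the substitution $v = w/a_\epsilon^2$ in the formula for $\rho$ gives
\begin{equation*}
\epsilon\,\rho_{u/a_\epsilon^2} = \epsilon a_\epsilon^{-2}\int_0^u \psi^{-2}(W_w/a_\epsilon)\,\dr w = A_u^\epsilon,
\end{equation*}
so $\tau^\epsilon$ and $A^\epsilon$ are mutual inverses, and $A^\epsilon$ is indeed a continuous strictly increasing bijection of $[0,\infty)$.

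For the additive functional, I would change variable $s=\rho_v$ in $\int_0^{t/\epsilon} f(X_s)\,\dr s$; using $\dr s = \psi^{-2}(\beta_v)\,\dr v$ and $f(X_s) = (f\circ\sca^{-1})(\beta_v)$, this becomes $\int_0^{\eta_{t/\epsilon}} \phi(\beta_v)\,\dr v$ by definition of $\phi$. A further substitution $v = w/a_\epsilon^2$ rewrites this as $a_\epsilon^{-2}\int_0^{\tau_t^\epsilon} \phi(W_w/a_\epsilon)\,\dr w = H_{\tau_t^\epsilon}^\epsilon = F_t^\epsilon$. Hence on the space carrying $(X,\beta)$ we have the almost sure pathwise identity $(\int_0^{t/\epsilon}f(X_s)\,\dr s,X_{t/\epsilon})_{t\geq0} = (F_t^\epsilon,X_t^\epsilon)_{t\geq0}$ once we rebuild the processes from $W$. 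The claimed equality in law then follows because the uniqueness in law for \eqref{SDE} (Assumption \ref{assump_b_sigma} and Kallenberg's result cited after \eqref{SDE}) guarantees that the joint law of $(X_{t/\epsilon},\int_0^{t/\epsilon}f(X_s)\dr s)$ does not depend on the chosen driving Brownian motion, and the construction above produces a weak solution of \eqref{SDE} from the given $W$.

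The only genuinely delicate point is justifying that $\eta$ (equivalently $A^\epsilon$) is a continuous strictly increasing bijection onto $[0,\infty)$, so that the inverses and substitutions above are legitimate; this uses crucially that $\sigma>0$ and that Harris positive recurrence forces $\sca(X_t)$ (and hence its Dubins--Schwarz Brownian motion $\beta$) to visit every real level, making $\eta_t\to\infty$. Everything else reduces to the two successive changes of variable $v=\eta_s$ and $v = w/a_\epsilon^2$ together with the scaling of $\beta$.
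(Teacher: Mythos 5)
Your proposal is correct, and it runs the argument in the opposite direction from the paper. The paper starts from the given Brownian motion $W$, sets $Y^{\epsilon}_t=W_{\tau^{\epsilon}_t}$ (a weak solution of $\dr Y=\psi_{\epsilon}(Y)\dr B^{\epsilon}$), applies It\^o's formula to $\sca^{-1}(Y^{\epsilon}_t/a_{\epsilon})$ to check that $X^{\epsilon}$ solves the same SDE as $(X_{t/\epsilon})$, and then invokes uniqueness in law to identify the two laws before performing the change of variables $u=\tau^{\epsilon}_s$ for the additive functional. You instead start from $X$, turn $\sca(X)$ into a Brownian motion via Dubins--Schwarz, rescale it by $a_{\epsilon}$, and obtain the identity \emph{pathwise} on the space carrying $X$; the equality in law is then read off because $(F^{\epsilon},X^{\epsilon})$ is the same measurable functional of any Brownian motion. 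The two proofs use the same two substitutions and the same algebra ($\varphi_{\epsilon}'\psi_{\epsilon}=\epsilon^{-1/2}\sigma\circ\varphi_{\epsilon}$, etc.), so the computational content is identical; what your route buys is an almost sure coupling rather than a mere equality in law, at the cost of having to justify Dubins--Schwarz, i.e.\ that $\eta_{\infty}=\infty$ a.s. --- which you correctly deduce from Harris recurrence and the fact that $\sca$ maps $\mathbb{R}$ onto $\mathbb{R}$ (Remark~\ref{equivalence_harris}). What the paper's route buys is that it never needs the time-change representation of $X$ itself, only the elementary facts that $\psi^{-2}$ is locally bounded and bounded below on compacts, which make $A^{\epsilon}$ a continuous increasing bijection of $[0,\infty)$ directly. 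Both are complete proofs.
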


We introduce the unusual degree of freedom $a_{\epsilon}$ that will allow us to make converge the time-change $A_t^{\epsilon}$ without normalization.

\begin{proof}
We recall that the function $\psi$ and $\phi$ are defined by $\psi = \left(\sca'\circ \sca^{-1}\right)\times\left(\sigma\circ \sca^{-1}\right)$ and $\phi = (f\circ \sca^{-1}) / \psi^2$. Let us consider the function $\psi_{\epsilon}(w) = \epsilon^{-1/2} a_{\epsilon}\psi(w/a_{\epsilon})$, so that $A_t^{\epsilon} = \int_0^t \psi_{\epsilon}^{-2}(W_s)\dr s$. We set $Y_t^{\epsilon} = W_{\tau_t^{\epsilon}}$ which classicaly solves, see \cite[page 452]{kallenberg2006foundations},
$$Y_t^{\epsilon} = \int_0^t \psi_{\epsilon} \left(Y_s^{\epsilon}\right) \dr B_s^{\epsilon}$$

\noindent
where $(B_t^{\epsilon})_{t\geq 0}$ is a Brownian motion. Next, we consider the function $\varphi_{\epsilon}(y) = \sca^{-1}(y / a_{\epsilon})$. We can apply Itô's formula to  $X_t^{\epsilon} = \varphi_{\epsilon}(Y_t^{\epsilon})$ since $\sca^{-1}$ is $C^2$ and we get
$$X_t^{\epsilon} = \int_0^t \varphi_{\epsilon}' (Y_s^{\epsilon})\psi_{\epsilon} \left(Y_s^{\epsilon}\right)\dr B_s^{\epsilon} +  \frac{1}{2}\int_0^t \varphi_{\epsilon}'' (Y_s^{\epsilon})\psi_{\epsilon}^2 \left(Y_s^{\epsilon}\right) \dr s.$$

\noindent
The functions are such that:
$$\varphi_{\epsilon}'(y)\psi_{\epsilon} (y) = \frac{1}{a_{\epsilon}\sca'(\sca^{-1}(y / a_{\epsilon}))}\times \frac{a_{\epsilon}\psi(y / a_{\epsilon})}{\epsilon^{1/2}} = \frac{\sigma(\sca^{-1}(y/a_{\epsilon}))}{\epsilon^{1 / 2}} = \frac{\sigma(\varphi_{\epsilon}(y))}{\epsilon^{1/2}} $$

\noindent
and,
$$\varphi_{\epsilon}''(y)\psi_{\epsilon}^2 (y) = \frac{-1}{a_{\epsilon}^2}\frac{\sca''(\sca^{-1}(y / a_{\epsilon}))}{[\sca'(\sca^{-1}(y / a_{\epsilon}))]^3} \times \frac{a_{\epsilon}^2\psi^2(y / a_{\epsilon})}{\epsilon} = \frac{2b(\sca^{-1}(y / a_{\epsilon}))}{\epsilon} = \frac{2b(\varphi_{\epsilon}(y))}{\epsilon}.$$

\noindent
Indeed remember that $\sca$ satisfies $\frac{1}{2}\sca''\sigma^2 + \sca'b = 0$. Finally we have
$$X_t^{\epsilon} = \epsilon^{-1}\int_0^t b(X_s^{\epsilon})\dr s + \epsilon^{-1 / 2}\int_0^t\sigma(X_s^{\epsilon})\dr B_s^{\epsilon} $$

\noindent
Now taking equation \eqref{SDE}, we write 
$$X_{t/\epsilon} = \int_0^{t/\epsilon} b(X_s)\dr s + \int_0^{t/\epsilon}\sigma(X_s)\dr B_s = \epsilon^{-1}\int_0^t b(X_{s/\epsilon})\dr s + \epsilon^{-1 / 2}\int_0^t\sigma(X_{s/\epsilon})\dr \Hat{B}_s^{\epsilon},$$

\noindent
where $(\Hat{B}_t^{\epsilon})_{t\geq0} = (\epsilon^{1 / 2}B_{t/\epsilon})_{t\geq0}$ is a Brownian motion. Thus, $\forall\epsilon>0$, the processes $(X_{t/\epsilon})_{t\geq0}$ and $(X_t^{\epsilon})_{t\geq0}$ are solutions of the same SDE, for which we have uniqueness in law, driven by different Brownian motion, $(\Hat{B}_t^{\epsilon})_{t\geq 0}$ and $(B_t^{\epsilon})_{t\geq 0}$. Thus they are equal in law : $(X_{t/\epsilon})_{t\geq0} \overset{d}{=}(X_t^{\epsilon})_{t\geq0}$. As $\int_0^{t/\epsilon}f(X_s)\dr s = \epsilon^{-1}\int_0^{t}f(X_{s/\epsilon}) \dr s$, we have 
$$\left(\int_0^{t/\epsilon}f(X_s)\dr s, X_{t/\epsilon}\right)_{t\geq0} \overset{d}{=} \left(\epsilon^{-1}\int_0^{t}f(X_{s}^{\epsilon}) \dr s, X_t^{\epsilon}\right)_{t\geq0}.$$ 

\noindent
Using the substitution $u = \tau_s^{\epsilon} \Leftrightarrow s = A_u^{\epsilon}$, we have $\dr s = \psi_{\epsilon}^{-2}(W_u)
\dr u$, and
$$\epsilon^{-1}\int_0^{t}f(X_{s}^{\epsilon})\dr s = \epsilon^{-1}\int_0^t f\circ\varphi_{\epsilon}\left(W_{\tau_s^{\epsilon}}\right)\dr s = \epsilon^{-1} \int_0^{\tau_t^{\epsilon}}\frac{f\circ\varphi_{\epsilon}(W_u)}{\psi_{\epsilon}^2(W_u)}\dr u = a_{\epsilon}^{-2} \int_0^{\tau_t^{\epsilon}}\phi(W_u / a_{\epsilon})\dr u = F_t^{\epsilon}$$

\noindent
which ends the proof.
\end{proof}

\noindent
We end this section with the following proposition that we will use several times later on.

\begin{proposition}\label{conv_local}
Let $\epsilon>0$ and $a_{\epsilon} > 0$ such that $a_{\epsilon}\to0$ when $\epsilon\to0$. Let also $(W_t)_{t\geq0}$ be a Brownian motion and $(L_t^0)_{t\geq0}$ its local time at $0$. For every $\varphi\in \mathrm{\bm{L}}^1(\mu)$, we have a.s., for all $T>0$,
$$\sup_{t\in [0, T]}\left|\kappa a_{\epsilon}^{-1}\int_0^t \frac{\varphi\circ \sca^{-1}(W_s / a_{\epsilon})}{\psi^2(W_s/a_{\epsilon})}\dr s - \mu(\varphi)L_t^0\right| \underset{\epsilon\to0}{\longrightarrow} 0.$$

\end{proposition}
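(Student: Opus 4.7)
The plan is to combine the occupation times formula with the a.s. uniform Hölder regularity of Brownian local times in the space variable, recalled in \eqref{holder_local_times}, to pass to the limit $a_\epsilon\to0$. First, applying the occupation times formula and then substituting $y = x/a_\epsilon$ rewrites
$$\kappa a_\epsilon^{-1}\int_0^t \frac{\varphi\circ\sca^{-1}(W_s/a_\epsilon)}{\psi^2(W_s/a_\epsilon)}\dr s = \kappa\int_{\mathbb{R}}\frac{\varphi\circ\sca^{-1}(y)}{\psi^2(y)} L_t^{a_\epsilon y}\dr y.$$
The substitution $v = \sca^{-1}(y)$, together with $\psi^2(y) = [\sca'(v)\sigma(v)]^2$ and $\dr y = \sca'(v)\dr v$, shows that
$$\int_{\mathbb{R}}\frac{\varphi\circ\sca^{-1}(y)}{\psi^2(y)}\dr y = \int_{\mathbb{R}}\varphi(v)m(v)\dr v = \kappa^{-1}\mu(\varphi),$$
so the target $\mu(\varphi)L_t^0$ is exactly $\kappa L_t^0\int_{\mathbb{R}}(\varphi\circ\sca^{-1})/\psi^2\,\dr y$. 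Consequently, it suffices to control, uniformly in $t\in[0,T]$,
$$\Delta_\epsilon(t) := \kappa\int_{\mathbb{R}}\frac{\varphi\circ\sca^{-1}(y)}{\psi^2(y)}\bigl(L_t^{a_\epsilon y} - L_t^0\bigr)\dr y.$$

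To bound $\sup_{t\in[0,T]}|\Delta_\epsilon(t)|$, I would pick $R_\epsilon\to\infty$ with $a_\epsilon R_\epsilon\to 0$ (for example $R_\epsilon = a_\epsilon^{-1/2}$) and split the integral at $|y| = R_\epsilon$. On $\{|y|\leq R_\epsilon\}$, \eqref{holder_local_times} furnishes some $\theta\in(0,1/2)$ and a random constant $C_T(\omega)$ with $\sup_{t\leq T}|L_t^{a_\epsilon y} - L_t^0|\leq C_T|a_\epsilon y|^\theta\leq C_T(a_\epsilon R_\epsilon)^\theta$, so this part of $\Delta_\epsilon(t)$ is bounded, uniformly in $t\leq T$, by $C_T(a_\epsilon R_\epsilon)^\theta\mu(|\varphi|)$, which vanishes. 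On $\{|y|>R_\epsilon\}$, I use the a.s. bound $L_T^* := \sup_{(s,x)\in[0,T]\times\mathbb{R}}L_s^x < \infty$ (valid because $x\mapsto L_T^x$ is continuous and vanishes outside $[-\sup_{s\leq T}|W_s|,\sup_{s\leq T}|W_s|]$) to dominate this piece by $2\kappa L_T^*\int_{|y|>R_\epsilon}\frac{|\varphi\circ\sca^{-1}(y)|}{\psi^2(y)}\dr y$, which tends to $0$ because $\varphi\in\mathrm{\bm{L}}^1(\mu)$ makes the integrand Lebesgue-integrable on $\mathbb{R}$ (same change of variables as above).

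Summing the two contributions gives $\sup_{t\in[0,T]}|\Delta_\epsilon(t)|\to 0$ a.s., which is the claim. The only delicate point is the simultaneous requirement $R_\epsilon\to\infty$ (to handle the tail of $(\varphi\circ\sca^{-1})/\psi^2$) together with $a_\epsilon R_\epsilon\to 0$ (so the Hölder bound is effective); this is trivially arranged. The uniformity in $t$ comes for free since the Hölder estimate \eqref{holder_local_times} is uniform on $[0,T]\times\mathbb{R}$ and $L_T^*$ is finite a.s., so no further work is needed to upgrade a pointwise convergence into a uniform one.
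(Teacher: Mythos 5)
Your proof is correct, and its first half is identical to the paper's: both apply the occupation times formula and the change of variables $x = a_\epsilon\sca(v)$ to rewrite the quantity of interest as $\int_{\mathbb{R}}\varphi(v)\,L_t^{a_\epsilon\sca(v)}\,\mu(\dr v)$, reducing everything to showing that $\int_{\mathbb{R}}|\varphi(v)|\sup_{[0,T]}\bigl|L_t^{a_\epsilon\sca(v)}-L_t^0\bigr|\,\mu(\dr v)\to0$ a.s. You differ only in how this last limit is taken. The paper invokes dominated convergence: for each fixed $v$, $\sup_{[0,T]}|L_t^{a_\epsilon\sca(v)}-L_t^0|\to0$ a.s.\ by joint continuity of local times, and the integrand is dominated by $2\bigl(\sup_{[0,T]\times\mathbb{R}}L_t^x\bigr)|\varphi(v)|$, which is $\mu$-integrable. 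You instead truncate at a level $R_\epsilon$ with $R_\epsilon\to\infty$ and $a_\epsilon R_\epsilon\to0$, apply the quantitative H\"older estimate \eqref{holder_local_times} on the inner region, and keep the crude bound $\sup_{[0,T]\times\mathbb{R}}L_s^x<\infty$ only for the tail, which vanishes because $(\varphi\circ\sca^{-1})/\psi^2$ is Lebesgue-integrable. Your route uses a strictly stronger input (H\"older regularity of $x\mapsto L_t^x$ rather than mere continuity), but in exchange it yields an explicit rate, namely $O\bigl((a_\epsilon R_\epsilon)^\theta\bigr)$ plus the tail of $\mu(|\varphi|)$; both ingredients are recalled in Subsection \ref{section_local_times}, so either argument is admissible. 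The paper does deploy the H\"older estimate in exactly this spirit in the proof of Lemma \ref{lemma_time_change}-(iii), where a rate strong enough to beat a slowly varying factor is actually required.
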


\begin{proof}
Denote $Y_t^{\epsilon} = \kappa a_{\epsilon}^{-1}\int_0^t \frac{\varphi\circ \sca^{-1}(W_s / a_{\epsilon})}{\psi^2(W_s/a_{\epsilon})}\dr s$. Using the occupation times formula, we get
$$Y_t^{\epsilon} = \kappa a_{\epsilon}^{-1}\int_{\mathbb{R}} \frac{\varphi\circ \sca^{-1}(x / a_{\epsilon})}{\psi^2(x/a_{\epsilon})}L_t^x \dr x =  \kappa\int_{\mathbb{R}}\frac{\varphi(v)\sca'(v)}{\psi^2(\sca(v))}L_t^{a_{\epsilon}\sca(v)}\dr v = \int_{\mathbb{R}}\varphi(v)L_t^{a_{\epsilon}\sca(v)}\mu(\dr v),$$

\noindent
where we used the substitution $x = a_{\epsilon}\sca(v)$. We recall that $\mu(\dr x) = \kappa[\sigma^2(x)\sca'(x)]^{-1}\dr x$ and that $\psi = (\sca'\circ \sca^{-1})\times(\sigma\circ \sca^{-1})$. Hence we have
$$\sup_{t\in [0, T]}\left|Y_t^{\epsilon} - \mu(\varphi)L_t^0\right| \leq \int_{\mathbb{R}}|\varphi(v)|\sup_{t\in [0, T]}\left|L_t^{a_{\epsilon}\sca(v)}-L_t^0\right|\mu(\dr v).$$

\noindent
But for all $v\in\mathbb{R}$, $\sup_{t\in [0, T]}|L_t^{a_{\epsilon}\sca(v)} - L_t^0| \underset{\epsilon\to 0}{\longrightarrow} 0$ a.s. (see \cite{revuz2013continuous} Corollary 1.8, page 226), and $\sup_{t\in[0, T]}|L_t^{a_{\epsilon}\sca(v)} - L_t^0| \leq 2 \sup_{(t,x)\in[0, T]\times\mathbb{R}}L_t^x < \infty$ a.s.. We can conclude by dominated convergence since $\varphi\in\mathrm{\bm{L}}^1(\mu)$ by assumption.
\end{proof}

\subsection{The L\'evy regime}\label{levyregime}

In this section, which generalizes \cite[Lemma 9]{fournier2018one}, we first show two lemmas, that will enable us to conclude. The first lemma says that if in Lemma \ref{first_lemma} we choose carefully $a_{\epsilon}$, the time change $A_t^{\epsilon}$ converges to the local time of the Brownian motion at 0. It also recovers a weak version of the ergodic theorem and prepares some useful results for the diffusive case.

\begin{lemma}\label{lemma_time_change}
Let $(W_t)_{t\geq0}$ be a Brownian motion and $(L_t^0)_{t\geq0}$ its local time at 0. For all $\epsilon > 0$, we consider the processes $(A_t^{\epsilon})_{t\geq0}$, $(\tau_t^{\epsilon})_{t\geq0}$ and $(X_t^{\epsilon})_{t\geq0}$ from Lemma \ref{first_lemma} with the choice $a_{\epsilon} = \epsilon/\kappa$.

\begin{enumerate}[label=(\roman*)]
    \item We have a.s., for all $T > 0$, $$ \sup_{t\in[0, T]}\left|A_t^{\epsilon} - L_t^0\right| \underset{\epsilon\to0}{\longrightarrow} 0.$$
    
    \item For all $t > 0$, a.s., $\tau_t^{\epsilon} \underset{\epsilon\to0}{\longrightarrow} \tau_t$, the generalized inverse of $(L_t^0)_{t\geq 0}$.
    
    \item Suppose Assumption \ref{assump_mu_integ}. For any slowly varying function $\gamma$,  we have a.s., for all $T > 0$, $$ \sup_{t\in[0, T]}\gamma(1/\epsilon)\left|A_t^{\epsilon} - L_t^0\right| \underset{\epsilon\to0}{\longrightarrow} 0.$$
    
    \item If $\alpha = 2$ and $\rho = \int_1^{\infty}\left(\int_x^{\infty}\frac{\dr v}{v^{3/2}\ell(v)}\right)^2\dr x = \infty$, we have a.s., for all $T > 0$,
    $$\sup_{t\in[0, T]}\left|T_t^{\epsilon} - \sigma_{2}^2 L_t^0\right| \underset{\epsilon\to0}{\longrightarrow} 0 \quad\text{where}\quad T_t^{\epsilon} = \kappa^2 |\epsilon\rho_{\epsilon}|^{-1}\int_0^t \chi(W_s / a_{\epsilon})\dr s,$$
    \noindent
    where $\chi = [(g\circ \sca^{-1})']^{2}$ with $g$ defined by \eqref{solution_Poisson} and $\rho_{\epsilon} = \int_1^{1/\epsilon}\left(\int_x^{\infty}\frac{\dr v}{v^{3/2}\ell(v)}\right)^2\dr x$.
    
    \item For all $\varphi\in \mathrm{\bm{L}}^1(\mu)$, for all $t > 0$, we have a.s.
    $$\int_0^t\varphi(X_s^{\epsilon})\dr s \underset{\epsilon\to0}{\longrightarrow} \mu(\varphi)t.$$
\end{enumerate}
\end{lemma}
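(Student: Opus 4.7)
The proof splits into five parts matching (i)--(v). The specific choice $a_\epsilon = \epsilon/\kappa$ makes $\epsilon a_\epsilon^{-2} = \kappa a_\epsilon^{-1}$, so $A_t^\epsilon$ is precisely the process $Y_t^\epsilon$ of Proposition \ref{conv_local} applied with $\varphi \equiv 1$. Since $\mu(1)=1$, this yields (i). For (ii), $t \mapsto A_t^\epsilon$ is a.s. a continuous, strictly increasing bijection from $[0,\infty)$ onto itself (as $\psi > 0$), and by (i) it converges pointwise to the nondecreasing $(L_t^0)_{t\geq 0}$, which tends to $+\infty$ a.s. Lemma \ref{inverse_time} then yields $\tau_t^\epsilon \to \tau_t$ for every $t$ outside the jump set $J = \{s > 0 : \tau_{s-} < \tau_s\}$, and since $\mathbb{P}(t \in J) = 0$ for each fixed $t$ (Section \ref{section_local_times}), point (ii) follows.

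For (iii), the proof of Proposition \ref{conv_local} actually yields the sharper bound
\[
\sup_{[0,T]}\bigl|A_t^\epsilon - L_t^0\bigr| \leq \int_\mathbb{R} \sup_{[0,T]}\bigl|L_t^{a_\epsilon \sca(v)} - L_t^0\bigr|\,\mu(dv).
\]
Fix $\theta \in (0, 1/2)$ with $\theta \leq \lambda$, where $\lambda$ comes from Assumption \ref{assump_mu_integ}. By the uniform Hölder estimate \eqref{holder_local_times}, the integrand is bounded by $C\, a_\epsilon^\theta |\sca(v)|^\theta$ for an a.s. finite random $C$, and $\int_\mathbb{R}|\sca(v)|^\theta\,\mu(dv)$ is finite since $\mu$ is a probability and $|\sca|^\theta \leq 1 + |\sca|^\lambda$. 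Therefore $\sup_{[0,T]}|A_t^\epsilon - L_t^0| \leq C' a_\epsilon^\theta$, and since $a_\epsilon$ is a power of $\epsilon$ and $\gamma$ is slowly varying, Potter's bound (Lemma \ref{prop_slow_var}(ii)) gives $\gamma(1/\epsilon)\, a_\epsilon^\theta \to 0$, concluding (iii).

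Point (iv) is the technical heart of the lemma. By the occupation times formula and the substitution $y = x/a_\epsilon$,
\[
T_t^\epsilon = \frac{\kappa}{\rho_\epsilon}\int_\mathbb{R}\chi(y)\,L_t^{a_\epsilon y}\,dy.
\]
From \eqref{g's_-1} together with Assumption \ref{assump_f}, Karamata's theorem gives $\chi(y) \sim 16\, f_\pm^2 /(|y|\, \ell^2(|y|))$ as $y \to \pm\infty$, and consequently $\int_{|y|\leq 1/\epsilon}\chi(y)\,dy \sim 4(f_+^2 + f_-^2)\rho_\epsilon$. I decompose the integral at $|y| = 1/\epsilon$ as the main term $L_t^0 \int_{|y|\leq 1/\epsilon}\chi(y)\,dy$, producing $\sigma_2^2 L_t^0 = 4\kappa(f_+^2 + f_-^2) L_t^0$ in the limit; a bulk correction $\int_{|y|\leq 1/\epsilon}\chi(y)(L_t^{a_\epsilon y} - L_t^0)\,dy$, controlled via \eqref{holder_local_times} by $C\, a_\epsilon^\theta \int_{|y|\leq 1/\epsilon}\chi(y)|y|^\theta\,dy$ and shown to be $o(\rho_\epsilon)$ for small $\theta$ by Karamata; and a tail $\int_{|y|>1/\epsilon}\chi(y)L_t^{a_\epsilon y}\,dy$, which uses that $L_t^{a_\epsilon y} = 0$ for $|a_\epsilon y| > \sup_{[0,T]}|W_s|$ (a.s. finite) together with the tail asymptotics of $\chi$, the whole being $o(\rho_\epsilon)$ because $\rho = \infty$ forces $\rho_\epsilon\,\ell^2(1/\epsilon) \to \infty$. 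Calibrating these three error estimates uniformly in $t \in [0,T]$ while $\chi$ itself is not integrable is the principal obstacle.

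Finally, (v) follows from the change of variables $u = \tau_s^\epsilon$ used in the proof of Lemma \ref{first_lemma}:
\[
\int_0^t \varphi(X_s^\epsilon)\,ds = \kappa a_\epsilon^{-1}\int_0^{\tau_t^\epsilon} \frac{\varphi\circ\sca^{-1}(W_u/a_\epsilon)}{\psi^2(W_u/a_\epsilon)}\,du =: Y_{\tau_t^\epsilon}^\epsilon.
\]
Proposition \ref{conv_local} yields $Y_s^\epsilon \to \mu(\varphi) L_s^0$ uniformly on compacts a.s. Combining with point (ii) and the continuity of $s \mapsto L_s^0$, evaluating at $s = \tau_t^\epsilon \to \tau_t$ gives $\int_0^t \varphi(X_s^\epsilon)\,ds \to \mu(\varphi) L_{\tau_t}^0 = \mu(\varphi) t$, as required.
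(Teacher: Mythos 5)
Your proposal is correct. Points (i), (ii), (iii) and (v) follow the paper's own argument essentially verbatim: (i) is Proposition \ref{conv_local} with $\varphi\equiv 1$, (ii) combines it with Lemma \ref{inverse_time} and $\mathbb{P}(\tau_{t-}<\tau_t)=0$, (iii) uses the H\"older estimate \eqref{holder_local_times} with $\theta\in(0,\tfrac12\wedge\lambda)$ (your version integrates the bound to get the explicit rate $O(a_\epsilon^\theta)$ rather than invoking dominated convergence, which is a harmless simplification), and (v) is the same time-change computation. The genuine divergence is in (iv). The paper splits the time integral according to whether $|W_s|>\delta$ or $|W_s|\le\delta$, kills the far region using Potter's bound and the uniform convergence of slowly varying ratios, extracts $r_{\epsilon,\delta}L_t^0$ from the near region via the occupation times formula, bounds the remainder by $r_{\epsilon,\delta}\sup_{[0,T]\times[-\delta,\delta]}|L_t^x-L_t^0|$, and only then sends $\delta\to0$ --- a double limit relying on mere continuity of $x\mapsto L_t^x$ at $0$. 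You instead pass to the spatial integral $\frac{\kappa}{\rho_\epsilon}\int_{\mathbb{R}}\chi(y)L_t^{a_\epsilon y}\,\dr y$ once and for all, cut at $|y|=1/\epsilon$, and control the bulk error quantitatively by the H\"older bound $|L_t^{a_\epsilon y}-L_t^0|\le C a_\epsilon^\theta|y|^\theta$ together with Karamata ($a_\epsilon^\theta\int_{|y|\le 1/\epsilon}\chi(y)|y|^\theta\,\dr y = O(1/\ell^2(1/\epsilon)) = o(\rho_\epsilon)$), the tail being $o(\rho_\epsilon)$ from the compact support of $L_T^{\cdot}$ and $\chi\le A h$; both the bulk and the tail estimates ultimately rest on $\rho_\epsilon\ell^2(1/\epsilon)\to\infty$, which the paper also uses. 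Your route buys a single limit in $\epsilon$ with an explicit rate and avoids the $\delta$-bookkeeping, at the price of invoking the H\"older modulus of local times (which the paper's (iv) deliberately avoids, using it only in (iii)); the asymptotics $\chi(y)\sim 16 f_\pm^2/(|y|\ell^2(|y|))$ and $\int_{|y|\le 1/\epsilon}\chi\sim 4(f_+^2+f_-^2)\rho_\epsilon$ you quote are exactly the paper's \eqref{equivalent_chi}, so the main term matches $\sigma_2^2 L_t^0$. The estimates you leave as "by Karamata" are routine and do close, so I see no gap.
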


\begin{proof}
\textbf{Item (i).} The first point is just the application of Proposition \ref{conv_local} with $\varphi = 1$.

\bigskip\noindent
\textbf{Item (ii).} The second point follows immediately from the first and Lemma \ref{inverse_time}, since for all $t\geq0$, as recalled in Subsection \ref{section_local_times}, $\mathbb{P}(\tau_{t-} < \tau_t) = 0$.

\bigskip\noindent
\textbf{Item (iii).} Using the same scheme as in the proof of Proposition \ref{conv_local}, we have 
$$\sup_{t\in[0, T]}\gamma(1/\epsilon)\left|A_t^{\epsilon}\! - \! L_t^0\right| \!= \!\sup_{t\in[0, T]}\left|\kappa\gamma(1/\epsilon)\int_{\mathbb{R}}\frac{L_t^{\epsilon y / \kappa} - L_t^0}{\psi^2(y)}\dr y\right| \leq \int_{\mathbb{R}}\gamma(1/\epsilon)\sup_{t\in[0, T]}\left|L_t^{\epsilon \sca(v) / \kappa} - L_t^0\right|\mu(\dr v).$$

\noindent
Now we use \eqref{holder_local_times} with some $\theta \in (0,\frac{1}{2}\wedge\lambda)$, where $\lambda$ refers to the constant of Assumption \ref{assump_mu_integ}, so that $|\sca|^{\lambda}\in \mathrm{\bm{L}}^1(\mu)$. We thus have $\gamma(1/\epsilon)\sup_{t\in[0, T]}|L_t^{\epsilon \sca(v) / \kappa} - L_t^0| \leq C|\sca(v)|^{\theta}\gamma(1/\epsilon)\epsilon^{\theta}$, for some random $C$. Since $\gamma$ is slowly varying, we conclude that $\gamma(1/\epsilon)\sup_{t\in[0, T]}|L_t^{\epsilon \sca(v) / \kappa} - L_t^0| \longrightarrow 0$ as $\epsilon\to0$, for each fixed $v$. Moreover $$\sup_{\epsilon\in(0,1)}\gamma(1/\epsilon)\sup_{t\in [0, T]}\left|L_t^{\epsilon \sca(v) / \kappa} - L_t^0\right| \leq C\left(\sup_{\epsilon\in(0,1)}\gamma(1/\epsilon)\epsilon^{\theta}\right) |\sca(v)|
^{\theta}$$ 
\noindent
and we can conclude by dominated convergence since $|\sca|^{\theta}\in \mathrm{\bm{L}}^1(\mu)$.

\bigskip\noindent
\textbf{Item (iv).} \textit{Step 0 :} We first estimate asymptotically the function $\chi$. Let us recall from \eqref{g's_-1} that
$$(g\circ \sca^{-1})'(x) = 2\int_x^{\infty}\phi(v)\dr v = -2\int_{-\infty}^{x}\phi(v)\dr v.$$

\noindent
Let us define the function $h$ on $(0,\infty)$:
$$h(x) = \left(\int_x^{\infty}\frac{\dr v}{v^{3/2}\ell(v)}\right)^2.$$

\noindent
By \eqref{phi_asymp}, we have $|x|^{3/2}\ell(|x|)\phi(x)\underset{x\to\pm\infty}{\longrightarrow}f_{\pm}$ and thus
$$\left(\int_x^{\infty}\frac{\dr v}{v^{3/2}\ell(v)}\right)^{-1}\int_x^{\infty}\phi(v)\dr v\underset{x\to\infty}{\longrightarrow}f_{+}$$

\noindent
and
$$\left(\int_{|x|}^{\infty}\frac{\dr v}{v^{3/2}\ell(v)}\right)^{-1}\int_{-\infty}^{x}\phi(v)\dr v\underset{x\to-\infty}{\longrightarrow}f_{-}.$$

\noindent
All in all, recalling that $\chi = [(g\circ \sca^{-1})']^{2}$, we have $[h(|x|)]^{-1}\chi(x)\underset{x\to\pm\infty}{\longrightarrow}4f_{\pm}^2$ from which
\begin{equation}\label{equivalent_chi}
    \int_{-x}^x \chi(z)\dr z \underset{x\to\infty}{\sim}4(f_+^2 + f_-^2)\int_1^x h(v)\dr v.
\end{equation}

\noindent
We used that $\rho = \int_1^{\infty}h(v)\dr v = \infty$ by assumption. Moreover there exists $A>0$ such that for all $x$, we have $\chi(x) \leq Ah(|x|)$. Now we write for $\delta > 0$ fixed, $T_t^{\epsilon} = D_t^{\epsilon,\delta} + E_t^{\epsilon,\delta}$ where
$$D_t^{\epsilon,\delta} = \int_0^t \kappa^2|\epsilon\rho_{\epsilon}|^{-1}\chi(\kappa W_s / \epsilon)\bm{1}_{\{|W_s|>\delta\}}\dr s  \:\:\:\text{    and    }\:\:\: E_t^{\epsilon,\delta} = \int_0^t \kappa^2|\epsilon\rho_{\epsilon}|^{-1}\chi(\kappa W_s / \epsilon)\bm{1}_{\{|W_s|\leq\delta\}}\dr s.$$

\noindent
\textit{Step 1 :} We show that $\sup_{t\in[0,T]}|D_t^{\epsilon,\delta}|\to 0$ as $\epsilon\to 0$. We have $\chi(\kappa W_s/\epsilon) \leq Ah(\kappa |W_s|/\epsilon)$. Now notice that in this case, we have a.s.,
\begin{align*}
    \bm{1}_{\{|W_s|>\delta\}}h(\kappa |W_s|/\epsilon) & = \bm{1}_{\{|W_s|>\delta\}}\left(\int_{\kappa |W_s|/\epsilon}^{\infty}\frac{\dr u}{u^{3/2}\ell( u)}\right)^2 \\
       & = \bm{1}_{\{|W_s|>\delta\}}\frac{\epsilon}{\kappa |W_s|}\left(\int_{1}^{\infty}\frac{\dr v}{v^{3/2}\ell(\kappa |W_s| v/\epsilon)}\right)^2 \\
      & \leq \bm{1}_{\{|W_s|>\delta\}}\frac{\epsilon}{\kappa \delta}\left(\int_{1}^{\infty}\frac{\dr v}{v^{3/2}\ell(\kappa |W_s|v/\epsilon)}\right)^2.
\end{align*}

\noindent
We know from Lemma \ref{prop_slow_var}-(i) (with $a = \delta$ and $b = \sup_{s\in [0,T]}|W_s|$) that the following uniform convergence holds a.s.
$$\sup_{s\in [0,T]}\bm{1}_{\{|W_s|>\delta\}}\left|\frac{\ell(v/\epsilon)}{\ell(\kappa |W_s|v/\epsilon)} - 1\right| \underset{\epsilon\to0}{\longrightarrow}0.$$

\noindent
Hence there exists a random $M > 0$ such that for any $s\in[0,T]$ and $\epsilon \in (0,1)$, $|W_s| > \delta$ implies that $1/\ell(\kappa |W_s|v/\epsilon) \leq M/\ell(v/\epsilon)$. Consequently, $$\sup_{s\in[0,T]}\bm{1}_{\{|W_s|>\delta\}}|\chi(\kappa W_s/\epsilon)| \leq \sup_{s\in[0,T]}\bm{1}_{\{|W_s|>\delta\}}Ah(\kappa |W_s|/\epsilon)\leq \frac{\epsilon A M^2}{\kappa\delta}\left(\int_{1}^{\infty}\frac{\dr v}{v^{3/2}\ell(v/\epsilon)}\right)^2.$$     

\noindent
We know from Potter's bound (Lemma \ref{prop_slow_var}-(ii)) that there exists $x_0>1$ such that for all $x, y \geq x_0$,
$$\frac{\ell(y)}{\ell(x)}\leq 2\left(\Big|\frac{y}{x}\Big|^{1/4} \vee \Big|\frac{x}{y}\Big|^{1/4}\right).$$

\noindent
Hence, if $\epsilon\in(0,1/x_0)$, we have
$$\left(\int_{1}^{\infty}\frac{\dr v}{v^{3/2}\ell(v/\epsilon)}\right)^2 \leq \frac{4}{\ell^2(1/\epsilon)}\left(\int_{1}^{\infty}\frac{|v|^{1/4} \vee |v|^{-1/4}\dr v}{v^{3/2}}\right)^2 .$$

\noindent
All in all, there exists a random constant that we name $M$ again, such that
$$\sup_{t\in [0,T]}|D_t^{\epsilon,\delta}| \leq \frac{MT}{\delta\ell^2(1/\epsilon)\rho_{\epsilon}}.$$

\noindent
It holds that $\ell^2(1/\epsilon) \rho_{\epsilon} \longrightarrow \infty$, see the end of the proof of Lemma \ref{prop_slow_var}-(iii) and use that $\rho_{\epsilon} = M(1/\epsilon)$, and thus for each $\delta > 0$, $\sup_{t\in[0,T]}|D_t^{\epsilon,\delta}| \longrightarrow 0$ as $\epsilon \to 0$.

\medskip\noindent
\textit{Step 2 :} We concentrate on $E_t^{\epsilon,\delta}$. Using the occupation times formula, we have
$$E_t^{\epsilon,\delta} = \left(\int_{-\delta}^{\delta}\frac{\kappa ^2\chi(\kappa x/\epsilon) \dr x}{|\epsilon\rho_{\epsilon}|}\right)L_t^0 + \int_{-\delta}^{\delta}\frac{\kappa ^2\chi(\kappa x/\epsilon)(L_t^x - L_t^0) \dr x}{|\epsilon\rho_{\epsilon}|} = r_{\epsilon,\delta}L_t^0 + R_t^{\epsilon,\delta},$$

\noindent
the last equality standing for a definition. We have, using a substitution and \eqref{equivalent_chi},
$$r_{\epsilon,\delta} = \frac{\kappa }{\rho_{\epsilon}}\int_{-\kappa \delta/\epsilon}^{\kappa \delta/\epsilon}\chi(z)\dr z \underset{\epsilon\to0}{\sim} \frac{4\kappa(f_+^2 + f_-^2)}{\rho_{\epsilon}} \int_1^{\kappa \delta/\epsilon} h(x)\dr x \underset{\epsilon\to0}{\sim}4\kappa (f_+^2 + f_-^2) = \sigma_2^2,$$

\noindent
where we used that $\rho_{\epsilon} = \int_1^{1/\epsilon} h(x)\dr x$ and that the function $M(x)=\int_1^x h(v)\dr v$ is slowly varying by Lemma \ref{prop_slow_var}-(iii). 

\medskip\noindent
\textit{Step 3 : }We conclude. Recalling that $T_t^{\epsilon} = r_{\epsilon,\delta}L_t^0 + R_t^{\epsilon,\delta} + D_t^{\epsilon,\delta}$, we proved that a.s., for all $T > 0$ and all $\delta >0$,
$$\limsup_{\epsilon\to0}\sup_{t\in [0,T]}\left|T_t^{\epsilon} - \sigma_2^2 L_t^0\right| \leq \limsup_{\epsilon\to0} \sup_{t\in[0,T]}|R_t^{\epsilon,\delta}|.$$

\noindent
But
$$\sup_{t\in[0,T]}|R_t^{\epsilon,\delta}| \leq r_{\epsilon,\delta}\sup_{(t,x)\in[0,T]\times [-\delta,\delta]}|L_t^x - L_t^0|,$$

\noindent
which implies
$$\limsup_{\epsilon\to0}\sup_{t\in[0,T]}\left|T_t^{\epsilon} - \sigma_2^2 L_t^0\right| \leq \sigma_2^2\sup_{(t,x)\in[0,T]\times [-\delta,\delta]}|L_t^x - L_t^0|.$$

\noindent
Now, we let $\delta$ to $0$ which completes the proof.

\bigskip\noindent
\textbf{Item (v).} Using the definition of $X_t^{\epsilon}$, and making the substitution $u = \tau_s^{\epsilon} \Leftrightarrow s = A_u^{\epsilon}$, we have 
$$\int_0^t\varphi(X_s^{\epsilon})\dr s = \epsilon a_{\epsilon}^{-2}\int_0^{\tau_t^{\epsilon}}\frac{\varphi\circ \sca^{-1}(W_s / a_{\epsilon})}{\psi^2(W_s/a_{\epsilon})}\dr s = \kappa a_{\epsilon}^{-1}\int_0^{\tau_t^{\epsilon}}\frac{\varphi\circ \sca^{-1}(W_s / a_{\epsilon})}{\psi^2(W_s/a_{\epsilon})}\dr s.$$

\noindent
Let $T = \sup_{\epsilon\in(0,1)}\tau_t^{\epsilon}$ which is a.s. finite from the second point. We have
\begin{equation*}
    \begin{split}
        \left|\int_0^t\varphi(X_s^{\epsilon})\dr s - \mu(\varphi)t\right|  \leq & \sup_{u\in[0, T]}\left|\kappa a_{\epsilon}^{-1}\int_0^u \frac{\varphi\circ \sca^{-1}(W_s / a_{\epsilon})}{\psi^2(W_s/a_{\epsilon})}\dr s - \mu(\varphi)L_u^0\right|\\
         & + \mu(\varphi)\left|L_{\tau_t^{\epsilon}}^0 - t\right|.
    \end{split}
\end{equation*}

\noindent
The first term on the right hand side goes to $0$ by Proposition \ref{conv_local} and the second goes to $0$ since $\tau^{\epsilon}_t \to\tau_t$ by Point 2, $L_{\tau_t}^0 = t$ a.s., and $(L_t^0)_{t\geq0}$ is continuous.
\end{proof}

Let us state two immediate consequences of the previous lemma.

\begin{remark}\label{ergodic_version}
Point (5) of the preceding Lemma recovers a weak version of the ergodic theorem for the process $(X_t)_{t\geq0}$. Although it is straightforward since the process is Harris recurrent, this provides a self-contained proof of this weak version, which we only need. For all $\varphi\in \mathrm{\bm{L}}^1(\mu)$ and all $t\geq0$, $\epsilon\int_0^{t/\epsilon}\varphi(X_s)\dr s \overset{d}{=}\int_0^{t}\varphi(X_s^{\epsilon})\dr s$, hence for all $t\geq0$
$$\epsilon\int_0^{t/\epsilon}\varphi(X_s)\dr s \overset{\mathbb{P}}{\longrightarrow}\mu(\varphi)t.$$
\end{remark}

\begin{remark}\label{critical_diffusive_remark}
Point (4) prepares the critical diffusive regime. Using arguments similar to those in the proof of point (5), we easily see that if $\alpha = 2$ and $\rho = \infty$, then for every $t\geq0$, a.s., $T_{\tau_t^{\epsilon}}^{\epsilon} \underset{\epsilon\to0}{\longrightarrow}\sigma_{2}^2 t$. Now one can also see that as $|\epsilon/\rho_{\epsilon}|\int_0^{t/\epsilon}[g'(X_s)\sigma(X_s)]^2\dr s \overset{d}{=}T_{\tau_t^{\epsilon}}^{\epsilon}$, we have
$$|\epsilon/\rho_{\epsilon}|\int_0^{t/\epsilon}[g'(X_s)\sigma(X_s)]^2\dr s \overset{\mathbb{P}}{\longrightarrow}\sigma_{2}^2 t.$$
\end{remark}

The next result is the last preliminary before proving the main result in the L\'evy case.

\begin{lemma}\label{lemme2}
Let $(W_t)_{t\geq0}$ be a Brownian motion, and $(L_t^0)_{t\geq0}$ its local time in 0. Let also $(K_t)_{t\geq0}$ be the process defined in section \ref{stable_process}, with $(a,b) = (f_+, f_-)$. For all $\epsilon > 0$, we consider the process $(H_t^{\epsilon})_{t\geq0}$ from Lemma \ref{first_lemma} with the choice $a_{\epsilon} = \epsilon / \kappa $.

\begin{enumerate}[label=(\roman*)]
    \item If $\alpha \neq 1$,  then a.s., for all $T > 0$, $$\sup_{t\in[0, T]} \left|\epsilon^{1/\alpha}\ell(1/\epsilon)H_t^{\epsilon} - \kappa^{1/\alpha} K_t\right| \underset{\epsilon\to0}{\longrightarrow} 0.$$
    
    \item If $\alpha = 1$, there exists $(\xi_{\epsilon})_{\epsilon >0}$ such that, a.s., for all $T > 0$, $$\sup_{t\in[0, T]}\left|\epsilon \ell(1/\epsilon) H_t^{\epsilon} - \xi_{\epsilon}L_t^0 - \kappa K_t\right| \underset{\epsilon\to0}{\longrightarrow} 0,$$
    
    \noindent
    where $\xi_{\epsilon} \underset{\epsilon\to0}{\sim} -\kappa(f_+ + f_-)\ell(1/\epsilon)\int_{1/\epsilon}^{\infty}\frac{\dr x}{x\ell(x)}$ if $f\in \mathrm{\bm{L}}^1(\mu)$ and $\xi_{\epsilon} \underset{\epsilon\to0}{\sim} \kappa(f_+ + f_-)\ell(1/\epsilon)\int_1^{1/\epsilon}\frac{\dr x}{x\ell(x)}$ otherwise.
\end{enumerate}
\end{lemma}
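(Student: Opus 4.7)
The strategy is to recast $H_t^\epsilon$ as an integral of the Brownian local time against a rescaled profile, identify its pointwise limit from Assumption~\ref{assump_f}, and then match the $L^1$-divergences near $v=0$ with the compensations that appear in the definition of $K_t$ in Section~\ref{stable_process}. Starting from $H_t^\epsilon = (\kappa/\epsilon)^2\int_0^t \phi(\kappa W_s/\epsilon)\dr s$, the occupation times formula followed by the substitution $u = \kappa v/\epsilon$ gives
$$\epsilon^{1/\alpha}\ell(1/\epsilon)H_t^\epsilon = \int_\mathbb{R}\phi_\epsilon(v)L_t^v\dr v,\qquad \phi_\epsilon(v) := \kappa^2 \epsilon^{1/\alpha-2}\ell(1/\epsilon)\phi(\kappa v/\epsilon).$$
By \eqref{phi_asymp} together with Lemma~\ref{prop_slow_var}-(i), $\phi_\epsilon(v)\to \kappa^{1/\alpha}\mathrm{sgn}_{f_+,f_-}(v)|v|^{1/\alpha-2}$ on $\mathbb{R}\setminus\{0\}$, and Potter's bound (Lemma~\ref{prop_slow_var}-(ii)) furnishes an integrable majorant on the a.s.\ compact support of $v\mapsto L_t^v$, uniformly in $t\in[0,T]$.

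When $\alpha\in(0,1)$, the exponent $1/\alpha-2>-1$ makes the limiting profile locally integrable at the origin, and a direct dominated convergence yields uniform convergence to $\kappa^{1/\alpha}\int_\mathbb{R}\mathrm{sgn}_{f_+,f_-}(v)|v|^{1/\alpha-2}L_t^v\dr v=\kappa^{1/\alpha}K_t$ in the sense of Lemma~\ref{stable0_1}. When $\alpha\in(1,2)$, the limit profile is no longer integrable at $0$; however, Proposition~\ref{integrability_f} together with Assumption~\ref{assump_f} gives $f\in\mathrm{\bm{L}}^1(\mu)$ with $\mu(f)=0$, hence $\int_\mathbb{R}\phi_\epsilon\dr v=0$ and one may replace $L_t^v$ by $L_t^v-L_t^0$ in the integral. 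Picking $\theta\in(0,1/2)$ such that $\theta+1/\alpha-2>-1$ and using the Hölder estimate \eqref{holder_local_times} to bound $|L_t^v-L_t^0|\leq C|v|^\theta$ uniformly in $t\in[0,T]$, dominated convergence then identifies the limit with $\kappa^{1/\alpha}K_t$ in the sense of Lemma~\ref{stable1_2}.

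For $\alpha=1$, define $\xi_\epsilon:=\int_{|v|\leq 1}\phi_\epsilon(v)\dr v$, so that
$$\epsilon\ell(1/\epsilon)H_t^\epsilon - \xi_\epsilon L_t^0 = \int_{|v|\leq 1}\phi_\epsilon(v)(L_t^v-L_t^0)\dr v + \int_{|v|>1}\phi_\epsilon(v)L_t^v\dr v.$$
Hölder continuity of local times controls the first integral (where $\phi_\epsilon(v)\sim \kappa\,\mathrm{sgn}_{f_+,f_-}(v)/|v|$ combined with $|L_t^v-L_t^0|\lesssim|v|^\theta$ is integrable), and dominated convergence applies on the second (the support of $L_t^\cdot$ is compact), so both sides converge uniformly on $[0,T]$ to $\kappa\int_{|v|\leq 1}\mathrm{sgn}_{f_+,f_-}(v)|v|^{-1}(L_t^v-L_t^0)\dr v + \kappa\int_{|v|>1}\mathrm{sgn}_{f_+,f_-}(v)|v|^{-1}L_t^v\dr v=\kappa K_t$ with $K_t$ as in Lemma~\ref{stable_1}. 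It remains to derive the asymptotics of $\xi_\epsilon = \kappa\ell(1/\epsilon)\int_{|u|\leq\kappa/\epsilon}\phi(u)\dr u$ via a tail analysis based on \eqref{phi_asymp}: if $f\in\mathrm{\bm{L}}^1(\mu)$ one has $\int_\mathbb{R}\phi=0$, hence $\int_{|u|\leq\kappa/\epsilon}\phi=-\int_{|u|>\kappa/\epsilon}\phi\sim -(f_++f_-)\int_{\kappa/\epsilon}^\infty du/(u\ell(u))$; otherwise $\int_{|u|\leq\kappa/\epsilon}\phi\sim(f_++f_-)\int_1^{\kappa/\epsilon}du/(u\ell(u))$. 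The slowly-varying character of these primitives, ensured by Lemma~\ref{prop_slow_var}-(iii), allows us to replace $\kappa/\epsilon$ by $1/\epsilon$ and produces the claimed equivalents.

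\textbf{Main difficulty.} The delicate point is the critical case $\alpha=1$: one must simultaneously track the logarithmic divergence at $0$ (absorbed into $\xi_\epsilon L_t^0$) and the possibly divergent behavior of $\int\phi$ at infinity, with the two integrability regimes ($f\in\mathrm{\bm{L}}^1(\mu)$ or not) producing two different asymptotic forms of $\xi_\epsilon$; the slowly-varying estimates of Lemma~\ref{prop_slow_var}-(iii) are essential both to write these forms and to justify replacing $\kappa/\epsilon$ by $1/\epsilon$ inside the logarithmic-type integrals.
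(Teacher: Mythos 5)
Your proposal is correct and follows essentially the same route as the paper's proof: occupation-times formula, pointwise limit of the rescaled profile from \eqref{phi_asymp}, Potter's bound for the integrable majorant, the centering by $L_t^0$ via $\int_{\mathbb{R}}\phi=0$ together with the H\"older estimate \eqref{holder_local_times}, and the identical definition and tail analysis of $\xi_{\epsilon}$ using Lemma \ref{prop_slow_var}-(iii). The only differences are cosmetic (the paper treats $\alpha\in(0,1)$ by dominated convergence directly in the time integral rather than through the occupation density, and for $\alpha\in(1,2)$ it pairs your bound $|L_t^v-L_t^0|\leq C|v|^{\theta}$ with the global bound $\sup_{[0,T]\times\mathbb{R}}|L_t^v-L_t^0|<\infty$, i.e. uses $\min(1,|v|^{\theta})$, which you need at infinity when $\alpha$ is close to $1$).
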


\begin{proof}
\textbf{Technical estimates.} We first show some technical estimates that we will use in this proof.

\medskip
\textit{(i)} For all $x\in\mathbb{R}^*$, it holds (and this is the case for all $\alpha\in(0,2)$) that
\begin{equation}\label{aaa}
    \kappa ^2\epsilon^{1/\alpha - 2}\ell(1/\epsilon)\phi\left(\kappa  x / \epsilon\right) \underset{\epsilon\to0}{\longrightarrow} \kappa^{1/\alpha} \mathrm{sgn}_{a,b}(x)|x|^{1/\alpha - 2}.
\end{equation}

\noindent
Indeed we have by \eqref{phi_asymp}, since $\ell$ is slowly varying and since $(a,b) = (f_+, f_-)$,
$$\forall x > 0, \quad\kappa ^2\epsilon^{1/\alpha - 2}\ell(1/\epsilon)\phi\left(\kappa  x / \epsilon\right) \underset{\epsilon\to0}{\longrightarrow} \kappa^{1/\alpha} ax^{1/\alpha - 2},$$
$$\forall x < 0,\quad \kappa ^2\epsilon^{1/\alpha - 2}\ell(1/\epsilon)\phi\left(\kappa  x / \epsilon\right) \underset{\epsilon\to0}{\longrightarrow} \kappa^{1/\alpha} b|x|^{1/\alpha - 2}.$$

\medskip
\textit{(ii)} For all $\delta > 0$, there exist $A> 0 $ and $\epsilon_0 > 0$ (and this is the case for all $\alpha\in(0,2)$) such that for all $x\in\mathbb{R}$ and all $\epsilon\in(0,\epsilon_0)$,
\begin{equation}\label{bbb}
    \epsilon^{1/\alpha - 2}\ell(1/\epsilon)\phi\left(\kappa  x / \epsilon\right) \leq A (|x|^{1/\alpha - 2}  + 1)(|x|^{\delta} + |x|^{-\delta})
\end{equation}

\noindent
Indeed, we start using \eqref{phi_asymp} again and that $\phi$ and $\ell$ are locally bounded: there exists $A > 0$ such that for all $x\in\mathbb{R}$, $\ell(|x|)|\phi(x)|\leq A (|x|^{1/\alpha - 2} + 1)$. Hence we have
$$\epsilon^{1/\alpha - 2}\ell(1/\epsilon)|\phi\left(\kappa  x/ \epsilon\right)| \leq A (|x|^{1/\alpha - 2} + 1)\frac{\ell(1/\epsilon)}{\ell(\kappa |x| / \epsilon)},$$

\noindent
the value of $A$ being allowed to change. We know from Potter's bound (Lemma \ref{prop_slow_var}-(ii)) that there exists $x_0$ such that for all $x, y \geq x_0$,
$$\frac{\ell(y)}{\ell(x)}\leq 2\left(\Big|\frac{y}{x}\Big|^{\delta} \vee \Big|\frac{x}{y}\Big|^{\delta}\right).$$

\noindent
Thus, for $\epsilon\in(0, 1/x_0)$ we have 
$$\epsilon^{1/\alpha - 2}\ell(1/\epsilon)|\phi\left(\kappa  x / \epsilon\right)|\bm{1}_{\{\kappa |x|\geq\epsilon x_0\}} \leq A (|x|^{1/\alpha - 2}  + 1)(|x|^{\delta}\vee |x|^{-\delta}).$$

\noindent
The function $\ell$ is strictly positive and bounded from below on every compact set and thus there exists $\ell_0 > 0$ such that for all $x\in[0,x_0]$, $\ell(x)\geq \ell_0$. Now if $\kappa |x|<\epsilon x_0$, we have $\ell(\kappa |x| / \epsilon) \geq \ell_0$ and $1 \leq (\kappa |x|/\epsilon x_0)^{-\delta}$. Hence
$$\epsilon^{1/\alpha - 2}\ell(1/\epsilon)|\phi\left(\kappa  x / \epsilon\right)|\bm{1}_{\{\kappa |x|<\epsilon x_0\}} \leq A(|x|^{1/\alpha - 2} + 1) |x|^{-\delta} \sup_{\epsilon\in(0,1)}\epsilon^{\delta}\ell(1/\epsilon).$$

\noindent
We proved \eqref{bbb}.

\bigskip\noindent
\textbf{Item (i) when $\bm{\alpha \in (0,1)}$}. Let us remind that $K_t = \int_0^t\mathrm{sgn}_{a,b}(W_s)|W_s|^{1/\alpha - 2}ds$. We have $\epsilon^{1/\alpha}\ell(1/\epsilon)H_t^{\epsilon} = \kappa ^2 \epsilon^{1/\alpha - 2}\ell(1/\epsilon)\int_0^t \phi\left(\kappa  W_s / \epsilon\right)\dr s$ and
$$\sup_{t\in [0, T]}\left|\epsilon^{1/\alpha}\ell(1/\epsilon)H_t^{\epsilon} - \kappa^{1/\alpha} K_t\right| \leq \int_0^T \left| \kappa ^2\epsilon^{1/\alpha - 2}\ell(1/\epsilon)\phi\left(\kappa  W_s / \epsilon\right) - \kappa^{1/\alpha}\mathrm{sgn}_{a,b}(W_s)|W_s|^{1/\alpha - 2}\right|\dr s$$

\noindent
which tends to 0 a.s. by dominated convergence. Indeed, since $\{t\in[0,T], W_t = 0\}$ is a.s. Lebesgue-null, we have by \eqref{aaa} that a.s., for a.e. $s\in[0,T]$,
$$|\kappa ^2\epsilon^{1/\alpha - 2}\ell(1/\epsilon)\phi\left(\kappa  W_s / \epsilon\right) - \kappa^{1/\alpha}\mathrm{sgn}_{a,b}(W_s)|W_s|^{1/\alpha - 2}| \to 0$$

\noindent
Next we can dominate $\epsilon^{1/\alpha - 2}\ell(1/\epsilon)|\phi(\kappa  W_s / \epsilon)|$ using \eqref{bbb} by
$$A (|W_s|^{1/\alpha - 2}  + 1)(|W_s|^{\delta} + |W_s|^{-\delta})$$

\noindent
which is a.s. integrable on $[0,T]$ if $\delta > 0$ is small enough so that $1/\alpha - 2 - \delta > - 1$.

\bigskip\noindent
\textbf{Item (i) when $\bm{\alpha \in (1,2)}$}. We notice that $\mu(f) = \kappa  \int_{\mathbb{R}}\phi(z)\dr z$, which equals $0$ by Assumption \ref{assump_f}. Indeed,
\begin{equation}\label{ccc}
    \int_{\mathbb{R}}\phi(z)\dr z = \int_{\mathbb{R}} \frac{f(\sca^{-1}(z))}{\sigma^2(\sca^{-1}(z))\sca'(\sca^{-1}(z))^2}\dr z = \int_{\mathbb{R}}\frac{f(v)}{\sigma^2(v)\sca'(v)}\dr v = \frac{\mu(f)}{\kappa }.
\end{equation}

\noindent
We can now use the occupation times formula to write
\begin{equation*}
    \begin{split}
        \epsilon^{1/\alpha}\ell(1/\epsilon)H_t^{\epsilon} & = \kappa ^2 \epsilon^{1/\alpha - 2}\ell(1/\epsilon)\int_0^t \phi\left(\kappa  W_s / \epsilon\right)\dr s \\
         & = \int_{\mathbb{R}} \kappa ^2 \epsilon^{1/\alpha - 2}\ell(1/\epsilon)\phi\left(\kappa  x / \epsilon\right)L_t^x \dr x \\
         & = \int_{\mathbb{R}} \kappa ^2 \epsilon^{1/\alpha - 2}\ell(1/\epsilon)\phi\left(\kappa  x / \epsilon\right)(L_t^x -L_t^0)\dr x,
    \end{split}
\end{equation*}

\noindent
since $\int_{\mathbb{R}}\phi(z)\dr z = 0$ as noted above. Thus, recalling that $K_t = \int_{\mathbb{R}}\mathrm{sgn}_{a,b}(x)|x|^{1/\alpha -2}(L_t^x -L_t^0)\dr x$ in this case, we have
\begin{align*}
    & \sup_{t\in[0, T]}\left|
    \epsilon^{1/\alpha}\ell(1/\epsilon)H_t^{\epsilon} - \kappa^{1/\alpha} K_t\right| \\
    \leq & \int_{\mathbb{R}}\left|\kappa ^2 \epsilon^{1/\alpha - 2}\ell(1/\epsilon)\phi(\kappa  x / \epsilon) - \kappa^{1/\alpha}\mathrm{sgn}_{a,b}(x)|x|^{1/\alpha -2}\right|\sup_{t\in[0, T]}\left|L_t^x -L_t^0\right|\dr x.
\end{align*}    
    
\noindent
We conclude by dominated convergence. First, by \eqref{aaa}, the integrand converges to $0$ for each $x\in\mathbb{R}^*$.

\medskip\noindent
We next use \eqref{bbb} to dominate $R_{\epsilon}(x) = |\kappa ^2 \epsilon^{1/\alpha - 2}\ell(1/\epsilon)\phi(\kappa  x / \epsilon)|\sup_{t\in[0, T]}\left|L_t^x -L_t^0\right|$ by
$$A (|x|^{1/\alpha - 2}  + 1)(|x|^{\delta} + |x|^{-\delta})|\sup_{t\in[0, T]}\left|L_t^x -L_t^0\right| \leq M (|x|^{1/\alpha - 2}  + 1)(|x|^{\delta} + |x|^{-\delta})| \min(1,|x|^{\theta}),
$$
\noindent
for some random constant $M > 0$ and $\theta\in(0,1/2)$ fixed. We used \eqref{holder_local_times} and the fact that $\sup_{(t,x)\in[0, T]\times\mathbb{R}}\left|L_t^x -L_t^0\right| < \infty$ a.s. This bound is integrable on $\mathbb{R}$ if we choose $\delta > 0$ and $\theta\in(0,1/2)$ such that $1/\alpha - 2 +\delta < -1$ and $1/\alpha - 2 + \theta -\delta > -1 $. This is possible because $-3/2 <1/\alpha - 2 < -1$.

\bigskip\noindent
\textbf{Item (ii): Convergence}. Setting $\xi_{\epsilon} = \kappa ^2\epsilon^{-1}\ell(1/\epsilon)\int_{-1}^1\phi\left(\kappa  x/\epsilon\right)\dr x$ and using the occupation times formula, we have
$$\epsilon \ell(1/\epsilon) H_t^{\epsilon} - \xi_{\epsilon}L_t^0 = \int_{|x|\geq 1}\kappa ^2 \epsilon^{-1}\ell(1/\epsilon)\phi\left(\kappa  x/\epsilon\right)L_t^x \dr x + \int_{|x|\leq 1}\kappa ^2 \epsilon^{-1}\ell(1/\epsilon)\phi\left(\kappa  x/\epsilon\right)(L_t^x - L_t^0) \dr x$$

\noindent
Then, remembering that $K_t = \int_{\mathbb{R}}\mathrm{sgn}_{a,b}(x)|x|^{-1}(L_t^x -L_t^0\bm{1}_{\{|x| \leq 1\}})\dr x$, we write
\begin{align*}
    & \sup_{t\in[0, T]}\left|\epsilon H_t^{\epsilon} - \xi_{\epsilon}L_t^0 - \kappa K_t\right| \\
     \leq &\int_{|x| \leq 1}\left|\kappa ^2 \epsilon^{-1}\ell(1/\epsilon)\phi(\kappa  x/\epsilon) - \kappa\mathrm{sgn}_{a,b}(x)|x|^{-1}\right|\sup_{t\in[0, T]}\left|L_t^x -L_t^0\right|\dr x \\
    & + \int_{|x| > 1}\left|\kappa ^2 \epsilon^{-1}\ell(1/\epsilon)\phi(\kappa  x/\epsilon) - \kappa\mathrm{sgn}_{a,b}(x)|x|^{-1}\right| L_T^x \dr x,
\end{align*}

\noindent
Again, we conclude by dominated convergence. By \eqref{aaa}, the integrand in both terms converges to $0$ for each $x\in\mathbb{R}^*$.

\medskip
We first use \eqref{bbb} to dominate $R_{\epsilon}(x) = |\kappa ^2 \epsilon^{-1}\ell(1/\epsilon)\phi(\kappa  x / \epsilon)|\sup_{t\in[0, T]}\left|L_t^x -L_t^0\right|$, for all $|x|\leq 1$, by
$$M (|x|^{-1}  + 1)(|x|^{\delta} + |x|^{-\delta})|\times|x|^{\theta},
$$

\noindent
for some random constant $M > 0$ and $\theta\in(0,1/2)$ fixed. We used \eqref{holder_local_times} and the fact that $\sup_{(t,x)\in[0, T]\times\mathbb{R}}\left|L_t^x -L_t^0\right| < \infty$ a.s. This bound is integrable on $[-1,1]$ if we choose $\delta > 0$ and $\theta\in(0,1/2)$ such that $-1 + \theta -\delta > -1 $, which is possible: $\theta = 1/4$, $\delta = 1/8$.

\medskip
Now we dominate the quantity $P_{\epsilon}(x) = |\epsilon^{-1}\ell(1/\epsilon)\phi(\kappa  x/\epsilon)|L_T^x$ for all $|x| > 1$. The second integral is actually supported by the compact set $\{|x|\in[1,S_T]\}$ where $S_T = \sup_{s\in[0,T]}|W_s|$ since $L_T^x = 0$ as soon as $|x| > S_T$. Using \eqref{bbb} with $\delta = 1/2$,
$$P_{\epsilon}(x) \leq A (|x|^{-1}  + 1)(|x|^{1/2} + |x|^{-1/2})L_T^x,$$

\noindent
which is a.s. bounded and thus integrable on $\{|x|\in[1,S_T]\}$.

\medskip\noindent
\textbf{Item (ii): Asymptotics of $\xi_{\epsilon}$}. First remember from Proposition \ref{integrability_f} that $f\in\mathrm{\bm{L}}^{1}(\mu)$ if and only if $\int_1^{\infty}\frac{\dr x}{x\ell(x)} < \infty$.

\medskip
\textit{Case 1:} Let us first suppose $\int_1^{\infty}\frac{\dr x}{x\ell(x)} = \infty$ and show that $\xi_{\epsilon} \underset{\epsilon\to0}{\sim}\kappa (f_+ + f_-)\ell(1/\epsilon)\int_1^{1/\epsilon}\frac{\dr x}{x\ell(x)}$. For $\epsilon \in (0,\kappa)$
\begin{equation}\label{xi_epsilon}
    \begin{split}
        \xi_{\epsilon} & = \kappa \ell(1/\epsilon)\int_{-\kappa /\epsilon}^{\kappa /\epsilon}\phi(x)\dr x\\
         & = \kappa \ell(1/\epsilon)\int_{-\kappa /\epsilon}^{-1}\phi(x)\dr x + \kappa \ell(1/\epsilon)\int_{-1}^{1}\phi(x)\dr x + \kappa \ell(1/\epsilon)\int_{1}^{\kappa /\epsilon}\phi(z)\dr z.
    \end{split}
\end{equation}

\noindent
The middle term is $o(\ell(1/\epsilon)\int_1^{1/\epsilon}\frac{\dr x}{x\ell(x)})$ and will not contribute. Assume e.g. $f_+\neq0$, then $\phi(x)\underset{x\to\infty}{\sim}f_+ / [x\ell(x)]$ and it comes that
$$\kappa \int_{1}^{\kappa /\epsilon}\phi(x)\dr x \underset{\epsilon\to0}{\sim} \kappa  f_+\int_{1}^{\kappa /\epsilon}\frac{\dr x}{x\ell(x)} \underset{\epsilon\to0}{\sim} \kappa  f_+\int_{1}^{1/\epsilon}\frac{\dr x}{x\ell(x)},$$

\noindent
where we used the fact that $L(x) = \int_1^{x}\frac{\dr v}{v\ell(v)}$ is a slowly varying function by Lemma \ref{prop_slow_var}-(iii). If $f_- \neq 0$, then $\kappa \int_{-\kappa /\epsilon}^{-1}\phi(x)\dr x \underset{\epsilon\to0}{\sim} \kappa  f_-\int_{1}^{1/\epsilon}\frac{\dr x}{x\ell(x)}$ as previously. If next  $f_- = 0$, we have $\phi(x)\underset{x\to-\infty}{=}o(|x\ell(|x|)|^{-1})$, then $\kappa \int_{-\kappa /\epsilon}^{-1}\phi(x)\dr x \underset{\epsilon\to0}{=} o(\int_{1}^{1/\epsilon}\frac{\dr x}{x\ell(x)})$ because $\int_1^{\infty}\frac{\dr x}{x\ell(x)} = \infty$.

\medskip
Now, if $f_+ = 0$, then necessarily $f_-\neq0$ and we can proceed as previously.

\medskip
\textit{Case 2:} $\int_1^{\infty}\frac{\dr x}{x\ell(x)} < \infty$. We must show $\xi_{\epsilon} \underset{\epsilon\to0}{\sim}-\kappa (f_+ + f_-)\ell(1/\epsilon)\int_{1/\epsilon}^{\infty}\frac{\dr x}{x\ell(x)}$. In this case, $f\in \mathrm{\bm{L}}^1(\mu)$ and we impose $\mu(f) = 0$ so that $\int_{\mathbb{R}}\phi(x)\dr x = 0$ by \eqref{ccc} and thus
$$\xi_{\epsilon} = -\kappa \ell(1/\epsilon)\int_{-\infty}^{-\kappa /\epsilon}\phi(x)\dr x - \kappa \ell(1/\epsilon)\int_{\kappa /\epsilon}^{\infty}\phi(x)\dr x.$$

\noindent
Let us treat the case $f_+ \neq 0$ and $f_- = 0$, the other cases being treated similarly. Since $f_+\neq 0$, we can integrate the equivalence $\phi(x)\underset{x\to\infty}{\sim}f_+ / [x\ell(x)]$ and get that $\int_{\kappa /\epsilon}^{\infty}\phi(x)\dr x \underset{x\to\infty}{\sim}f_+ \int_{1/\epsilon}^{\infty}\frac{\dr x}{x\ell(x)}$, where we used that $N(x) = \int_{x}^{\infty}\frac{\dr x}{x\ell(x)}$ is slowly varying by Lemma \ref{prop_slow_var}. Next $\int_{-\infty}^{-\kappa /\epsilon}\phi(x)\dr x \underset{x\to-\infty}{=}o( \int_{1/\epsilon}^{\infty}\frac{\dr x}{x\ell(x)})$ since $\phi(x)\underset{x\to-\infty}{=}o(|x\ell(|x|)|^{-1})$.
\end{proof}

We are now ready to give the proof of Theorem \ref{main_theorem}-(iii)-(iv).

\begin{proof}[Proof of Theorem \ref{main_theorem}-(iii)-(iv).]

Consider a Brownian motion $(W_t)_{t\geq0}$, its local time $(L_t^0)_{t\geq0}$ at $0$ and $\tau_t = \inf\{u\geq0, L_u^0 > t\}$. Consider also the process $(K_t)_{t\geq0}$ defined in Subsection \ref{stable_process} for $(a,b) = (f_+, f_-)$.

For each $\epsilon > 0$, consider the processes $(A_t^{\epsilon})_{t\geq}$, $(\tau_t^{\epsilon})_{t\geq}$ and $(H_t^{\epsilon})_{t\geq}$ from Lemma \ref{first_lemma} with the choice $a_{\epsilon} = \epsilon / \kappa $. We know from this Lemma that
\begin{equation}\label{equality_distrib}
    \left(\int_0^{t/\epsilon}f(X_s)\dr s\right)_{t\geq0}\overset{d}{=}\left(H_{\tau_t^{\epsilon}}^{\epsilon}\right)_{t\geq0}
\end{equation}

\noindent
And we know from Lemma \ref{lemma_time_change}-(ii) that $\tau_t^{\epsilon} \longrightarrow \tau_t$ a.s. for each $t$ fixed. Thus $T = \sup_{\epsilon\in(0,1)}\tau_t^{\epsilon}$ is a.s. finite. 

\bigskip\noindent
\textbf{Item (iii)}. We aim at showing that $(\epsilon^{1/\alpha}\ell(1/\epsilon)\int_0^{t/\epsilon}f(X_s)\dr s)_{t\geq0} \overset{f.d.}{\longrightarrow} (\sigma_{\alpha}S_t^{(\alpha)})_{t\geq0}$. By \eqref{equality_distrib} it is enough to show that a.s., 
$$\Delta_t(\epsilon) = \left|\epsilon^{1/\alpha}\ell(1/\epsilon)H_{\tau_t^{\epsilon}}^{\epsilon} - \kappa^{1/\alpha} K_{\tau_t}\right| \underset{\epsilon\to0}{\longrightarrow}0,$$

\noindent
for each fixed $t\geq0$. Indeed, this would imply that a.s., for any $t_1,\ldots,t_n\geq0$, the vector $(\Delta_{t_1}(\epsilon), \ldots,\Delta_{t_n}(\epsilon))$ converges to $0$ which implies the convergence in finite dimensional distribution. Moreover, setting $S_t^{(\alpha)} = \sigma_{\alpha}^{-1}\kappa^{1/\alpha} K_{\tau_t}$, Lemmas \ref{stable0_1} and \ref{stable1_2} tell us that $(S_t^{(\alpha)})_{t\geq0}$ is a stable process such that $$\mathbb{E}[\exp(i\xi S_t^{(\alpha)})] =  \exp\left(-c_{\alpha,a,b}t|\sigma_{\alpha}^{-1}\kappa^{1/\alpha}\xi|^{\alpha}\left(1-i\beta_{\alpha, a, b}\tan\left(\frac{\alpha\pi}{2}\right) \mathrm{sgn}(\xi)\right)\right) =  \exp(-t|\xi|^{\alpha}z_{\alpha}(\xi)),$$

\noindent
where $z_{\alpha}(\xi)) = 1-i\beta_{\alpha, a, b}\tan\left(\frac{\alpha\pi}{2}\right) \mathrm{sgn}(\xi)$ as in the statement.

\medskip
We have
$$\Delta_t(\epsilon) \leq \sup_{s\in[0,T]}\left|\epsilon^{1/\alpha}\ell(1/\epsilon)H_s^{\epsilon} - \kappa^{1/\alpha} K_s\right| + \kappa^{1/\alpha}\left|K_{\tau_t^{\epsilon}} - K_{\tau_t}\right|.$$

\noindent
The first term goes to $0$ from Lemma \ref{lemme2}-(i) and the second by continuity of $(K_t)_{t\geq0}$.

\bigskip\noindent
\textbf{Item (iv)}. We want to show that with $(\xi_{\epsilon})_{\epsilon> 0}$ defined in Lemma \ref{lemme2}-2,
$$\left(\epsilon \ell(1/\epsilon) \int_0^{t/\epsilon}f(X_s)\dr s - \xi_{\epsilon} t\right)_{t\geq0} \overset{f.d.}{\longrightarrow} \left(\sigma_{1}S_t^{(1)}\right)_{t\geq0}.$$

\noindent
Once again, by \eqref{equality_distrib}, it is enough to show that
$$\Delta_t(\epsilon) = \left|\epsilon \ell(1/\epsilon) H_{\tau_t^{\epsilon}}^{\epsilon} - \xi_{\epsilon}t - \kappa K_{\tau_t}\right| \underset{\epsilon\to0}{\longrightarrow}0,$$

\noindent
for each fixed $t\geq 0$. Indeed, setting $S_t^{(1)} = \sigma_{1}^{-1}\kappa K_{\tau_t}$, Lemma \ref{stable_1} tells us that $(S_t^{(1)})_{t\geq0}$ is a stable process such that
\begin{align*}
    \mathbb{E}[\exp(i\xi S_t^{(1)})] & = \exp\left(-c_{a,b}t|\sigma_{1}^{-1}\kappa\xi|\left(1+i\beta_{a, b}\frac{2}{\pi}\log(\sigma_{1}^{-1}\kappa|\xi|) \mathrm{sgn}(\xi)\right) + it\tau_{a,b}\sigma_{1}^{-1}\kappa\xi\right) \\
     & = \exp\left(-t|\xi|\left(1+i\beta_{a, b}\frac{2}{\pi}\log(\sigma_{1}^{-1}\kappa|\xi|) \mathrm{sgn}(\xi) - i \tau_{a,b}\sigma_{1}^{-1}\kappa\,\mathrm{sgn}(\xi)\right)\right) \\
     & = \exp(-t|\xi|z_{1}(\xi)),
\end{align*}

\noindent
where $z_{1}(\xi) = 1+i\beta_{a, b}\frac{2}{\pi}\log(\sigma_{1}^{-1}\kappa|\xi|) \mathrm{sgn}(\xi) - i \tau_{a,b}\sigma_{1}^{-1}\kappa\,\mathrm{sgn}(\xi)$ as in the statement.

\medskip
We write
\begin{equation*}
    \begin{split}
        \Delta_t(\epsilon) & \leq \left|\epsilon \ell(1/\epsilon) H_{\tau_t^{\epsilon}}^{\epsilon} - \xi_{\epsilon}L_{\tau_t^{\epsilon}}^0 - \kappa K_{\tau_t^{\epsilon}}\right|
        + \xi_{\epsilon}\left|A_{\tau_t^{\epsilon}}^{\epsilon} - L_{\tau_t^{\epsilon}}^0\right| + \kappa\left|K_{\tau_t^{\epsilon}} - K_{\tau_t}\right|\\
         & \leq \sup_{t\in[0,T]}\left|\epsilon \ell(1/\epsilon) H_t^{\epsilon} - \xi_{\epsilon}L_t^0 - \kappa K_t\right| + \sup_{t\in[0,T]}\xi_{\epsilon}\left|A_t^{\epsilon} - L_t^0\right|+ \kappa\left|K_{\tau_t^{\epsilon}} - K_{\tau_t}\right|.
    \end{split}
\end{equation*}

\noindent
The first term goes to $0$ from Lemma \ref{lemme2}-(ii) and the last one by continuity of $(K_t)_{t\geq0}$. Now remember that $\xi_{\epsilon} = \kappa \ell(1/\epsilon)\int_{-\kappa /\epsilon}^{\kappa /\epsilon}\phi(x)\dr x = \gamma(1/\epsilon)$ where $\gamma(x) = \kappa \ell(x)\int_{-\kappa x}^{\kappa x}\phi(v)\dr v$. Using the notations of Lemma \ref{prop_slow_var} and the asymptotic estimate of $\xi_{\epsilon}$, see Lemma \ref{lemme2}-(ii), we have
$$\gamma(x)\underset{x\to\infty}{\sim}-\kappa(f_+ + f_-)l(x)N(\kappa x)\quad \text{if $f\in\mathrm{\bm{L}}^1(\mu)$},$$

\noindent
and
$$\gamma(x)\underset{x\to\infty}{\sim}\kappa(f_+ + f_-)l(x)L(\kappa x) \quad \text{if $f\notin\mathrm{\bm{L}}^1(\mu)$}.$$

\noindent
In any case, $\gamma$ is equivalent to the product of of two slowly varying functions, and thus, is a slowly varying function. Hence, Lemma \ref{lemma_time_change}-(iii) tells us that the second term goes to $0$.
\end{proof}

\subsection{The diffusive regime and critical diffusive regime}\label{diffusive_regime}

This case is standard and we use the strategy explained in the introduction. It consists in solving the Poisson equation $\mathcal{L}g = f$, see Jacod-Shiryaev \cite[Chapter VIII section 3f]{jacod2013limit}. We recall the function $g$ defined by
$$g(x) = 2\int_0^x \sca'(v)\int_v^{\infty}f(u)[\sigma^{2}(u)\sca'(u)]^{-1}\dr u\dr v,$$

\noindent
which is a $C^2$ function solving $2bg' + \sigma^2 g'' = -2f$. Also since $(X_t)_{t\geq0}$ is a regular positive recurrent diffusion by Assumption \ref{assump_b_sigma}, we classically deduce that $X_t$ tends in law to $\mu$ as $t$ goes to infinity, see Kallenberg \cite[Theorem 23.15]{kallenberg2006foundations}. We first show Theorem \ref{general_theorem}.

\begin{proof}[Proof of Theorem \ref{general_theorem}.]
Using Itô formula with the function $g$, we have
$$\int_0^t f(X_s)\dr s = \int_0^t g'(X_s)\sigma(X_s)\dr B_s - g(X_t).$$

\noindent
Thus $\epsilon^{1/2}\int_0^{t/\epsilon}f(X_s)\dr s = M_t^{\epsilon} - Y_t^{\epsilon}$ where $M_t^{\epsilon} = \epsilon^{1/2}\int_0^{t/\epsilon}g'(X_s)\sigma(X_s)\dr B_s$ is a local martingale and $Y_t^{\epsilon}  =\epsilon^{1/2}g(X_{t/\epsilon})$. Since $g$ is continuous, $g(X_{t/\epsilon})$ converges in law as $\epsilon\to0$ and thus $Y_t^{\epsilon}$ converges to $0$ in probability, for each fixed $t$. Moreover if $g$ is bounded, we have a.s. $\sup_{t\in[0,T]}Y_t^{\epsilon}\underset{\epsilon\to0}{\longrightarrow}0$.

\medskip
We now show that $(M_t^{\epsilon})_{t\geq0}$ converges in in law as a continuous process to $(\gamma W_t)_{t\geq0}$. It is enough (see Jacod-Shiryaev \cite[Theorem VII-3.11 page 473]{jacod2013limit}) to show that for each $t\geq0$, $\langle M^{\epsilon}\rangle_t \overset{\mathbb{P}}{\to} \gamma^2 t$ as $\epsilon\to0$. But $\langle M^{\epsilon}\rangle_t = \epsilon\int_0^{t/\epsilon}[g'(X_s)\sigma(X_s)]^2 \dr s$ and the result follows from Remark \ref{ergodic_version} since $g'\sigma\in\mathrm{\bm{L}}^2(\mu)$ by assumption.

\medskip
All in all we have proved that $(\epsilon^{1/2}\int_0^{t/\epsilon}f(X_s)\dr s)_{t\geq0}$ converges in finite dimensional distributions to $(\gamma W_t)_{t\geq0}$ and as a continuous process if $g$ is bounded.
\end{proof}

\begin{proof}[Proof of Theorem \ref{main_theorem}-(i)-(ii).] The first point is covered by Theorem \ref{general_theorem} since $g'\sigma\in\mathrm{\bm{L}}^2(\mu)$ by Proposition \ref{g'sigma}. The diffusive constant $\sigma_{\alpha}^2 = \int_{\mathbb{R}}[g'\sigma]^2\dr \mu$ is indeed the one specified in the introduction. Remember that $g'(x) = 2\sca'(x)\int_x^{\infty}f(v)[\sigma^2(v)\sca'(v)]^{-1}\dr v$ and $\mu(\dr x) = \kappa[\sigma^2(x)\sca'(x)]^{-1}\dr x$ so that in the end
$$\sigma_{\alpha}^2 = 4\kappa\int_{\mathbb{R}}\sca'(x)\left(\int_x^{\infty}f(v)[\sigma^{2}(v)\sca'(v)]^{-1}\dr v\right)^2 \dr x.$$

In the case $\alpha = 2$ and $\rho=\infty$, we can use exactly the same proof: we write again that $|\epsilon/\rho_{\epsilon}|^{1/2}\int_0^{t/\epsilon}f(X_s)\dr s = M_t^{\epsilon} - Y_t^{\epsilon}$, where $M_t^{\epsilon} = |\epsilon/\rho_{\epsilon}|^{1/2}\int_0^{t/\epsilon}g'(X_s)\sigma(X_s)\dr B_s$ is a local martingale and $Y_t^{\epsilon}  =|\epsilon/\rho_{\epsilon}|^{1/2}g(X_{t/\epsilon})$. First $Y_t^{\epsilon}$ tends to $0$ in probability for each $t$ fixed, as previously, and since $|\epsilon/\rho_{\epsilon}|\to 0$. Finally it only remains to show that for each $t\geq0$,
$$\langle M^{\epsilon}\rangle_t = |\epsilon/\rho_{\epsilon}|\int_0^{t/\epsilon}[g'(X_s)\sigma(X_s)]^2 \dr s \overset{\mathbb{P}}{\longrightarrow}\sigma_2^2,$$

\noindent
which is the case by Remark \ref{critical_diffusive_remark}.
\end{proof}

\begin{remark}
We stress that it is plausible we can show the limit theorem in the L\'evy regime, at least when $f_+ = -f_-$, using the martingale strategy, i.e. writing $\int_0^{t}f(X_s)\dr s = M_t - g(X_t)$ where $(M_t)_{t\geq0}$ is a local martingale with bracket $\int_0^{t}[g'(X_s)\sigma(X_s)]^2\dr s$. Using similar arguments as in Lemma \ref{lemme2}, we can show that, correctly rescaled, this bracket behaves like a stable subordinator. Hence $M_t$ might behave like a Brownian motion subordinate by a stable subordinator, which is known to be a stable process. 
\end{remark}

\section{Applications}\label{section_applications}

Here we give some examples of application.
Each time, we try to explain intuitively why the resulting limiting stable process is not a Brownian motion.
It can be either because (a) $f$ is large near infinity even if the diffusion \eqref{SDE} has small 
return times to $0$ (as studied by Jara-Komorowski-Olla \cite{ollamilton} in the case of Markov chains), 
or (b) the diffusion \eqref{SDE} has large return times to $0$, meaning that its invariant
distribution has a rather slow decay; (c) or for both reasons.

\medskip
In case (a), and as we will see only in case (a), one easily determines the index $\alpha$ 
of the limiting stable process
from the behavior of $\mu(\{f>x\})$ as $x \to \pm \infty$; namely, $\mu(\{f>x\})\sim |x|^{-\alpha}$,
up to a constant or a slowly varying function. 

\medskip\noindent
\textbf{Diffusions with fast decay invariant measure :} Consider the following stochastic differential equation
\begin{equation}
    X_t = -\frac{\theta + 1}{2}\int_0^t \mathrm{sgn}(X_s)|X_s|^{\theta}\dr s + B_t,
\end{equation}

\noindent
where $\theta > 0$ and $(B_t)_{t\geq0}$ is a Brownian motion. This model includes the Ornstein-Uhlenbeck process ($\theta = 1$). Following equations \eqref{scale_function} and \eqref{speed_measure}, the invariant measure and the scale function are given by
$$\mu(\dr x) = \kappa e^{-|x|^{\theta + 1}}\dr x, \qquad \sca(x) = \int_0^x e^{|v|^{\theta + 1}}\dr v.$$

\noindent
One can check that $\sca(x) \underset{x\to\pm\infty}{\sim}\mathrm{sgn}(x)e^{|x|^{\theta + 1}} / [(\theta  + 1) |x|^{\theta}]$. Hence, as soon as $f$ is a continuous function such that there exists $\alpha > 0$ and $(f_+, f_-)\in\mathbb{R}^2$ satisfying
$$|x|^{(1/\alpha - 2)\theta}e^{- |x|^{\theta + 1} / \alpha}f(x) \underset{x\to\pm\infty}{\longrightarrow}f_{\pm},$$

\noindent
and $|f_+| + |f_-| > 0$, we can apply Theorem \ref{main_theorem} with $\ell \equiv 1$, provided $\int_0^{t/\epsilon}f(X_s)\dr s$ is replaced by $\int_0^{t/\epsilon}[f(X_s) - \mu(f)]\dr s$ when $\alpha > 1$. We can also generalize using slowly varying functions.

\medskip
Roughly, the return times of $(X_t)_{t\geq0}$ have exponential moments, 
because its invariant distribution has an exponential decay. To get an
$\alpha$-stable process at the limit with some $\alpha \in (0,2)$, the function $f$
really has to be large near infinity. Namely, we need that
there exists a slowly varying function $L$ and $c_+, c_- > 0$ such that
\begin{equation}\label{discrete_condition}
    L(x)|x|^{\alpha}\mu(\{f>x\})\underset{x\to\infty}{\longrightarrow}c_+\quad\text{and}\quad L(x)|x|^{\alpha}\mu(\{f<x\})\underset{x\to-\infty}{\longrightarrow}c_+.
\end{equation}

\medskip\noindent
\textbf{A toy kinetic model :} Here we slightly generalize the results of \cite{nasreddine2015diffusion}, \cite{cattiaux2019diffusion}, \cite{lebeau2019diffusion} and \cite{fournier2018one}. Consider a one-dimensional particle with position $X_t\in\mathbb{R}$ and velocity $V_t\in\mathbb{R}$ subject to random shocks and a force field $F(v) = \frac{\beta}{2}\frac{\Theta'(v)}{\Theta(v)}$ for some $\beta > 1$ and a function $\Theta : \mathbb{R}\mapsto(0,+\infty)$ of class $C^1$. The Newton equations describing the motion of the particle are
\begin{equation}\label{kinetic_model_2}
V_t = B_t +\int_0^t F(V_s)\dr s, \qquad X_t = \int_0^t V_s \dr s,
\end{equation}

\noindent
where $(B_t)_{t\geq0}$ is a Brownian motion modeling the random shocks. The invariant measure and the scale function are given by
$$\mu(\dr x) = \kappa[\Theta(x)]^{\beta}\dr x, \qquad \sca(x) = \int_0^x [\Theta(v)]^{-\beta}\dr v.$$

\noindent
We make the following assumption on the function $\Theta$ :

\begin{assumption}\label{assumption_theta}
There exists $(c_-, c_+)\in\mathbb{R}_+^2$ such that $|x|\Theta(x) \longrightarrow c_{\pm}$ as $x\to\pm\infty$ and $c_+ + c_- > 0$.  If $c_+ =0$ (respectively $c_- = 0$), we moreover impose there exists $A > 0$ such that for all $x\geq A$, $\Theta'(x) < 0$ (respectively for all $x\leq -A$, $\Theta'(x) > 0$).
\end{assumption}

\noindent
Obviously, $\lim_{x\to\pm\infty}\sca(x) = \pm\infty$ and since $\beta > 1$, $\mu(\mathbb{R})<\infty$. Now take $f =\text{id}$, we want to find $\alpha > 0$ such that Assumption \ref{assump_f} holds. Let us show that, with $\alpha = (\beta + 1) / 3$, we have 
\begin{align}\label{ttt}
    |\sca'(x)|^{-2}|\sca(x)|^{2-1/\alpha}x \underset{x\to\infty}{\longrightarrow}(\beta  + 1)^{1/\alpha - 2}c_{+}^{\beta/\alpha}, \\
    |\sca'(x)|^{-2}|\sca(x)|^{2-1/\alpha}x \underset{x\to-\infty}{\longrightarrow}-(\beta  + 1)^{1/\alpha - 2}c_{-}^{\beta/\alpha}.\label{yyy}
\end{align}

\medskip\noindent
Observe that we have the following estimations
\begin{equation}\label{kinetic_asympt}
    |x|^{\beta}|\sca'(x)|^{-1}\underset{x\to\pm\infty}{\longrightarrow}c_{\pm}^{\beta}\quad\text{and}\quad|x|^{\beta + 1}|\sca(x)|^{-1}\underset{x\to\pm\infty}{\longrightarrow}(\beta + 1)c_{\pm}^{\beta}.
\end{equation}

\medskip\noindent
Hence if $c_+ > 0$ and $c_- > 0$, the result is immediate. We consider e.g. the case $c_+=0$. First we notice that $\sca''(x) = -\beta\Theta'(x)[\Theta(x)]^{-\beta - 1}$ and thus Assumption \ref{assumption_theta} tells us that there exists $A > 0$, such that $\sca'$ is increasing on $[A,\infty)$. Hence for all $x\geq A$, we have 
$$0\leq\sca(x) \leq \sca(A) + \sca'(x)(x-A)\leq \sca(A) + \sca'(x)x,$$

\noindent
and thus for all $x\geq A$, since $2-1/\alpha > 0$ (because $\beta >1$),
$$|\sca'(x)|^{-2}|\sca(x)|^{2-1/\alpha}x \leq |\sca'(x)|^{-2}|\sca(A) + \sca'(x)x|^{2-1/\alpha}x \underset{x\to\infty}{\sim}|\sca'(x)|^{-1/\alpha}|x|^{3-1/\alpha}$$

\noindent
But since $\alpha = (\beta + 1) / 3$, we have $3-1/\alpha = \beta / \alpha$ and thus $|\sca'(x)|^{-1/\alpha}|x|^{3-1/\alpha} = |x^{\beta}[\sca'(x)]^{-1}|^{1/\alpha}$ which goes to $0$ by \eqref{kinetic_asympt}.

\medskip
We can apply Theorem \ref{main_theorem}: with $\alpha = (\beta + 1) / 3$, $m=\mu(\text{id})$, $\ell\equiv1$, $f_+ = (\beta  + 1)^{1/\alpha - 2}c_{+}^{\beta/\alpha}$ and $f_- = -(\beta  + 1)^{1/\alpha - 2}c_{-}^{\beta/\alpha}$,

\begin{enumerate}[label=(\roman*)]
    \item If $\beta > 5$, $\left(\epsilon^{1/2}(X_{t/\epsilon} - mt/\epsilon)\right)_{t\geq0} \overset{f.d.}{\longrightarrow} \left(\sigma_{\alpha}W_t\right)_{t\geq0}.$
    
    \item If $\beta = 5$, $\left(|\epsilon/\log\epsilon|^{1/2}(X_{t/\epsilon} - mt/\epsilon)\right)_{t\geq0} \overset{f.d.}{\longrightarrow} \left(\sigma_{2}W_t\right)_{t\geq0}.$
    
    \item If $\beta\in(2,5)$, $\left(\epsilon^{1/\alpha}(X_{t/\epsilon} - mt/\epsilon)\right)_{t\geq0} \overset{f.d.}{\longrightarrow} \left(\sigma_{\alpha}S_t^{(\alpha)}\right)_{t\geq0} .$
    
    \item If $\beta = 2$, $\left(\epsilon (X_{t/\epsilon} - \xi_{\epsilon} t/\epsilon)\right)_{t\geq0} \overset{f.d.}{\longrightarrow} \left(\sigma_{\alpha}S_t^{(\alpha)}\right)_{t\geq0} .$
    
    \item If $\beta \in (1,2)$, $\left(\epsilon^{1/\alpha}X_{t/\epsilon}\right)_{t\geq0} \overset{f.d.}{\longrightarrow} \left(\sigma_{\alpha}S_t^{(\alpha)}\right)_{t\geq0} .$
\end{enumerate}

Remark that $\alpha$ ranges from $2/3$ to infinity. The additional assumption that $\Theta$ is eventually monotonic when $c_+ = 0$ (or $c_- = 0$) is not optimal, the true assumption is \eqref{ttt} and \eqref{yyy}.

\medskip
We stress that this model is the starting point of this paper as it was explained in the references section. Considering the \textit{symmetric} function $\Theta(v) = (1+v^2)^{-1/2}$ and reasoning on the law of $(X_t, V_t)$, which is the solution of the associated kinetic Fokker-Planck equation, a series of P.D.E papers (Nasreddine-Puel \cite{nasreddine2015diffusion} : $\beta > 5$, Cattiaux-Nasreddine-Puel \cite{cattiaux2019diffusion} : $\beta = 5$ and Lebeau-Puel \cite{lebeau2019diffusion} : $\beta \in (1, 5)\setminus\{2, 3, 4\}$) answered these questions, finding some symmetric stable processes at the limit. Then, using probabilistic techniques, Fournier and Tardif \cite{fournier2018one} treated all the cases (even when $\beta \in (0,1]$), still in a symmetric context. We thus recover their results and extend them to asymmetrical forces.

\medskip
When $\beta\in(0,1)$, the process $(V_t)_{t\geq0}$ is null recurrent and Fournier and Tardif \cite{fournier2018one} show the rescaled process $(\epsilon^{1/2}V_{t/\epsilon})_{t\geq0}$ converges in finite dimensional distribution to a symmetrized Bessel process and $(\epsilon^{3/2}X_{t/\epsilon})_{t\geq0}$ to an integrated symmetrised Bessel process (their proof actually holds for $\beta\in(-1,1)$). Although we can extend their result to asymmetrical forces, this phenomenon seems specific to this equation and we were not able to extend it to general diffusions.

\medskip
Remark that for this model, the return times do not have exponential moments, because 
the invariant distribution has a slow decay. 
The exponent $\alpha$ is not prescribed by the invariant measure. Indeed we have
$$|x|^{\beta - 1}\mu(\{f>x\})\underset{x\to\infty}{\longrightarrow}\kappa c_+^{\beta}/(\beta-1)\quad \text{and} \quad|x|^{\beta - 1}\mu(\{f<x\})\underset{x\to\infty}{\longrightarrow}\kappa c_-^{\beta}/(\beta-1)$$

\noindent
and $\alpha = (\beta + 1) / 3$, while one might expect $\alpha = \beta -1$. Observe that if $\beta\in(1,2)$, we have $ (\beta + 1) / 3 > \beta - 1$ and if $\beta > 2$, we have  $ (\beta + 1) / 3 < \beta - 1$. The dynamics are a little bit more complicated and one must take into account the fact that the return times can be long.

\bigskip\noindent
\textbf{SDE without drift :} Consider the following SDE
\begin{equation}
    X_t = \int_0^t \left(1 + |X_s|\right)^{\beta / 2}\dr B_s,
\end{equation}

\noindent
with $\beta > 1$. We have $\sca(x) = x$ (indeed $(X_t)_{t\geq0}$ is a martingale) and $\mu(\dr x) = \kappa(1+|x|)^{-\beta}\dr x$. Set for instance $f(x) = x / (1+|x|)^{1-\gamma}$ with $\gamma > \beta - 2$, then $\mu(f) = 0$ and one can see that, with $\alpha = 1 / (\gamma + 2 - \beta) > 0$, $f_+ = 1$ and $f_- = -1$, we have
$$|x|^{2-1/\alpha - \beta}f(x) \underset{x\to\pm\infty}{\longrightarrow}f_{\pm}.$$

\noindent
Hence we can apply Theorem \ref{main_theorem} with $\ell\equiv1$.

\begin{enumerate}[label=(\roman*)]
    \item If $\beta - 2 < \gamma < \beta - 3/2$, then $\left(\epsilon^{1/2}\int_0^{t/\epsilon}f(X_s)\dr s\right)_{t\geq0} \overset{f.d.}{\longrightarrow} \left(\sigma_{\alpha}W_t\right)_{t\geq0}.$
    
    \item If $\gamma = \beta - 3/2$, then $\left(|\epsilon/\log\epsilon|^{1/2}\int_0^{t/\epsilon}f(X_s)\dr s\right)_{t\geq0} \overset{f.d.}{\longrightarrow} \left(\sigma_{2}W_t\right)_{t\geq0}.$
    
    \item If $\beta - 3/2 < \gamma < \beta -1$, then $\left(\epsilon^{1/\alpha}\int_0^{t/\epsilon}f(X_s)\dr s\right)_{t\geq0} \overset{f.d.}{\longrightarrow} \left(\sigma_{\alpha}S_t^{(\alpha)}\right)_{t\geq0} .$
    
    \item If $\gamma = \beta - 1$, then $\left(\epsilon\int_0^{t/\epsilon}f(X_s)\dr s\right)_{t\geq0} \overset{f.d.}{\longrightarrow} \left(\sigma_{\alpha}S_t^{(\alpha)}\right)_{t\geq0} .$
    
    \item If $\gamma > \beta - 1$, then $\left(\epsilon^{1/\alpha}\int_0^{t/\epsilon}f(X_s)\dr s\right)_{t\geq0} \overset{f.d.}{\longrightarrow} \left(\sigma_{\alpha}S_t^{(\alpha)}\right)_{t\geq0} .$
\end{enumerate}

\noindent
Now if $\gamma \leq \beta -2$, one can see that the function $g$, solution of the Poisson equation, is such that
$$g'(x) = \int_x^{\infty}f(v)\sigma^{-2}(v)\dr v \underset{x\to\infty}{\sim}x^{\gamma - \beta + 1}\quad\text{and}\quad g'(x) = \int_{-\infty}^{x}f(v)\sigma^{-2}(v)\dr v \underset{x\to-\infty}{\sim}-|x|^{\gamma - \beta + 1}.$$
\noindent
Hence $g'\in\mathrm{\bm{L}}^2(\dr x)$ since $2(\gamma - \beta + 1) < -1$ and thus $g'\sigma\in\mathrm{\bm{L}}^2(\mu)$ (since $\mu(\dr x) = \sigma^{-2}(x)\dr x$). We can apply Theorem \ref{general_theorem} which tells us that the diffusive regime holds for $\gamma\leq \beta - 2$.

\medskip
Once again, the return times are slow and the index $\alpha$ is not entirely determined
by the invariant measure.
Indeed we have for $\gamma > 0$
$$\mu(\{f>x\})\underset{x\to\infty}{\sim}C|x|^{(1-\beta) / \gamma} \quad \text{and} \quad \mu(\{f<x\})\underset{x\to-\infty}{\sim}C|x|^{(1-\beta) / \gamma}$$
\noindent
for some constant $C$.

\medskip
Assume $\beta = 5/4$ and let $f$ be a function going to $0$ at $\pm\infty$ such that $\mu(f)\neq0$. If $\alpha = 4/3$, we have
$$|x|^{2-1/\alpha - \beta}(f(x)-\mu(f)) \underset{x\to\pm\infty}{\longrightarrow}-\mu(f),$$

\noindent
and thus we can apply Theorem \ref{main_theorem}-(iii) which tells us that
$$\left(\epsilon^{3/4}\int_0^{t/\epsilon}[f(X_s)-\mu(f)]\dr s\right)_{t\geq0} \overset{f.d.}{\longrightarrow} \left(\sigma_{4/3}S_t^{(4/3)}\right)_{t\geq0} .$$

\bibliographystyle{plain}
\bibliography{refs.bib}

\end{document}